\theoremstyle{plain}
\newtheorem{theorem}[subsubsection]{Theorem}
\newtheorem{proposition}[subsubsection]{Proposition}
\newtheorem{lemma}[subsubsection]{Lemma}
\newtheorem{corollary}[subsubsection]{Corollary}
\theoremstyle{definition}
\newtheorem{definition}[subsubsection]{Definition}
\newtheorem{example}[subsubsection]{Example}
\newtheorem{remark}[subsubsection]{Remark}
\newtheorem{Theorem}[subsection]{Theorem}
\newtheorem{Proposition}[subsection]{Proposition}
\newtheorem{Lemma}[subsection]{Lemma}
\newtheorem{Corollary}[subsection]{Corollary}
\theoremstyle{definition}
\newtheorem{Remark}[subsection]{Remark}
\numberwithin{equation}{subsubsection}
\numberwithin{equation}{subsubsection}
\def\@tocline#1#2#3#4#5#6#7{\relax
  \ifnum #1>\c@tocdepth 
  \else
    \par \addpenalty\@secpenalty\addvspace{#2}%
    \begingroup \hyphenpenalty\@M
    \@ifempty{#4}{%
      \@tempdima\csname r@tocindent\number#1\endcsname\relax
    }{%
      \@tempdima#4\relax
    }%
    \parindent\z@ \leftskip#3\relax \advance\leftskip\@tempdima\relax
    \rightskip\@pnumwidth plus4em \parfillskip-\@pnumwidth
    #5\leavevmode\hskip-\@tempdima
      \ifcase #1
       \or\or \hskip 1em \or \hskip 2em \else \hskip 3em \fi%
      #6\nobreak\relax
    \dotfill\hbox to\@pnumwidth{\@tocpagenum{#7}}\par
    \nobreak
    \endgroup
  \fi}
\title{Frobenius and Valuation Rings}
\author{Rankeya Datta and Karen E. Smith}
\email{rankeya@umich.edu, kesmith@umich.edu}
\thanks{The first author was partially supported by the Juha Heinonen Memorial Graduate Fellowship at the University of Michigan. The second author was partially supported by NSF grants DMS-1001764 and DMS-1501625.}
\begin{document}

\maketitle

\begin{abstract} The behavior of the Frobenius map is investigated for valuation rings of prime characteristic. We show that valuation rings are always F-pure. We introduce a generalization of the notion of strong F-regularity, which we call F-pure regularity, and show that a valuation ring is F-pure regular if and only if it is Noetherian. For valuations on function fields, we show that 
the Frobenius map is finite if and only if the valuation is Abhyankar; in this case the valuation ring is Frobenius split. 
For Noetherian valuation rings in function fields, we show that the valuation ring is Frobenius split if and only if Frobenius is finite, or equivalently, if and only if the valuation ring is excellent. 
\end{abstract}

\renewcommand{\baselinestretch}{0.75}\normalsize
\tableofcontents
\renewcommand{\baselinestretch}{1.0}\normalsize

\addtocontents{toc}{~\hfill\textbf{Page}\par}


\section{Introduction}

Classes of singularities defined using Frobenius---F-purity, Frobenius splitting, and the various variants of F-regularity---have  played a central role in commutative algebra and algebraic geometry over the past forty years.  
The goal of this paper is a systematic study of these {F-singularities} in the novel, but increasingly important {\it non-Noetherian} setting of {\bf valuation rings.}

{}

Let $R$ be a  commutative ring of prime characteristic $p$. The Frobenius map is the ring homomorphism $R \overset{F}\to R $ sending each element to its $p$-th power. While simple enough, the Frobenius map reveals deep structural properties of a Noetherian  ring of prime characteristic, and is a powerful tool for proving theorems for rings containing an arbitrary  field (or varieties, say,  over $\mathbb C$)  by standard reduction to characteristic $p$ techniques. Theories such as  Frobenius splitting \cite{MR} and tight closure \cite{HH2} are well-developed in the Noetherian setting, often under the additional assumption that the Frobenius map is finite. Since classically most motivating problems were inspired by  algebraic geometry and representation theory, these assumptions seemed  natural and not very restrictive. 
Now, however,  good reasons are emerging to study F-singularities in certain  non-Noetherian settings  as well.

{}

One such setting is cluster algebras \cite{FoZe}. An upper cluster algebra over $\mathbb F_p$ need not be Noetherian, but  recently it was shown that it is always Frobenius split, and indeed, admits a ``cluster canonical'' Frobenius splitting \cite{BeMuRaSm}.
Likewise  valuation rings are  enjoying a resurgence of popularity despite rarely being Noetherian, with  renewed interest in non-Archimedean geometry \cite{Conrad},  the development of  tropical  geometry \cite{GRW}, and the valuative tree \cite{FavJon}, to name just a few examples, as well as fresh uses in higher dimensional birational geometry (e.g. \cite{Cut, BobPe, Bouck}).  

{}

For a Noetherian ring $R$, the Frobenius map is flat if and only if $R$ is regular, by a famous theorem of Kunz \cite{Kunz}. As we observe in Theorem \ref{flatness and faith flatness of Frobenius for Valuation rings}, the Frobenius map is always flat for a valuation ring. So in some sense, a valuation ring of characteristic $p$ might be interpreted as a ``non-Noetherian regular ring.''

{}

On the other hand, some valuation rings are decidedly more like the local rings of smooth points on varieties than others. For example, for a  variety $X$ (over, say, an algebraically closed field of characteristic $p$), the  Frobenius map  is always finite. For 
 valuation rings of the function field of $X$, however,  we show that the Frobenius is finite if and only the valuation is Abhyankar; see
     Theorem \ref{F-finiteness equivalent to Abhyankar}.  In particular, for discrete valuations, finiteness of Frobenius is equivalent to the valuation being {\it divisorial}---that is, given by  the order of vanishing along a prime divisor on some birational model. 
Abhyankar valuations might be considered the geometrically most interesting ones (C.f. \cite{SLE}), so it is fitting that their valuation rings behave the most like the rings of smooth points on a variety.
 Indeed, recently, the local uniformization problem for Abhyankar valuations was settled in positive  characteristic \cite{Knaf-Kuhl}.

{}

One can weaken the demand that Frobenius is flat and instead require only that the Frobenius map is {\it pure} (see \S \ref{F-purity and Frobenius splitting}).  Hochster and Roberts observed that this condition, which they dubbed {\it F-purity}, is  often sufficient for controlling singularities of a Noetherian local ring, an observation at the heart of  their famous  theorem on the Cohen-Macaulayness of invariant rings \cite{HR1, HR2}.  We show in Corollary \ref{Frobenius is pure} that any valuation ring of characteristic $p$ is F-pure. 
Purity of a map is equivalent to its splitting under suitable finiteness hypotheses, but at least for valuation rings (which rarely satisfy said hypotheses), 
the purity of Frobenius seems to be better behaved and more straightforward than its splitting. Example \ref{A discrete valuation ring which is not F-finite} shows that not all valuation rings are Frobenius split, even in the Noetherian case.

{}

Frobenius splitting   has well known deep local and  global  consequences for algebraic varieties.  In the local case, Frobenius splitting has been said to be  a ``characteristic $p$ analog''  of  log canonical singularities for complex varieties, whereas related properties  correspond to other singularities in the minimal model program \cite{Hara-Wata, Schw3, Smi4,  Tagaki2}. For projective varieties, Frobenius splitting is  related to positivity of the anticanonical bundle; see  \cite{BrKu, MR, Smi2, SmSch}.   
Although valuation rings are always F-pure, the question of their
Frobenius splitting is subtle.
Abhyankar valuations in function fields  are Frobenius split (Theorem \ref{F-finiteness equivalent to Abhyankar}),  but  a discrete valuation ring is Frobenius split if and only if it is {\it excellent} in the sense of Grothendieck  (Theorem \ref{DVRFsplitChar}). 
Along the way, we prove a simple characterization of the finiteness of Frobenius for a Noetherian domain in terms of excellence, which  gives a large class of Noetherian domains in which Frobenius splitting implies excellence; see \S \ref{F-finiteExcellence} for details.

{}

Closely related to F-purity and Frobenius splitting are the various variants of F-regularity. Strong F-regularity was introduced by Hochster and Huneke \cite{HH1} as a proxy for weak F-regularity---the property that all ideals are tightly closed---because it is easily shown to pass to localizations. Whether or not  a weakly F-regular ring remains so after localization is a long standing open question in tight closure theory, as is the equivalence of  weak F-regularity and strong F-regularity.  Strong F-regularity has found many applications beyond tight closure, and is closely related to Ramanathan's notion of  ``Frobenius split along a divisor"  \cite{Ram, Smi2}. A smattering of applications might include \cite{ AbLeus, BeMuRaSm, Blickle, BrKu,  GLPSTZ,  hacon,  Patak1,  ST1,  Schw2,  SmSch, SmVan,  Smith-Zhang, Smi2}.

{}

Traditionally, strong F-regularity has been defined only for Noetherian  rings in which Frobenius is finite. 
To clarify the situation for valuation rings, we introduce a new definition which we call {\it  F-pure regularity} (see Definition \ref{pure element and pure F-regularity})  requiring {\it purity} rather than {\it splitting} of certain maps. We show that F-pure regularity is better suited for arbitrary rings, but equivalent to strong F-regularity under the standard finiteness hypotheses; it also agrees with  	
another  generalization of strong F-regularity proposed by Hochster (using tight closure) in the local  Noetherian case \cite{Hoc2}.   Likewise, we show  that F-pure regularity is a  natural and straightforward generalization of strong F-regularity, satisfying many expected properties---for example, regular rings are F-pure regular. Returning to valuation rings, in  Theorem \ref{purely F-regular valuation rings} we
  characterize  F-pure regular valuation rings as precisely those that are  Noetherian. 
  
 Finally, in \S \ref{split F-regularity for valuations}, we compare our generalization of strong F-regularity with the  obvious competing generalization, in which the standard definition in terms of splitting certain maps is naively extended without assuming any finiteness conditions.  To avoid confusion,{\footnote{An earlier version of this paper used the  terms {\it pure F-regularity} and {\it split F-regularity} for the two generalizations of classical strong F-regularity, depending upon whether maps were required to be pure or split.  The names were changed at Karl Schwede's urging to avoid confusion with terminology for pairs in \cite{Takagi1}.}}  
 we call this {\it{split F-regularity}}. We  characterize split F-regular valuation rings (at least in a certain large class of fields) as precisely those that are Frobenius split, or equivalently {\it excellent; }  see Corollary \ref{splitDVR}.
 But we also point out  that there are regular local rings that fail to be split F-regular, so perhaps split F-regularity is not a reasonable notion of ``singularity.''

{}

The authors gratefully acknowledge  the conversation with  Karl Schwede and Zsolt Patakfalvi that inspired  our  investigation of Frobenius in  valuation rings, and especially Karl, with whom we later had many  fruitful discussions.
The authors also acknowledge helpful discussions with Mel Hochster,  Daniel Hernandez,  Mattias Jonsson,  Eric Canton,  Linquan Ma, and Juan Felipe P\'erez, some of which occurred  at the  American Mathematical Society's  2015 Mathematics Research Community in commutative algebra, attended by the first author. In addition, Karl and Linquan made detailed comments on an earlier version, which greatly improved the paper. In particular, Linquan  suggested Corollary \ref{Linquan}. We also thank the referee for helpful suggestions.

\section{Preliminaries}

Throughout this paper,  all rings are assumed to be commutative, and  of prime characteristic $p$ unless explicitly stated otherwise.
  By a local ring, we mean a ring with a unique maximal ideal, {\it not necessarily Noetherian.}

\subsection{Valuation Rings}

We recall some basic facts and definitions about valuation rings (of arbitrary characteristic), while fixing notation. See \cite[Chapter VI]{Bourbaki} or \cite[Chapter 4]{Mat} for proofs and details. 

{}

  The symbol $\Gamma$ denotes an ordered abelian group. Recall that such an abelian group is torsion free. The rational rank of $\Gamma$, denoted  $\operatorname{rat.rank} \hspace{1mm} \Gamma$, is the dimension of the $\mathbb Q$-vector space $\mathbb Q \otimes_{\mathbb Z} \Gamma$.

{}
 
  Let $K$ be a field.  A \textbf{valuation on $K$} is a homomorphism 
$$v: K^{\times} \rightarrow \Gamma$$ from the group of units of $K$, satisfying 
 $$v(x + y) \geq \min\{v(x), v(y)\}$$ for all $x, y \in K^{\times}$.  
We say that $v$ \textbf{is defined over a subfield $k$ of $K$,} or that $v$ is a  \textbf{valuation on $K/k$,} if $v$ takes the value $0$ on elements of $k$. 
{}

  There is no loss of generality in assuming that $v$ is surjective, in which case we say $\Gamma$ (or $\Gamma_v$) is the \textbf{value group}  of $v$.
Two  valuations $v_1$ and $v_2$ on $K$ are said to be \textbf{equivalent} if  there is an order preserving isomorphism of their value groups identifying $v_1(x)$ and $v_2(x)$ for all $x\in K^{\times}.$ Throughout this paper, we identify equivalent  valuations. 

{}

  The \textbf{valuation ring of $v$} is the subring $R_v \subseteq K$ consisting of all elements $x\in K^{\times}$ such that $v(x)\geq 0$ (together with the zero element of $K$). 
Two valuations on a field $K$  are {equivalent} if and only if they determine the same valuation ring. Hence a valuation ring of $K$  is essentially the same thing as an equivalence class of valuations on $K$.

{}

The valuation ring of $v$ is local, with {\bf maximal ideal $m_v$} consisting of elements of strictly positive values (and zero).  The {\bf residue  field $R_v/m_v$} is denoted $\kappa(v)$. If $v$ is a valuation over $k$, then both $R_v$ and $\kappa(v)$ are $k$-algebras.

 {}
 
  A valuation ring $V$ of $K$ can be characterized directly, without reference to a valuation, as a subring with the property that for every $x \in K$,  either $x \in V$ or $x^{-1} \in V$. The valuation ring $V$  uniquely determines a valuation $v$ on $K$ (up to equivalence), whose valuation ring  in turn recovers $V$.
 Indeed, it is easy to see that the set of ideals of a valuation ring is totally ordered by inclusion, so  the set of principal ideals $\Gamma^+$ forms a monoid under multiplication, ordered by  $(f) \leq (g)$ whenever $f$ {\it divides} $g$. Thus, $\Gamma$ can be taken to be the ordered abelian group generated by the principal ideals, and the valuation  $v: K^{\times} \rightarrow \Gamma$ is  induced by the monoid map sending  each non-zero $x \in V$ to  the ideal generated by $x$. Clearly, the valuation ring of $v$ is $V$. 
See \cite[Chapter 4]{Mat}.

\subsection{Extension of Valuations}
\label{extension}
Consider an extension of fields  $K \subseteq L$. By definition, a valuation $w$ on $L$ is an  \textbf{extension} of a valuation $v$ on $K$ if the restriction of $w$ to the subfield $K$ is $v$. Equivalently, $w$ extends $v$ if \textbf{$R_w$ dominates $R_v$}, meaning that $R_v = R_w\cap K$ with $m_w \cap R_v = m_v$. In this case, there is an induced map of residue fields
$$\kappa(v) \hookrightarrow \kappa(w).$$ 
 
{}
 
  The \textbf{residue degree of $w$ over $v$}, denoted by $f(w/v)$, is the degree of the residue field extension $\kappa(v)\hookrightarrow \kappa(w).$

{}

  If $w$ extends $v$,  there is a natural injection of ordered groups $\Gamma_v \hookrightarrow \Gamma_w$, since $\Gamma_v$ is the image of $w$ restricted to the subset $K$.
The \textbf {ramification index of $w$ over $v$}, denoted by $e(w/v)$, is the index  of  $\Gamma_v$ in   $\Gamma_w$.

{}

If $K \hookrightarrow L$ is a finite extension, then both the ramification index $e(w/v)$ and the residue  degree $f(w/v)$ are {\it finite}.  Indeed, if $K \subseteq L$ is a degree $n$ extension, then 
\begin{equation}\label{ramification/residue for finite extensions is finite} 
e(w/v)f(w/v) \leq n.
\end{equation}

{}

More precisely, we have
\begin{proposition}\cite[VI.8]{Bourbaki}.
\label{key theorem for F-finiteness results}
Let $K \subseteq L$ be an extension of fields of finite degree $n$. For a valuation $v$ on $K$, consider the set $\mathcal S$ of all  extensions (up to equivalence) $w$ of $v$ to $L$.  
 Then 
 $$\sum_{w_i \in \mathcal S} e(w_i/v)f(w_i/v) \leq  n.
 $$ In particular, the set $\mathcal S$ is finite.
Furthermore, equality holds if and only if  the integral closure of $R_v$ in $L$ is a finitely generated $R_v$-module.
\end{proposition}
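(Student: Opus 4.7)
The plan is to combine the weak approximation theorem for independent valuations with the standard identification of $\mathcal S$ with the maximal ideals of the integral closure of $R_v$ in $L$. First, recall (see \cite[Chapter VI]{Bourbaki}) that if $S$ denotes this integral closure, then $S = \bigcap_{w \in \mathcal S} R_w$ and $w \mapsto m_w \cap S$ is a bijection from $\mathcal S$ onto $\mathrm{MaxSpec}(S)$ with $R_w = S_{m_w \cap S}$; in particular distinct extensions have distinct centers on $S$ and so are pairwise independent as valuations on $L$, making weak approximation applicable.

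For the inequality, I fix any finite subset $w_1, \dots, w_s$ of $\mathcal S$ and construct $N := \sum_{i=1}^s e(w_i/v) f(w_i/v)$ elements of $L$ that are $K$-linearly independent; this will force $\mathcal S$ to be finite and the full sum to be at most $n$. For each $i$, I choose $a_{i,1}, \dots, a_{i,f_i} \in R_{w_i}$ whose residues form a $\kappa(v)$-basis of $\kappa(w_i)$, and $b_{i,1}, \dots, b_{i,e_i} \in L^\times$ whose $w_i$-values represent the $e_i$ cosets of $\Gamma_v$ in $\Gamma_{w_i}$. Weak approximation then produces $c_{i,j,k} \in L$ with $w_i(c_{i,j,k} - a_{i,j} b_{i,k}) > w_i(b_{i,k})$ (so $w_i(c_{i,j,k}) = w_i(b_{i,k})$), while $w_{i'}(c_{i,j,k})$ exceeds an explicit bound depending only on the $b$'s for every $i' \ne i$.

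The heart of the argument is verifying $K$-linear independence of the $c_{i,j,k}$. Given a putative relation $\sum \lambda_{i,j,k} c_{i,j,k} = 0$ with $\lambda_{i,j,k} \in K$, rescale so $\min v(\lambda_{i,j,k}) = 0$ and pick any $i_0$ attaining this minimum; then examine the sum under $w_{i_0}$. The cross terms from $i \ne i_0$ contribute very large $w_{i_0}$-value by construction. Among the $i = i_0$ terms, grouping by $k$ and using the $\kappa(v)$-linear independence of the $\bar a_{i_0, j}$ forces the per-$k$ partial sum to have $w_{i_0}$-value exactly $\min_j v(\lambda_{i_0,j,k}) + w_{i_0}(b_{i_0,k})$. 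Because the $w_{i_0}(b_{i_0,k})$ lie in distinct cosets of $\Gamma_v$, these per-$k$ values do as well and cannot cancel, so the total $w_{i_0}$-value is finite, contradicting vanishing.

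For the equality assertion, if $S$ is finitely generated over $R_v$ then $S$ is torsion-free over the valuation ring $R_v$ and hence free, of rank $n$ since $S \otimes_{R_v} K = L$. The quotient $S/m_v S$ is then a $\kappa(v)$-algebra of dimension $n$, semilocal (as $m_v$ lies in the Jacobson radical of any integral extension), decomposing via the Chinese Remainder Theorem as $\prod_i R_{w_i}/m_v R_{w_i}$; the filtration of each factor by the finitely many cosets of $\Gamma_v$ in $\Gamma_{w_i}$, with successive quotients $\kappa(w_i)$, gives dimension $e_i f_i$, whence $\sum e_i f_i = n$. Conversely, if $\sum e_i f_i = n$ then the $c_{i,j,k}$ constructed above form a $K$-basis of $L$, and after sharpening the approximations so they lie in $S$ and generate a free rank-$n$ submodule $S' \subseteq S$, one shows $S' = S$ by a local comparison at each $\mathfrak m_i$ that leverages the tightness of the bound. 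The main obstacle is the valuation bookkeeping in the linear-independence step, together with the converse direction of the equality case, where one must genuinely use that the bound is tight to promote $K$-spanning to $R_v$-module generation.
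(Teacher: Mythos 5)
The central difficulty in your approach is the claim, stated up front, that distinct extensions of $v$ to $L$ have distinct centers on $S$ \emph{and so} are pairwise independent, making weak approximation applicable. Having distinct centers only shows the valuation rings $R_{w_i}$ are pairwise \emph{incomparable} (neither contains the other); it does not give independence. When $v$ has rank $\geq 2$, two distinct extensions $w_1, w_2$ can share a common nontrivial coarsening $w'$ (one which extends a proper coarsening $v'$ of $v$), in which case $R_{w_1} R_{w_2} \subseteq R_{w'} \subsetneq L$, so the valuations are dependent and the weak approximation theorem in its standard form does not apply. Concretely, then, the construction of the $c_{i,j,k}$ is not justified: forcing $w_{i_0}(c_{i,j,k} - a_{i,j} b_{i,k}) > w_{i_0}(b_{i,k})$ pins down the coarsened value $w'(c_{i,j,k}) = w'(b_{i,k})$, and this in turn bounds $w_{i'}(c_{i,j,k})$ whenever $w'$ is also a coarsening of $w_{i'}$. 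One cannot simply demand that $w_{i'}(c_{i,j,k})$ ``exceeds an explicit bound'' and invoke approximation; one must verify that the needed bound is compatible with the constraint imposed by the common coarsenings. This is exactly the delicate point that Bourbaki's treatment handles carefully (or avoids, by a different route), and it is elided here.

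A second, independent gap is the assertion that the filtration of $R_{w_i}/m_v R_{w_i}$ by cosets of $\Gamma_v$ in $\Gamma_{w_i}$ has all $e_i$ steps with quotient $\kappa(w_i)$, giving dimension $e_i f_i$. In general one only gets $\dim_{\kappa(v)} R_{w_i}/m_v R_{w_i} \leq e_i f_i$: the $w_i$-values occurring in the quotient lie in $T = \{\gamma \geq 0 : \gamma < \delta \text{ for all } \delta \in \Gamma_v^{>0}\}$, and while $T$ maps injectively to $\Gamma_{w_i}/\Gamma_v$ (so $|T| \leq e_i$), it need not map surjectively when $v$ has rank $\geq 2$ (some cosets may have no representative that is nonnegative yet infinitesimal relative to $\Gamma_v$). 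Your argument for ``finitely generated $\implies$ equality'' can be repaired by using only the inequality $\dim R_{w_i}/m_v R_{w_i} \leq e_i f_i$ together with the already-established $\sum e_i f_i \leq n$: if $S$ is free of rank $n$, then $n = \dim_{\kappa(v)} S/m_v S = \sum_i \dim R_{w_i}/m_v R_{w_i} \leq \sum_i e_i f_i \leq n$, forcing equality. But as written the proof invokes the exact formula, which is false. Finally, the converse direction (``equality $\implies$ finitely generated'') is not actually proved---you acknowledge it is the main obstacle and sketch ``a local comparison leveraging the tightness of the bound,'' but no argument is given. That direction is genuinely the hard part, and given the approximation issue above, the sharpened $c_{i,j,k}$ you propose to use as an $R_v$-basis of $S$ have not been shown to exist with the required properties, let alone to generate $S$.
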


\subsection{Abhyankar Valuations}
Fix a field $K$ finitely generated  over a fixed ground field $k$, and let $v$ be a valuation on $K/k$.
By definition, the \textbf{transcendence degree} of $v$ is the transcendence degree of the field extension
$$k \hookrightarrow \kappa(v).$$

{}

  The main result about the transcendence degree of valuations is due to Abhyakar  \cite{Abhyankar}. See also \cite[VI.10.3, Corollary 1]{Bourbaki}.

{}

\begin{theorem} [Abhyankar's Inequality]
\label{abhyankar's theorem} 
Let $K$ be a finitely generated field extension of $k$, and let $v$ be a valuation on $K/k$. Then
\begin{equation}
\operatorname{trans.deg} \hspace{1mm} v + \operatorname{rat.rank} \hspace{1mm} \Gamma_v \leq \operatorname{trans. deg} \hspace{1mm} K/k. \label{Abineq}
\end{equation}

  Moreover if equality holds, then $\Gamma_v$ is a finitely generated abelian group, and $\kappa(v)$ is a finitely generated extension of $k$.
\end{theorem}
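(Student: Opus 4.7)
The plan is to prove both parts simultaneously by choosing convenient elements of $K$ that witness the rational rank of $\Gamma_v$ and the transcendence degree of $\kappa(v)/k$. Set $r = \operatorname{rat.rank} \hspace{1mm} \Gamma_v$ and $s = \operatorname{trans.deg} \hspace{1mm} v$, and pick $x_1, \ldots, x_r \in K^{\times}$ whose values form a $\mathbb{Q}$-basis of $\mathbb{Q} \otimes_{\mathbb{Z}} \Gamma_v$, together with $y_1, \ldots, y_s \in R_v$ whose residues in $\kappa(v)$ are algebraically independent over $k$. Note that $v(y_j) = 0$ since each $\bar y_j \neq 0$. The strategy is to show that $x_1, \ldots, x_r, y_1, \ldots, y_s$ are algebraically independent over $k$, which yields $r + s \leq \operatorname{trans.deg} \hspace{1mm} K/k$.

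To prove this algebraic independence, suppose toward contradiction that $\sum_{\alpha,\beta} c_{\alpha\beta} x^{\alpha} y^{\beta} = 0$ with $c_{\alpha\beta} \in k$ not all zero (multi-index notation). Regrouping gives $\sum_{\alpha} x^{\alpha} P_{\alpha}(y) = 0$ where $P_{\alpha}(y) := \sum_{\beta} c_{\alpha\beta} y^{\beta} \in k[y_1,\ldots,y_s]$. For each nonzero $P_{\alpha}$, the residue $P_{\alpha}(\bar y) \in \kappa(v)$ is nonzero by algebraic independence of the $\bar y_j$ over $k$, so $v(P_{\alpha}(y)) = 0$ and hence $v(x^{\alpha} P_{\alpha}(y)) = \sum_{i} \alpha_i v(x_i)$. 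The $\mathbb{Q}$-linear independence of the $v(x_i)$ in the torsion-free group $\Gamma_v$ forces these values to be distinct for distinct $\alpha$, so the minimum-value term is unique; the standard strengthening of the ultrametric inequality then makes the sum nonzero, a contradiction.

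For the equality case, assume $r + s = n := \operatorname{trans. deg} \hspace{1mm} K/k$, so $\{x_i\} \cup \{y_j\}$ is a transcendence basis and $K$ is finite over $L := k(x_1,\ldots,x_r,y_1,\ldots,y_s)$. I would compute $\Gamma_{v|_L}$ and $\kappa(v|_L)$ directly: for nonzero $f = \sum_{\alpha} x^{\alpha} P_{\alpha}(y) \in k[x,y]$, the argument above identifies $v(f) = \sum_i \alpha_{0,i} v(x_i)$ where $\alpha_0$ is the unique multi-index (with $P_{\alpha_0} \neq 0$) minimizing $\sum \alpha_i v(x_i)$. Factoring $f = x^{\alpha_0}\bigl(P_{\alpha_0}(y) + \text{terms of strictly greater value}\bigr)$ shows that $f/x^{\alpha_0}$ is a unit of $R_{v|_L}$ with residue $P_{\alpha_0}(\bar y)$. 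Applying this to numerator and denominator of an arbitrary $h \in L^{\times}$ proves $v(h) \in \mathbb{Z}\langle v(x_1),\ldots,v(x_r)\rangle$, and when $v(h) = 0$ the $\mathbb{Q}$-linear independence forces the leading multi-indices of numerator and denominator to agree, so the $x$-monomials cancel and $\bar h \in k(\bar y_1,\ldots,\bar y_s)$. Hence $\Gamma_{v|_L} \cong \mathbb{Z}^r$ is finitely generated and $\kappa(v|_L) = k(\bar y_1,\ldots,\bar y_s)$ is finitely generated over $k$. Applying Proposition \ref{key theorem for F-finiteness results} to the finite extension $L \subseteq K$, both $e(v/v|_L)$ and $f(v/v|_L)$ are finite, so $\Gamma_v$ is a finite-index supergroup of $\mathbb{Z}^r$ (hence finitely generated) and $\kappa(v)$ is a finite algebraic extension of $k(\bar y_1,\ldots,\bar y_s)$ (hence finitely generated over $k$).

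The main obstacle is the explicit identification of $\kappa(v|_L)$ in the equality case: the value-group calculation is immediate once one has the unique-minimum trick, but computing the residue field requires carefully tracking the leading monomial in both numerator and denominator of a ratio, and then exploiting $\mathbb{Q}$-linear independence to conclude $\alpha_0 = \beta_0$ precisely when $v(h) = 0$, so that the $x$-factors cancel in the residue.
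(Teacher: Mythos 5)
The paper does not prove Abhyankar's Inequality; it is quoted from Abhyankar's original paper with a pointer to Bourbaki VI.10.3, Corollary 1, so there is no in-paper argument against which to compare your proposal. Your proof is correct, and it is essentially the classical Zariski--Bourbaki argument. The engine---that if $v(x_1),\dots,v(x_r)$ are $\mathbb{Q}$-linearly independent in $\Gamma_v$ and $\bar{y}_1,\dots,\bar{y}_s$ are algebraically independent over $k$ in $\kappa(v)$, then distinct monomial blocks $x^{\alpha}P_{\alpha}(y)$ carry distinct values (because $\Gamma_v$ is torsion-free, so $\mathbb{Q}$-independence gives $\mathbb{Z}$-independence, and $v(P_\alpha(y))=0$ for nonzero $P_\alpha$), hence no cancellation is possible in a nontrivial polynomial relation---is exactly the standard unique-minimum trick. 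Your treatment of the equality case, computing $\Gamma_{v|_L}\cong\mathbb{Z}^r$ and $\kappa(v|_L)=k(\bar{y}_1,\dots,\bar{y}_s)$ explicitly over the rational subfield $L=k(x,y)$ by matching leading monomials in numerator and denominator, and then importing finiteness of $e(v/v|_L)$ and $f(v/v|_L)$ across the finite extension $L\subseteq K$, is likewise the classical route.

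One small presentational point: you ``set $r=\operatorname{rat.rank}\,\Gamma_v$'' and then choose $x_1,\dots,x_r$, which presupposes $r<\infty$ before the inequality is proved. The cleaner version of your own argument is to observe that \emph{any} finite collection of elements $x_1,\dots,x_{r'},y_1,\dots,y_{s'}$ of the indicated form is algebraically independent over $k$ in $K$; since $\operatorname{trans.deg}\,K/k=n$ is finite, this simultaneously forces $r'\le n$, $s'\le n$, and $r'+s'\le n$ for all admissible finite choices, whence $r$ and $s$ are finite and $r+s\le n$. With that adjustment the argument is complete and matches the cited sources.
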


{}

  We say $v$ is an \textbf{Abhyankar valuation} if equality holds in Abhyankar's Inequality (\ref{Abineq}). Note that an Abyhankar valuation has a finitely generated value group, and its residue field is finitely generated over the ground field $k$. 

{}

\begin{example}
\label{AbEx}
Let $K/k$ be the function field of a normal algebraic variety $X$ of dimension  $n$ over a ground field $k$. For a  prime divisor $Y$ of $X$, consider the local ring $\mathcal O_{X,Y}$ of  rational functions on $X$ regular at $Y$.  The ring $\mathcal O_{X,Y}$ is a discrete valuation ring, corresponding to a valuation $v$ (the order of vanishing along $Y$) on $K/k$; this valuation is  of rational rank one and  transcendence degree $n-1$ over $k$, hence Abhyankar.  Such a valuation is called a {\textbf {divisorial valuation.}} 
Conversely,  every rational rank one  Abhyankar valuation is  divisorial:
 for such a $v$, there exists some normal model $X$ of $K/k$ and a divisor $Y$ such that $v$ is the order of vanishing along $Y$
 \cite[VI, \S 14, Thm 31]{Z-S}.
\end{example}

{}

\begin{proposition}
\label{extension and restriction of Abhyankar valuations}
Let $K \subseteq L$ be a finite extension of finitely generated field extensions of $k$, and suppose that $w$ is valuation on $L/k$ extending a valuation $v$ on $K/k$. Then $w$ is Abhyankar  if and only if $v$ is Abhyankar.
\end{proposition}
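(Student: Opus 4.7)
The plan is to reduce this to the fact that the key numerical invariants in Abhyankar's inequality are preserved when passing between $v$ and an extension $w$, under the finite-extension hypothesis. Write $n = [L:K]$. Since $K \subseteq L$ is finite, we immediately have $\operatorname{trans.deg} K/k = \operatorname{trans.deg} L/k$, so the right-hand sides of the two instances of Abhyankar's inequality (\ref{Abineq}) coincide. Thus it suffices to show that
\[
\operatorname{trans.deg} v + \operatorname{rat.rank} \Gamma_v = \operatorname{trans.deg} w + \operatorname{rat.rank} \Gamma_w,
\]
and then equality in one instance of (\ref{Abineq}) is equivalent to equality in the other.

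For the transcendence-degree term, I would invoke Proposition \ref{key theorem for F-finiteness results}: the residue degree $f(w/v) = [\kappa(w):\kappa(v)]$ is finite because $e(w/v)f(w/v) \leq n$. Hence $\kappa(v) \hookrightarrow \kappa(w)$ is a finite field extension, and therefore $\operatorname{trans.deg}_k \kappa(v) = \operatorname{trans.deg}_k \kappa(w)$, i.e.\ $\operatorname{trans.deg} v = \operatorname{trans.deg} w$.

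For the rational-rank term, use that the natural injection $\Gamma_v \hookrightarrow \Gamma_w$ has finite index $e(w/v) \leq n$ by the same proposition. Tensoring the short exact sequence $0 \to \Gamma_v \to \Gamma_w \to \Gamma_w/\Gamma_v \to 0$ with $\mathbb Q$ over $\mathbb Z$ kills the finite torsion quotient $\Gamma_w/\Gamma_v$, so $\mathbb Q \otimes_{\mathbb Z} \Gamma_v \to \mathbb Q \otimes_{\mathbb Z} \Gamma_w$ is an isomorphism of $\mathbb Q$-vector spaces. Therefore $\operatorname{rat.rank} \Gamma_v = \operatorname{rat.rank} \Gamma_w$.

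Combining these two equalities gives the displayed identity above, and the proposition follows at once. The argument is quite short; there is no serious obstacle, and the only input beyond elementary observations is the finiteness of $e(w/v)$ and $f(w/v)$, which is Proposition \ref{key theorem for F-finiteness results}.
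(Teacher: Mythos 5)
Your proof is correct and follows essentially the same approach as the paper's: compare transcendence degrees of $K$ and $L$ (equal since $L/K$ is finite), compare transcendence degrees of residue fields via finiteness of $f(w/v)$, and compare rational ranks by tensoring $0 \to \Gamma_v \to \Gamma_w \to \Gamma_w/\Gamma_v \to 0$ with $\mathbb Q$ and using finiteness of $e(w/v)$. The only cosmetic difference is that you cite Proposition~\ref{key theorem for F-finiteness results} while the paper cites the inequality~(\ref{ramification/residue for finite extensions is finite}) directly; both give the same finiteness facts.
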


\begin{proof}
Since $L/K$ is  finite,  $L$ and $K$ have the same transcendence degree over $k$. On the other hand, the extension
$ \kappa(v) \subseteq \kappa(w)$ is also finite by (\ref{ramification/residue for finite extensions is finite}), and so $ \kappa(v)$ and $\kappa(w)$ also have the same transcendence degree over $k$.
Again by (\ref{ramification/residue for finite extensions is finite}), since  $\Gamma_w/\Gamma_v$ is a finite abelian group, $\mathbb{Q} \otimes_{\mathbb{Z}} \Gamma_w/\Gamma_v = 0$. By exactness of 
$$0 \rightarrow \mathbb{Q} \otimes_{\mathbb{Z}} \Gamma_v \rightarrow \mathbb{Q} \otimes_{\mathbb{Z}} \Gamma_w \rightarrow \mathbb{Q} \otimes_{\mathbb{Z}} \Gamma_w/\Gamma_z \rightarrow 0$$ 
we conclude that $\Gamma_w $ and $ \Gamma_v$ have the same rational rank.
The result is now clear from the definition of an Abhyankar valuation.
\end{proof}

\subsection{Frobenius}
\label{Frobenius} 
Let $R$ be a ring of prime characteristic $p$. The Frobenius map $R\overset{F}\to R$ is defined by $F(x) = x^p$. We can denote the target copy of $R$ by $F_*R$ 
and view it as an $R$-module via restriction of scalars by $F$; thus $F_*R$ is both a ring (indeed, it is precisely $R$) and an $R$-module in which the action of  $r \in R$  on $x \in F_*R$ produces $ r^px$.  With this notation, the Frobenius map $F:R\rightarrow F_*R$ and its iterates 
 $F^e: R \rightarrow F_*^eR$ are ring maps, as well as $R$-module maps. 
See \cite[1.0.1]{Smith-Zhang} for a  further discussion of this notation. 

{}

  We note that $F_*^{e}$ gives us an exact covariant functor from the category of $R$-modules to itself. This is nothing but the usual restriction of scalars functor associated to the ring homomorphism $F^{e}: R \rightarrow R$. 

{}

For an ideal $I\subset R$, the notation $I^{[p^e]}$ denotes the ideal generated by the $p^e$-th powers of the elements of $I$. Equivalently, $I^{[p^e]}$ is the expansion of $I$ under the Frobenius map, that is, $I^{[p^e]} = IF^e_*R$ as subsets of $R$.

{}

  The image of $F^e$ is the subring  $R^{p^e} \subset R$ of $p^e$-th powers. If $R$ is reduced (which is equivalent to the injectivity of Frobenius),
statements about the $R$-module $F_*^eR$ are equivalent to statements about the  $R^{p^e}$-module $R$.

\begin{definition} 
A ring $R$ of characteristic $p$ is \textbf{F-finite} if $F:R\rightarrow F_*R$ is a finite map of rings, or equivalently, if $R$ is a finitely generated $R^p$-module.  Note that $F:R\rightarrow F_*R$ is a finite map if and only if $F^e:R \rightarrow F_*^{e}R$ is a finite map for all $e>0$. 
\end{definition}

{}

F-finite rings are ubiquitous. For example, every perfect field is F-finite, and a finitely generated algebra over an F-finite ring is F-finite. Furthermore, F-finiteness is preserved under homomorphic images, localization and completion. This means that nearly every ring classically arising in algebraic geometry is F-finite. However, valuation rings even of F-finite fields are often not F-finite.

\subsection{F-purity and Frobenius splitting}\label{F-purity and Frobenius splitting}
We first review purity and splitting for maps of modules over an arbitrary commutative ring $A$, not necessarily Noetherian or of prime characteristic. 
A map of $A$-modules $M\overset{\varphi}\to N$ is \textbf{pure} if for any $A$-module $Q$, the induced map
$$M \otimes_A Q \rightarrow N \otimes_A Q$$
is injective.  The map $M\overset{\varphi}\to N$ is \textbf{split} if $\varphi$ has a left inverse in the category of $A$-modules. Clearly, a split map is always pure. 
Although it is not obvious, the converse holds under a weak hypothesis:

\begin{lemma}\label{splitvspure} \cite[Corollary 5.2]{HR1}
Let $M\overset{\varphi}\to N$ be a pure map of $A$-modules where $A$ is a  commutative ring. Then $ \varphi$
is split if the cokernel $N/\varphi(M)$ is finitely presented.
\end{lemma}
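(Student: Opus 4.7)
The plan is to reduce to splitting the short exact sequence
\[
0 \to M \xrightarrow{\varphi} N \xrightarrow{\pi} C \to 0,
\]
where $C = N/\varphi(M)$. Note that $\varphi$ is injective: taking $Q = A$ in the definition of purity shows $M \to N$ is injective. A splitting of $\varphi$ is the same as a section of $\pi$, so the goal becomes lifting $\mathrm{id}_C : C \to C$ to a map $s : C \to N$ with $\pi \circ s = \mathrm{id}_C$.

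Next I would exploit the finite presentation of $C$ to reformulate the lifting problem as a finite-dimensional one. Choose an exact sequence $A^n \xrightarrow{\alpha} A^m \xrightarrow{\beta} C \to 0$. Since $A^m$ is free and $\pi$ is surjective, $\beta$ lifts through $\pi$ to some $g : A^m \to N$. Then $g \circ \alpha$ maps to $\beta \circ \alpha = 0$ in $C$, so it lands in $\varphi(M) = \ker \pi$, and by injectivity of $\varphi$ gives a unique $h : A^n \to M$ with $\varphi \circ h = g \circ \alpha$. The key step is then to use purity to produce $\tilde h : A^m \to M$ satisfying $\tilde h \circ \alpha = h$. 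This is an instance of the equational criterion for purity: $\varphi$ is pure if and only if every finite system of $A$-linear equations with coefficients in $A$ and right-hand sides in $M$ that admits a solution in $N$ already admits one in $M$. Here the coefficients are read off from $\alpha$, the right-hand sides from $h$, and $g$ supplies an $N$-solution, so purity delivers $\tilde h$.

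With $\tilde h$ in hand, a short diagram chase finishes the argument. The map $\sigma := g - \varphi \circ \tilde h : A^m \to N$ vanishes on $\mathrm{im}(\alpha)$ by construction, so it descends to a map $s : C \to N$ with $s \circ \beta = \sigma$. Composing with $\pi$ and using $\pi \circ \varphi = 0$ yields $\pi \circ s \circ \beta = \pi \circ g = \beta$, and the surjectivity of $\beta$ forces $\pi \circ s = \mathrm{id}_C$, as desired. The main obstacle is the equational criterion itself---this is the technical core of \cite[Corollary 5.2]{HR1}, and it is proved by testing purity against a suitably chosen finitely presented module built from $\alpha$ (for instance, the cokernel of the transpose map $A^m \to A^n$) together with a filtered-colimit argument reducing arbitrary test modules $Q$ to finitely presented ones. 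Everything else in the proof is formal.
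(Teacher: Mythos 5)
The paper gives no proof of this lemma; it simply cites \cite[Corollary 5.2]{HR1}, so there is no in-paper argument to compare against. Your sketch is nevertheless a correct and essentially standard proof of the statement, and it is worth remarking on the details.

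Your reduction is sound: purity with $Q=A$ gives injectivity of $\varphi$, so one has a short exact sequence $0\to M\to N\xrightarrow{\pi} C\to 0$, and splitting $\varphi$ is equivalent to finding a section of $\pi$. Choosing a finite presentation $A^n\xrightarrow{\alpha} A^m\xrightarrow{\beta} C\to 0$, lifting $\beta$ to $g:A^m\to N$ via freeness, and obtaining $h:A^n\to M$ with $\varphi\circ h=g\circ\alpha$ are all fine. The key step, producing $\tilde h:A^m\to M$ with $\tilde h\circ\alpha=h$, is exactly where purity enters, and you correctly identify the mechanism: view $h$ as an element of $M^n$, note that its image $\varphi(h)$ lies in the image of $\alpha^{T}:N^m\to N^n$, and tensor the presentation $A^m\xrightarrow{\alpha^T}A^n\to Q\to 0$ of $Q:=\operatorname{coker}(\alpha^T)$ with $M$ and $N$; purity of $M\otimes Q\hookrightarrow N\otimes Q$ forces $h$ to lie in the image of $\alpha^T:M^m\to M^n$. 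The concluding diagram chase with $\sigma=g-\varphi\circ\tilde h$ is correct. One small inaccuracy: the filtered-colimit argument you mention is needed only for the converse implication (that the equational criterion implies purity) or to show purity can be tested on finitely presented modules; for the direction used here, testing against the single module $\operatorname{coker}(\alpha^T)$ already suffices, so that remark is dispensable. Overall this is a complete and correct argument that fills in a proof the paper delegates to a reference.
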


\begin{definition} 
\label{basic char p notions} Let $R$ be an arbitrary commutative ring of prime characteristic $p$.
\begin{enumerate}
\item[(a)] The ring $R$ is \textbf{Frobenius split} if the map $F:R\rightarrow F_*R$ splits as a map of $R$-modules, that is, there exists an $R$-module map $F_*R \rightarrow R$ such that the composition
$$R \xrightarrow{F} F_*R \rightarrow R$$
is the identity map.
\item[(b)] The ring $R$ is \textbf{F-pure} if $F:R \rightarrow F_*R$ is a pure map of $R$-modules.
\end{enumerate}
\end{definition}

{}
 
A Frobenius split ring is always F-pure. The converse is also true under modest hypothesis:

{}

\begin{corollary}\label{equivalence}
\label{equivalence of splitting and purity}
A Noetherian F-finite ring of characteristic $p$ is Frobenius split if and only if it is $F$-pure.
\end{corollary}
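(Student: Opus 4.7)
The forward direction, that Frobenius split implies F-pure, is immediate from the fact noted just before the corollary: any split map of modules is pure, so if $F\colon R \to F_*R$ splits as an $R$-module map then it is automatically pure.

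For the converse, the plan is to invoke Lemma \ref{splitvspure} applied to the Frobenius map $F\colon R \to F_*R$, which is pure by the assumption of F-purity. According to that lemma, it suffices to show that the cokernel $F_*R/F(R)$ is a finitely presented $R$-module.

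This is where the hypotheses of F-finiteness and Noetherianity come in. Since $R$ is F-finite, $F_*R$ is a finitely generated $R$-module by definition. The cokernel $F_*R/F(R)$ is therefore a quotient of a finitely generated $R$-module, hence finitely generated. Since $R$ is Noetherian, every finitely generated $R$-module is finitely presented, so $F_*R/F(R)$ is finitely presented. Applying Lemma \ref{splitvspure} concludes that $F$ is split, i.e. $R$ is Frobenius split.

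There is no real obstacle here; the proof is essentially a direct application of Lemma \ref{splitvspure}, and the role of the two hypotheses (Noetherianity and F-finiteness) is precisely to guarantee the finite presentation condition needed by that lemma.
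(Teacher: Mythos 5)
Your proof is correct and follows exactly the same route as the paper's: the forward direction is the trivial ``split implies pure,'' and the converse applies Lemma \ref{splitvspure} after noting that F-finiteness makes $F_*R$ (hence its quotient, the cokernel) finitely generated, and Noetherianity upgrades finitely generated to finitely presented. No differences worth noting.
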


\begin{proof}
The F-finiteness hypothesis implies that $F_*R$ is a finitely generated $R$-module. So a quotient of $F_*R$ is also finitely generated. Since a finitely generated module over a Noetherian ring is finitely presented, the result follows from Lemma \ref{splitvspure}.
\end{proof}

\subsection{F-finiteness and Excellence}\label{F-finiteExcellence}
Although we are mainly concerned with non-Noetherian rings in this paper, it is worth pointing out the following curiosity for readers familiar with Grothendieck's concept of an {\it excellent ring,} a particular kind of Noetherian ring expected to be the most general setting for many algebro-geometric statements \cite[D\'efinition 7.8.2]{Groth}.

\begin{proposition}\label{F-finiteExcellent} A Noetherian domain is  F-finite if and only if it is excellent and its fraction field is F-finite.
\end{proposition}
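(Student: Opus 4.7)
The plan is to prove both directions by reducing to known structural results about excellent rings of characteristic $p$.

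For the forward direction, suppose $R$ is F-finite. Since $K = \mathrm{Frac}(R)$ is a localization of $R$ and F-finiteness is preserved under localization (the functor $F_*$ commutes with localization of the base), $K$ is automatically F-finite. Excellence then follows from Kunz's theorem that every F-finite Noetherian ring is excellent. This direction is thus essentially a citation.

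For the reverse direction, suppose $R$ is excellent and $K$ is F-finite. The idea is to sandwich $F_*R$ between $R$ and an integral closure that is already known to be module-finite. Fix a perfect closure $K^{\mathrm{perf}}$ of $K$ and identify $F_*R$, as an $R$-module, with the subring
\[
R^{1/p} := \{\alpha \in K^{\mathrm{perf}} : \alpha^p \in R\},
\]
where the $R$-action on $R^{1/p}$ is ordinary multiplication inside $K^{\mathrm{perf}}$; the isomorphism $R \cong R^{1/p}$, $r\mapsto r^{1/p}$, intertwines the Frobenius-twisted $R$-action on $F_*R$ with the multiplicative one on $R^{1/p}$. Then $R \subseteq R^{1/p} \subseteq K^{1/p}$, and by hypothesis $K^{1/p}$ is a finite field extension of $K$. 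Every element of $R^{1/p}$ satisfies $x^p - r = 0$ for some $r \in R$, hence is integral over $R$, so $R^{1/p}$ lies in the integral closure $S$ of $R$ in $K^{1/p}$. Since excellent Noetherian domains are Nagata (universally Japanese), $S$ is a finitely generated $R$-module; then $R^{1/p}$, being an $R$-submodule of the finite $R$-module $S$ over the Noetherian ring $R$, is itself finitely generated, which is precisely the F-finiteness of $R$.

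The main obstacle is not a technical difficulty in the core argument but the need to invoke two nontrivial external inputs: Kunz's theorem that F-finite implies excellent for the forward direction, and the fact that excellent rings are Nagata (so that integral closures in finite extensions of the fraction field are module-finite) for the reverse direction. Once both of these are granted, what remains is just the elementary observation that $R^{1/p}$ sits inside the integral closure of $R$ in $K^{1/p}$, together with Noetherianity of $R$ to propagate finite generation down to the intermediate submodule.
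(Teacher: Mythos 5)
Your proof is correct and takes essentially the same approach as the paper; the only difference is that you work with the inclusion $R\subseteq R^{1/p}\subseteq K^{1/p}$ while the paper works with the isomorphic picture $R^p\subseteq R\subseteq K$ (applying the N-2/Japanese property of the excellent ring $R^p$ rather than of $R$), and you phrase the forward direction via commutation of $F_*$ with localization instead of the tensor computation $R\otimes_{R^p}K^p\cong K$. Both choices are cosmetic; the key inputs (Kunz's theorem that F-finite Noetherian rings are excellent, and module-finiteness of integral closure in a finite field extension for excellent domains) are identical.
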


\begin{proof} If $R$ is F-finite with fraction field $K$,  then also $R \otimes_{R^p} K^p \cong K$  is finite over $K^p$, so the fraction field of $R$ is F-finite. Furthermore, 
Kunz showed that F-finite Noetherian rings are excellent \cite[Theorem 2.5]{Kunz1}.

 We need to show that an excellent Noetherian domain with F-finite fraction field is F-finite.
 We make use of the following well-known property{\footnote{sometimes called the {\textbf{Japanese}} or {\textbf{N2}} property.} of an excellent domain $A$}:  the integral closure of $A$ in any finite extension of its fraction field is {\it finite} as a $A$-module \cite[IV, 7.8.3 (vi)]{Groth}.  The ring $R^p$ is excellent because it is isomorphic to $R$, and its fraction field is $K^p$. Since $K^p\hookrightarrow K$ is finite,   the  integral closure $S$  of $R^p$ in   $K$ 
is a finite $R^p$-module.   But clearly  $R \subset S$, so $R$ is also a finitely generated $R^p$ module, since submodules of a Noetherian module over a Noetherian ring are Noetherian. That is, $R$ is F-finite.
\end{proof}

Using this observation, we can clarify the relationship between F-purity and Frobenius splitting in an important class of rings.

\begin{corollary}\label{equivInExcellent} For an excellent  Noetherian domain whose fraction field is F-finite,  Frobenius splitting is equivalent to F-purity.
\end{corollary}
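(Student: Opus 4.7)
The plan is to deduce this corollary immediately from the two preceding results, with essentially no extra work. The hypothesis is designed to match the criterion in Proposition \ref{F-finiteExcellent}: an excellent Noetherian domain with F-finite fraction field is itself F-finite. Once F-finiteness of $R$ is in hand, Corollary \ref{equivalence} applies verbatim to give the equivalence of Frobenius splitting and F-purity.

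So the proof I would write consists of two lines. First, invoke Proposition \ref{F-finiteExcellent} to conclude that $R$ is F-finite. Second, invoke Corollary \ref{equivalence} (whose hypotheses are now satisfied, since $R$ is Noetherian and F-finite) to conclude that $R$ is Frobenius split if and only if $R$ is F-pure.

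There is no genuine obstacle here: the corollary is a purely formal consequence of combining the excellence-to-F-finiteness implication from Proposition \ref{F-finiteExcellent} with the F-finite version of the splitting-versus-purity equivalence from Corollary \ref{equivalence}. The only thing worth double-checking is that no extra hypothesis on $R$ is needed beyond what is stated; since Proposition \ref{F-finiteExcellent} requires $R$ to be a domain (which it is) and Corollary \ref{equivalence} requires only Noetherian plus F-finite (both now established), the deduction is clean.
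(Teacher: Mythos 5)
Your proposal is correct and follows essentially the same two-step route as the paper: first invoke Proposition \ref{F-finiteExcellent} to get F-finiteness, then conclude the equivalence of splitting and purity. The only cosmetic difference is that the paper cites Lemma \ref{splitvspure} directly for the second step, whereas you route through Corollary \ref{equivalence}, which is itself a one-line consequence of that lemma.
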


\begin{proof}[Proof of Corollary]
Our hypothesis implies F-finiteness, so splitting and purity are equivalent by Lemma \ref{splitvspure}.
\end{proof}

\section{Flatness and Purity  of Frobenius in Valuation Rings.}
\label{flatness and purity for valuations}

  Kunz showed that for a Noetherian ring of characteristic $p$, the Frobenius map is flat if and only if the ring is regular \cite[Theorem 2.1]{Kunz}.
In this section, we show how standard results on valuations yield the following  result:
 
\begin{Theorem}
\label{flatness and faith flatness of Frobenius for Valuation rings}
Let $V$ be a valuation ring of characteristic $p$. Then the Frobenius map $F: V \rightarrow F_*V$ is faithfully flat. \end{Theorem}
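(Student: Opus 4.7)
The plan is to exploit the classical fact, valid for any valuation ring $V$ (of arbitrary characteristic), that a $V$-module is flat if and only if it is torsion-free; see for example \cite[VI.3.6, Lemma 1]{Bourbaki} or \cite[Chapter 4]{Mat}. This reduces flatness of Frobenius to a torsion-freeness check, and faithful flatness will then follow easily from the locality of $V$.

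First I would reduce the question of flatness of $F\colon V\to F_*V$ to showing that $F_*V$ is torsion-free over $V$. Unwinding the definitions, $F_*V$ has the same underlying abelian group as $V$, and $r\in V$ acts on $x\in F_*V$ by $r\cdot x=r^p x$. Thus if $x\in F_*V$ is annihilated by some nonzero $r\in V$, then $r^p x=0$ in $V$; but $V$ is a domain, so $r^p\neq 0$ forces $x=0$. Hence $F_*V$ is a torsion-free $V$-module, and by the quoted equivalence it is flat.

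For faithful flatness, I would invoke the standard criterion that a flat local homomorphism of local rings is automatically faithfully flat, and observe that the ring map $F\colon V\to F_*V$ is local: any $x\in m_v$ satisfies $x^p\in m_v$, so Frobenius sends the maximal ideal of $V$ into the maximal ideal of $F_*V$. Equivalently, one can check directly that $m_v\cdot F_*V$, which in ring-theoretic terms is the ideal $m_v^{[p]}\subseteq V$, is contained in $m_v$ and hence proper in $F_*V$. Either way, flatness upgrades to faithful flatness.

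There is essentially no obstacle here beyond citing the torsion-free $\Leftrightarrow$ flat equivalence; the remainder is a direct verification using only that $V$ is a local domain. The proof stands in sharp contrast to Kunz's theorem in the Noetherian case, precisely because valuation rings, although rarely Noetherian, enjoy an unusually rich supply of flat modules.
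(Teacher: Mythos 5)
Your proof is correct and follows essentially the same route as the paper's: both reduce flatness to the observation that $F_*V$ is torsion-free (invoking the same Bourbaki lemma that torsion-free modules over valuation rings are flat), and both upgrade to faithful flatness by noting that $m_v F_*V = m_v^{[p]}$ is a proper ideal. Your phrasing via the "flat local homomorphism is faithfully flat" criterion is just a repackaging of the paper's direct check that $1 \notin m_v F_*V$.
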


{} 

 This suggests that we can imagine a valuation ring to be ``regular'' in some sense. 
Of course, a Noetherian valuation ring is either a field or  a one dimensional regular local ring--- but because valuation rings are rarely Noetherian, Theorem  \ref{flatness and faith flatness of Frobenius for Valuation rings} is not a consequence of Kunz's theorem.

{}

Theorem \ref{flatness and faith flatness of Frobenius for Valuation rings} follows  from the following general result, whose proof we include for the sake of completeness.

\begin{Lemma}\cite[VI.3.6, Lemma 1]{Bourbaki}.
\label{finitely generated torsion free modules over valuation rings}
A finitely-generated, torsion-free module over a  valuation ring  is free. In particular, a torsion free module over a valuation ring is flat. 
\end{Lemma}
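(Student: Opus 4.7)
The plan is to prove the first assertion directly by showing that any minimal generating set of a finitely generated torsion-free module $M$ over a valuation ring $V$ must in fact be a basis, and then to bootstrap to the second assertion by writing an arbitrary torsion-free module as a filtered colimit of finitely generated submodules.

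For the first statement, I would pick a minimal generating set $m_1, \dots, m_n$ of $M$ and seek a contradiction by supposing there is a nontrivial relation $a_1 m_1 + \cdots + a_n m_n = 0$ with not all $a_i$ zero. The key input is the defining property of a valuation ring: its set of principal ideals is totally ordered by divisibility (as recalled earlier in the paper). Hence among the nonzero coefficients $a_i$ there is one, say $a_1$ after reindexing, such that $a_1$ divides every other $a_i$; write $a_i = a_1 b_i$ (setting $b_1 = 1$ and $b_i = 0$ whenever $a_i = 0$). The relation then reads $a_1 ( m_1 + b_2 m_2 + \cdots + b_n m_n ) = 0$, and since $M$ is torsion-free and $a_1 \neq 0$, we deduce $m_1 = -b_2 m_2 - \cdots - b_n m_n$. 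This contradicts the minimality of $\{m_1,\dots,m_n\}$, so the relation was trivial and $m_1,\dots,m_n$ form a basis, making $M$ free.

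For the second statement, I would invoke the standard fact that any module is the filtered colimit of its finitely generated submodules. Since submodules of a torsion-free module are torsion-free, the first part shows each of these finitely generated submodules is free, hence flat. As filtered colimits of flat modules are flat, $M$ itself is flat.

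The only subtle point, and hence the main thing to be careful about, is the choice of the ``minimal divisor'' $a_1$: one must argue on the nonzero $a_i$ alone and use total ordering of principal ideals, which is the one place where the valuation hypothesis (as opposed to, say, a general domain hypothesis) enters the argument. Everything else is formal.
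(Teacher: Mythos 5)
Your proof is correct and follows essentially the same argument as the paper: pick a minimal generating set, use the total ordering of principal ideals in a valuation ring to find a coefficient of minimal value in a putative relation, factor it out, cancel by torsion-freeness, and contradict minimality; then pass to the general torsion-free case via the filtered colimit of finitely generated submodules and the fact that filtered colimits of flat modules are flat. There is no substantive difference in approach or content.
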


\begin{proof} Let $M\neq 0$ be a finitely generated, torsion-free $V$-module. Choose a minimal set of generators $\{m_1,\dots,m_n\}$.  If there is a non-trivial relation among these generators, 
then there exists $v_1, \dots, v_n \in V$ (not all zero) such that $v_1m_1 + \dots + v_nm_n = 0$. Re-ordering if necessary, we may assume that $v_1$ is minimal among (non-zero) coefficients, that is, $(v_i) \subset (v_1)$ for all $i \in \{1,\dots,n\}$.  Then for each $i > 1$, there exists $a_i \in V$ such that $v_i = a_iv_1$. This implies that 
 $$v_1(m_1 + a_2m_2 + \dots + a_nm_n) = 0.$$ 

  Since $v_1 \neq 0$ and $M$ is torsion free, we get 
$$m_1 + a_2m_2 + \dots + a_nm_n = 0.$$

   Then $m_1 = -(a_2m_2 + \dots + a_nm_n)$. So $M$ can be generated by the smaller set $\{m_2,\dots,m_n\}$ which contradicts the minimality of $n$. Hence $\{m_1,\dots,m_n\}$ must be a free generating set.

{} 

  The second statement follows by considering a torsion-free module as a directed union of its finitely generated submodules, since a directed union of flat modules is flat \cite[I.2.7 Prop 9]{Bourbaki} 
\end{proof}

\begin{proof}[Proof of Theorem \ref{flatness and faith flatness of Frobenius for Valuation rings}]
Observe that 
$F_*V$ is a torsion free $V$-module. So by Corollary \ref{finitely generated torsion free modules over valuation rings}, the module $F_*V$ is  flat, which means the Frobenius map is flat. 
To see that Frobenius is faithfully flat, we need only check that  $mF_*V \neq F_*V$ for $m$ the maximal ideal of $V$   
\cite[I.3.5 Prop 9(e)]{Bourbaki}.
But this is clear: the element  $1 \in F_*V$ is not in 
$mF_*V $, since $1\in V$ is not in the ideal $ m^{[p]}.$
\end{proof}

\begin{Corollary}
\label{Frobenius is pure}
Every valuation ring of characteristic $p $ is $F$-pure.
\end{Corollary}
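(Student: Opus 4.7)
The plan is to deduce this immediately from Theorem \ref{flatness and faith flatness of Frobenius for Valuation rings}, which has just established that the Frobenius map $F : V \to F_*V$ is faithfully flat for any valuation ring $V$ of characteristic $p$. The only additional ingredient needed is the standard fact that any faithfully flat ring homomorphism is pure.

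More precisely, I would recall (citing, e.g., \cite[I.3.5]{Bourbaki}) that if $\varphi : A \to B$ is a faithfully flat ring map and $M$ is any $A$-module, then the canonical map $M \to B \otimes_A M$, $m \mapsto 1 \otimes m$, is injective. Applying this with $A = V$, $B = F_*V$, and $\varphi = F$, one concludes that for every $V$-module $M$ the map
\[
M \;\cong\; V \otimes_V M \;\longrightarrow\; F_*V \otimes_V M
\]
is injective, which is precisely the definition of $F : V \to F_*V$ being a pure map of $V$-modules. Hence $V$ is F-pure.

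There is really no obstacle here; the corollary is a one-line consequence of the theorem together with the implication ``faithfully flat $\Rightarrow$ pure.'' The only choice to make is whether to restate the faithfully-flat-implies-pure fact with a brief sketch (purity of $V \hookrightarrow F_*V$ amounts to the injectivity of $M \to F_*V \otimes_V M$, which follows because tensoring the inclusion $V \hookrightarrow F_*V$ with $M$ is injective by flatness after one reduces to the case of finitely generated, hence by faithfulness nonzero, modules) or simply cite it.
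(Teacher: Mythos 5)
Your proposal is correct and matches the paper's argument exactly: both deduce the corollary from Theorem \ref{flatness and faith flatness of Frobenius for Valuation rings} by invoking the standard fact that faithfully flat ring maps are pure, citing the same Bourbaki reference \cite[I.3.5, Prop.\ 9(c)]{Bourbaki}. Nothing to add.
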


\begin{proof} Fix a valuation ring $V$ of characteristic $p$. 
We have already seen that the Frobenius map $V \rightarrow F_*V$ is faithfully flat (Theorem \ref{flatness and faith flatness of Frobenius for Valuation rings}). 
But any faithfully flat map of rings $A\rightarrow B$ is pure as a map of $A$-modules \cite[I.3.5 Prop 9(c)]{Bourbaki}. 
\end{proof}

\section{F-finite Valuation Rings}

  In this section, we investigate F-finiteness in valuation rings. We first prove Theorem \ref{F-finite iff free} characterizing F-finite valuation rings as those $V$ for which $F_*V$ is a {\it free} $V$-module. We then prove a numerical  characterization of F-finiteness in terms of ramification index and residue  degree for extensions of valuations under Frobenius in Theorem \ref{F-finiteness in terms of ramification and residue degree}. This characterization is useful for constructing interesting examples, and later for showing that F-finite valuations are Abhyankar.

\subsection{Finiteness and Freeness of Frobenius}   For any domain $R$ of characteristic $p$, we have already observed (see the proof of Proposition \ref{F-finiteExcellent}) that a necessary condition for F-finiteness is the F-finiteness of its fraction field. For this reason, we investigate F-finiteness of valuation rings only in   F-finite ambient fields.

\begin{theorem}
\label{F-finite iff free}
Let $K$ be an F-finite field. A valuation ring $V$ of $K$ is F-finite if and only if $F_*V$ is a free $V$-module. 
\end{theorem}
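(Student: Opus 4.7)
The plan is to prove each direction of the equivalence using two ingredients: the already-established Lemma \ref{finitely generated torsion free modules over valuation rings} (finitely generated torsion-free modules over a valuation ring are free) and the compatibility $K \otimes_V F_*V \cong F_*K$ of Frobenius push-forward with passage to the fraction field.

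For the forward direction, I would first observe that $F_*V$ is automatically torsion-free as a $V$-module: the $V$-action is $r \cdot x = r^p x$, and since $V$ is a domain, no nonzero $r \in V$ can annihilate a nonzero $x \in F_*V$. F-finiteness says that $F_*V$ is finitely generated over $V$, so Lemma \ref{finitely generated torsion free modules over valuation rings} immediately yields that $F_*V$ is $V$-free.

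For the reverse direction, suppose $F_*V = \bigoplus_{i \in I} V \cdot e_i$ is free over $V$. To see that $I$ must be finite, I would base change to $K$. Tensoring over $V$ with $K$ turns this decomposition into $\bigoplus_{i \in I} K \cdot (1 \otimes e_i)$. On the other hand, $K \otimes_V F_*V$ can be identified with $F_*K$ as a $K$-vector space: the natural map $k \otimes x \mapsto k^p x$ realizes $F_*K$ as the localization of $F_*V$ at the multiplicative subset $V \setminus \{0\}$, acting on $F_*V$ through Frobenius. Therefore $|I| = \dim_K F_*K = [K:K^p]$, which is finite by the F-finiteness of $K$. Hence $F_*V$ is finitely generated over $V$, so $V$ is F-finite.

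The only nontrivial bookkeeping is the identification $K \otimes_V F_*V \cong F_*K$, where one must keep track of the fact that the $V$-module structure on $F_*V$ is Frobenius-twisted. This is a direct verification: the map $k \otimes x \mapsto k^p x$ respects the tensor relation $kr \otimes x = k \otimes r^p x$, is $K$-linear with respect to the left $K$-action on the tensor product and the Frobenius $K$-action on $F_*K$, and is a bijection because both sides are canonically $(V \setminus \{0\})^{-1} V$ as underlying sets. Once this identification is in place, neither direction requires any further input beyond the results already quoted, and I expect the argument to be essentially immediate.
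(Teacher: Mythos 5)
Your proof is correct and follows essentially the same route as the paper: the forward direction (F-finite implies free) is exactly the paper's appeal to Lemma \ref{finitely generated torsion free modules over valuation rings} applied to the torsion-free module $F_*V$, and the reverse direction is exactly the paper's rank computation via $K \otimes_V F_*V \cong F_*K$, which bounds the size of a free basis by $[K:K^p]$. The only difference is that you spell out the verification of the base-change isomorphism $K \otimes_V F_*V \cong F_*K$ (tracking the Frobenius twist), which the paper takes as known; this is a welcome addition but does not change the argument.
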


\begin{proof}
First assume $F_*V$ is free over $V$. Since $K \otimes_R F_*V \cong F_*K$ as $K$-vector spaces, the rank of $F_*V$ over $V$ must be the same as the rank of $F_*K$ over $K$, namely the degree $[F_*K:K] = [K:K^p]$. Since $K$ is F-finite, this degree is  finite,
 and so $F_*V$ is a free $V$-module of finite rank. In particular, $V$ is F-finite. 
 
 \vspace{1mm}

  Conversely, suppose that $V$ is F-finite. Then $F_*V$ is a finitely generated, torsion-free $V$-module. So it is free by Lemma \ref{finitely generated torsion free modules over valuation rings}.
\end{proof}

\begin{corollary}
\label{F-finiteness implies F-split} 
An F-finite  valuation ring  is  Frobenius split.
\end{corollary}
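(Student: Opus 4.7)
The plan is to deduce the splitting directly by combining the three tools already available: F-purity of $V$ (Corollary \ref{Frobenius is pure}), the freeness of $F_*V$ over $V$ in the F-finite case (Theorem \ref{F-finite iff free}), and the pure-implies-split criterion of Lemma \ref{splitvspure}. Since Lemma \ref{splitvspure} only requires finite presentation of the cokernel, the whole argument reduces to checking that $F_*V/F(V)$ is finitely presented over $V$.

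First I would record that by Theorem \ref{F-finite iff free} the hypothesis of F-finiteness gives an isomorphism $F_*V \cong V^n$ of $V$-modules, where $n = [K:K^p]$ and $K = \mathrm{Frac}(V)$. In particular $F_*V$ is finitely generated. Next I would observe that the $V$-submodule $F(V) \subseteq F_*V$ is cyclic: indeed, viewing $F$ as a $V$-linear map $V \to F_*V$, one has $F(v) = v^p = v \cdot 1$ in $F_*V$, so $F(V) = V \cdot 1$. Moreover $F$ is injective (Frobenius is injective on the domain $V$), so the short exact sequence
\[
0 \longrightarrow V \xrightarrow{\ F\ } F_*V \longrightarrow F_*V/F(V) \longrightarrow 0
\]
exhibits a presentation of the cokernel in which both the free module $V^n \cong F_*V$ and the kernel (a copy of $V$) are finitely generated. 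Hence $F_*V/F(V)$ is finitely presented.

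The map $F \colon V \to F_*V$ is pure by Corollary \ref{Frobenius is pure}. Applying Lemma \ref{splitvspure} to this pure map with finitely presented cokernel yields a $V$-module retraction $F_*V \to V$ of $F$, which is exactly a Frobenius splitting of $V$.

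There is no real obstacle here; the only tiny check worth spelling out is that $F(V) \subset F_*V$ is cyclic as a $V$-submodule (not merely as an abelian group), because the $V$-action on $F_*V$ twists scalars by $p$-th powers. Once that is in hand, the finite presentation of the cokernel over the possibly non-Noetherian ring $V$ is automatic from the above two-term resolution, and the conclusion is immediate.
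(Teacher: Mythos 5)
Your argument is correct and is essentially the paper's own second argument: the paper also invokes Lemma \ref{splitvspure}, noting that the cokernel of $V \to F_*V$ is finitely generated (quotient of the finitely generated module $F_*V$) with relations generated by the single element $1 \in F_*V$, i.e., exactly your two-term resolution. (The paper additionally sketches a more direct route—since $1 \notin mF_*V$ and $F_*V$ is free of finite rank over the local ring $V$, the element $1$ extends to a basis, so $V\cdot 1 = F(V)$ is a free direct summand—but your version matches the paper's alternative proof precisely.)
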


\begin{proof} One of  the rank one free summands of $F_*V$ is the copy of $V$ under $F$, so this copy of $V$ splits off $F_*V$. Alternatively, since $V\rightarrow F_*V$ is pure, we can use Lemma \ref{splitvspure}: the cokernel of $V\rightarrow F_*V$ is finitely presented because it is finitely generated (being a quotient of the finitely generated $V$-module $F_*V$) 
and the module of relations is finitely generated (by  $1 \in F_*V$). 
\end{proof}

\begin{remark}\label{Linquan2}
The same argument shows that any module finite extension $V\hookrightarrow S$ splits---in other words, every valuation ring is a {\it splinter} in the sense of \cite{Ma}; see also \cite[Lemma 1.2]{Ma}. 
\end{remark}

{}

\subsection{Frobenius Splitting in the Noetherian case.}\label{FsplitExcellent}
We can say more for  Noetherian valuation rings. First we make a general observation about F-finiteness in Noetherian rings:

\begin{theorem}\label{Noeth} For a  Noetherian domain whose fraction field is F-finite,  Frobenius splitting implies F-finiteness (and hence excellence).
\end{theorem}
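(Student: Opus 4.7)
The plan is to translate the Frobenius splitting into an $R^p$-linear retraction $\Psi \colon R \twoheadrightarrow R^p$ with $\Psi|_{R^p} = \operatorname{id}_{R^p}$: concretely, $\Psi(x) := \phi(x)^p$, where $\phi \colon F_*R \to R$ is the given splitting. Because every nonzero element of $R$ has its $p$-th power in $R^p \setminus \{0\}$, the canonical map $R \otimes_{R^p} K^p \to K$ is an isomorphism, so $\Psi$ extends by base change to a $K^p$-linear projection $\tilde\Psi \colon K \twoheadrightarrow K^p$ fixing $K^p$ pointwise.

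Next I would introduce the symmetric $K^p$-bilinear form $B \colon K \times K \to K^p$ defined by $B(x,y) := \tilde\Psi(xy)$. Its nondegeneracy is immediate: for $x \neq 0$, $xK = K$, so $B(x,\cdot) \equiv 0$ would force $\tilde\Psi(K) = 0$, contradicting $\tilde\Psi(1) = 1$. Now choose a $K^p$-basis $y_1, \ldots, y_n$ of $K$ (with $n = [K:K^p] < \infty$ by hypothesis) and clear denominators to arrange $y_i \in R$. The Gram matrix $M := (\Psi(y_i y_j))_{i,j}$ then has entries in $R^p$ (since $y_i y_j \in R$) and is invertible over $K^p$ by nondegeneracy of $B$; set $\delta := \det M \in R^p \setminus \{0\}$.

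The crux is the dual-basis computation. For any $x \in R$, write $x = \sum_i a_i y_i$ with $a_i \in K^p$; then the equations $\Psi(xy_j) = \sum_i a_i M_{ij}$ all lie in $R^p$, and applying $M^{-1} = \delta^{-1}\operatorname{adj}(M)$ (the adjugate having entries in $R^p$) yields $\delta a_i \in R^p$ for each $i$. Hence $\delta x \in \sum_i R^p y_i$, so $R \subseteq \delta^{-1}\sum_i R^p y_i$, a finitely generated $R^p$-submodule of $K$. Since $R$ is a domain, Frobenius gives an isomorphism $R \xrightarrow{\sim} R^p$ of rings, so $R^p$ is Noetherian; therefore its submodule $R$ is finitely generated over $R^p$, i.e.\ $R$ is F-finite. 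Excellence then follows from Proposition~\ref{F-finiteExcellent}.

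The main leap I expect is extracting the nondegenerate bilinear form $B$ on $K/K^p$ from a single Frobenius splitting, together with arranging for the basis and Gram matrix to live in $R$ and $R^p$ respectively; once that is in place, the determinant-and-adjugate inversion is a routine dual-basis argument, and the Noetherian hypothesis enters only at the very last step to force submodules of finitely generated modules to be finitely generated.
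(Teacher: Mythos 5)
Your argument is correct, but it takes a genuinely different route from the paper's. The paper reduces Theorem~\ref{Noeth} to Lemma~\ref{F-finiteFSplit} and proves that lemma by a reflexivity (double-dual) argument: it embeds $R$ into $R^{\vee\vee} = \operatorname{Hom}_{R^p}(\operatorname{Hom}_{R^p}(R,R^p),R^p)$ using the given $\phi$, then bounds $R^\vee$ by dualizing $0 \to M \to R \to R/M \to 0$ (with $M$ a maximal free $R^p$-submodule) and invoking that the dual of a torsion module vanishes, so $R^\vee \hookrightarrow M^\vee$ is finitely generated; Noetherianity is then applied to the submodule $R \subset R^{\vee\vee}$. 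You instead run the classical ``discriminant'' argument used to prove finiteness of integral closure in a finite \emph{separable} extension: extend the splitting to a $K^p$-linear retraction $\tilde\Psi$, get a nondegenerate symmetric $K^p$-bilinear form $B(x,y)=\tilde\Psi(xy)$, pick a basis of $K/K^p$ inside $R$, and use the invertibility of the Gram matrix over $K^p$ together with the adjugate identity to trap $R$ inside the explicit finitely generated $R^p$-module $\delta^{-1}\sum_i R^p y_i$. Both approaches use Noetherianity at exactly the same spot (submodule of a finitely generated module over a Noetherian ring); what yours buys is a more concrete, self-contained embedding of $R$ into a visibly finite module and a direct parallel with the separable-extension finiteness theorem (here the splitting plays the role that nondegeneracy of the trace form plays in the separable case), at the cost of some linear-algebra bookkeeping (arranging the basis in $R$, the Gram matrix in $R^p$, symmetry of $M$). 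Note also that your argument, like the paper's lemma, only needs $\phi(1)\neq 0$ rather than $\phi(1)=1$: nondegeneracy of $B$ follows from $\tilde\Psi(1)=\phi(1)\neq 0$ alone, so you have in effect reproved Lemma~\ref{F-finiteFSplit} in full.
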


{}
Before embarking on the proof, we point out a consequence for valuation rings:
\begin{corollary}\label{DVRFsplitChar} For a  discrete valuation ring $V$ whose fraction field is F-finite, the following are equivalent:
\begin{itemize}
\item[(i)] $V$ is Frobenius split;
\item[(ii)]  $V$ is F-finite;
\item[(iii)]  $V$ is excellent.
\end{itemize}
\end{corollary}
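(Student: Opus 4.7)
The corollary is a direct assembly of results that are either already established in the excerpt or are stated just above it. The plan is to verify the cycle of implications $(ii)\Rightarrow(i)\Rightarrow(ii)$ together with the equivalence $(ii)\Leftrightarrow(iii)$, none of which require any work beyond invoking the appropriate earlier statement. Since a DVR is, in particular, both a Noetherian domain and a valuation ring, all the previously-developed machinery applies.

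First I would note that $(ii)\Rightarrow(i)$ is immediate from Corollary \ref{F-finiteness implies F-split}, which tells us that any F-finite valuation ring is Frobenius split. Next, I would establish $(ii)\Leftrightarrow(iii)$ by citing Proposition \ref{F-finiteExcellent}: for a Noetherian domain whose fraction field is F-finite (the latter being part of our standing hypothesis), F-finiteness of the ring and excellence are equivalent. Finally, $(i)\Rightarrow(ii)$ follows from Theorem \ref{Noeth}, which asserts precisely that a Frobenius split Noetherian domain with F-finite fraction field is F-finite. Chaining these yields the three-way equivalence.

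The main obstacle in this corollary is therefore not located in the corollary itself but rather in Theorem \ref{Noeth}, which supplies the non-trivial implication $(i)\Rightarrow(ii)$. Given the structure of the excerpt, the role of the corollary is simply to specialize that theorem to DVRs and combine it with the two easier implications already available. So the write-up will be a short, three-line chain of citations, with no additional calculation required.
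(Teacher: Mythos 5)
Your proposal is correct and matches the paper's own proof exactly: both establish $(ii)\Rightarrow(i)$ via Corollary \ref{F-finiteness implies F-split}, $(i)\Rightarrow(ii)$ via Theorem \ref{Noeth}, and $(ii)\Leftrightarrow(iii)$ via Proposition \ref{F-finiteExcellent}, noting that a DVR is a Noetherian valuation ring so all three results apply.
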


\begin{proof}[Proof of Corollary]
A DVR is Noetherian, so equivalence of (i) and (ii) follows from combining Theorem \ref{Noeth} and Corollary \ref{F-finiteness implies F-split}. The equivalence with excellence follows from Proposition  \ref{F-finiteExcellent}.
\end{proof}

\begin{remark} We have proved that all valuation rings are F-pure. However, not all valuation rings, even discrete ones on $\mathbb F_p(x,y)$, are Frobenius split, as 
 Example \ref{A discrete valuation ring which is not F-finite} below shows. \end{remark}
 
The  proof of Theorem \ref{Noeth} relies on the following lemma:

\begin{lemma} \label{F-finiteFSplit}
A Noetherian domain with F-finite fraction field is F-finite if and only there exists $\phi \in Hom_{R^p}(R, R^p)$ such that $\phi(1) \neq 0$.
\end{lemma}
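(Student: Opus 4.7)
The plan is to handle the two directions separately: the forward direction is essentially a denominator-clearing exercise, while the backward direction---where the real work lies---hinges on the fact that $\mathrm{Hom}_{K^p}(K, K^p)$ is a free rank-one module over $K$ (using that $[K:K^p]<\infty$).

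For the forward direction, assuming $R$ is F-finite (so $R$ is finitely generated over $R^p$), I would choose a $K^p$-basis of $K$ containing $1$ and let $\psi: K \to K^p$ be the corresponding coordinate functional, so $\psi(1)=1$. Restricting $\psi$ to $R$ gives an $R^p$-linear map into $K^p$; since $R$ has finitely many $R^p$-module generators, there exists $B \in R^p \setminus \{0\}$ with $B\psi(R) \subseteq R^p$. Then $\phi := B\psi$ lies in $\mathrm{Hom}_{R^p}(R,R^p)$ and $\phi(1)=B \ne 0$.

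For the backward direction, I start by localizing $\phi$ at $R^p \setminus \{0\}$ to obtain a non-zero $K^p$-linear map $\tilde\phi: K \to K^p$. Because $[K:K^p]<\infty$, $\mathrm{Hom}_{K^p}(K,K^p)$ is a free $K$-module of rank one (dimension count over $K^p$), so $\tilde\phi$ is a generator; equivalently, the bilinear form $(x,y) \mapsto \tilde\phi(xy)$ on $K$ is non-degenerate. Choose a $K^p$-basis $e_1,\dots,e_n$ of $K$ with $e_i \in R$ (scale any basis by $p$-th powers in $R$ to clear denominators), and let $e_1^*,\dots,e_n^* \in K$ be its dual basis with respect to this form, so $\tilde\phi(e_i e_j^*) = \delta_{ij}$. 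A second denominator-clearing step (take $b = d^p$ for $d \in R$ killing all the denominators of the $e_j^*$) produces $b \in R^p \setminus \{0\}$ with $b e_j^* \in R$ for every $j$. Now for arbitrary $r \in R$, write $r = \sum_i c_i e_i$ with $c_i \in K^p$; then $K^p$-linearity of $\tilde\phi$ gives $\tilde\phi(r \cdot b e_j^*) = b c_j$, while $r \cdot b e_j^* \in R$ forces this value to equal $\phi(r \cdot b e_j^*) \in R^p$. So $b c_j \in R^p$, whence $br = \sum_j (bc_j) e_j \in \sum_j R^p e_j$. Thus multiplication by $b$ embeds $R$ (as an $R^p$-module) into the finitely generated $R^p$-module $\sum_j R^p e_j$; since $R^p \cong R$ is Noetherian, $bR$ is finitely generated over $R^p$, and multiplication by $b$ is an $R^p$-linear bijection $R \to bR$ because $R$ is a domain and $b \ne 0$. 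Hence $R$ is F-finite.

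The main obstacle I anticipate is the backward direction: converting the single piece of data ``$\phi(1) \ne 0$'' into an actual finiteness statement for $R$ over $R^p$. The key input is the rank-one freeness of $\mathrm{Hom}_{K^p}(K, K^p)$ over $K$, which promotes any non-zero $\tilde\phi$ to a ``non-degenerate trace form'' admitting a dual basis---this is what allows a single functional to control every coordinate of every element of $R$. The only technical wrinkle is arranging both the basis $\{e_i\}$ and a common multiplier of the dual basis $\{e_j^*\}$ to lie in $R$ (resp.\ in $R^p$), which the $p$-th power trick on denominators handles routinely.
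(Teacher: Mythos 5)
Your proof is correct, and the backward direction takes a genuinely different route from the paper's. The paper passes through the double dual: it shows the canonical map $R \to R^{\vee\vee} := \operatorname{Hom}_{R^p}(\operatorname{Hom}_{R^p}(R,R^p),R^p)$ is injective (using $f = \phi \circ x^{p-1}$ to separate points), then bounds $R^\vee$ by choosing a maximal free $R^p$-submodule $M \subset R$ and dualizing $0 \to M \to R \to R/M \to 0$ to obtain $R^\vee \hookrightarrow M^\vee$, a finitely generated module; Noetherianness then finishes. You instead promote $\phi$ to a non-degenerate $K^p$-bilinear form $(x,y)\mapsto \tilde\phi(xy)$ on $K$, take a basis $\{e_i\}\subset R$ with dual basis $\{e_j^*\}$, clear denominators to get $b\in R^p$ with $be_j^*\in R$, and show directly that $bR \subseteq \sum_j R^p e_j$. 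Conceptually the two arguments use the same two inputs---the nonzero functional $\phi$ yields a non-degenerate pairing, and the Noetherian hypothesis converts an embedding into a finitely generated module into finite generation of $R$ itself---but your version is the more concrete and self-contained one, exhibiting an explicit scalar $b$ and an explicit finite free $R^p$-module containing $bR$, at the modest cost of invoking the dual-basis formalism, whereas the paper's is shorter once one is comfortable with the abstract double-dual injectivity step.
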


\begin{proof}[Proof of Lemma] 
Assuming such $\phi$ exists, we first observe that the canonical map to the double dual
$$
R \rightarrow R^{\vee\vee}:= \operatorname{Hom}_{R^p}(\operatorname{Hom}_{R^p}(R, R^p), R^p) 
$$
is injective.
Indeed, let $x \in R$ be a non-zero element. It suffices to show that there exists $f \in R^{\vee}:= \operatorname{Hom}_{R^p}(R, R^p)$ such that $f(x) \neq 0$. Let $f = \phi \circ x^{p-1}$, where $x^{p-1}$ is the $R^p$-linear map $R\rightarrow R$ given by multiplication by $x^{p-1}$.
 Then $f(x) = \phi(x^p) = x^p \phi(1) \neq 0$. This shows that the double dual map is injective.

 Now, to show that $R$ is a finitely generated $R^p$-module, it suffices to show that the larger module  $R^{\vee\vee} $ is finitely generated. For this it suffices to show that 
  $R^{\vee}$ is  a finitely generated $R^p$-module, since the dual of a finitely generated module is finitely generated.

We now show that $R^{\vee}$ is  finitely generated.  Let $M$ be a maximal free $R^p$-submodule of  $R$. Note that $M$ has finite rank (equal to $[K:K^p]$, where $K$ is the fraction field of $R$) and that $R/M$ is a torsion $R^p$-module. 
Since the dual of a torsion module is zero, dualizing the exact sequence $0 \rightarrow M \rightarrow R \rightarrow R/M \rightarrow 0$ induces an injection
$$
R^{\vee} := \operatorname{Hom}_{R^p}(R, R^p)\hookrightarrow \operatorname{Hom}_{R^p}(M, R) =  M^{\vee}.
$$ Since $M$ is a  finitely generated $R^p$-module, also $M^{\vee}$, and hence its submodule $R^{\vee}$ is finitely generated ($R$ is Noetherian). This completes the proof that $R$ is F-finite.

For the converse, fix any $K^p$-linear splitting $\psi: K \rightarrow  K^p$. Restricting to $R$ produces an $R^p$-linear map to $K^p$. Since $R$ is finitely generated over $R^p$, we can multiply by some non-zero element $c$ of $R^p$ to produce a non-zero map $\phi:R\rightarrow R^p$ such that $\phi(1) = c \neq 0$.  The proof of Lemma \ref{F-finiteFSplit} is complete.
\end{proof}

\begin{proof}[Proof of Theorem \ref{Noeth}]  Let $R$ be a domain  with F-finite fraction field. A
 Frobenius splitting is a map $\phi\in \operatorname{Hom}_{R^p}(R, R^p)$ such that $\phi(1) = 1$.  Theorem \ref{Noeth}  then follows immediately from Lemma \ref{F-finiteFSplit}.
 \end{proof}




\subsection{A numerical Criterion for F-finiteness} 
 Consider the extension 
$$K^p \subseteq K$$
where $K$ any field of characteristic $p$.
For any valuation $v$ on $K$, let $v^p$ denote the restriction to  $K^p$.   
 We next characterize F-finite valuations in terms of the ramification index and residue degree of $v$ over $v^p$:
   
{}
   
\begin{theorem}
\label{F-finiteness in terms of ramification and residue degree}
A valuation ring $V$ of an F-finite field $K$  of prime characteristic $p$  is F-finite if and only if $$e(v/v^p)f(v/v^p) = [K:K^p],$$ where $v$ is the corresponding valuation on $K$ and $v^p$ is its restriction to $K^p$. 
\end{theorem}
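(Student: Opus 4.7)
The approach is to apply Proposition \ref{key theorem for F-finiteness results} to the finite field extension $K^p \subseteq K$, whose degree $[K:K^p]$ is finite precisely because $K$ is F-finite. To use it, I must identify all extensions of $v^p$ to $K$ and recognize the integral closure of $R_{v^p}$ in $K$ as the object governing F-finiteness of $V$.

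The first step is to show that $v$ is the unique (up to equivalence) extension of $v^p$ to $K$. The key observation is that for any valuation $w$ on $K$ and any nonzero $x \in K$, one has $x \in R_w$ iff $x^p \in R_w$, because $w(x^p) = p \cdot w(x)$ and the ordered abelian group $\Gamma_w$ is torsion-free (so $p\cdot w(x) \geq 0$ forces $w(x) \geq 0$). Applied to both $v$ and an arbitrary extension $w$ of $v^p$, one obtains
\[ x \in R_v \iff x^p \in R_{v^p} \iff x \in R_w, \]
so $R_v = R_w$ and hence $v$ and $w$ are equivalent. Thus the set $\mathcal S$ in Proposition \ref{key theorem for F-finiteness results} reduces to $\{v\}$, and the inequality there becomes $e(v/v^p) f(v/v^p) \leq [K:K^p]$.

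Next I would translate Bourbaki's equality condition into the F-finiteness of $V$. Using the same torsion-free argument, $V \cap K^p = V^p$: indeed $y = x^p \in K^p$ lies in $V$ iff $p \cdot v(x) \geq 0$ iff $v(x) \geq 0$, so $V \cap K^p$ consists exactly of $p$-th powers of elements of $V$, and this coincides with $R_{v^p}$. Moreover, $V$ is the integral closure of $V^p$ in $K$: every $x \in V$ satisfies $T^p - x^p = 0$ with coefficients in $V^p$, establishing integrality, while $V$ is integrally closed in $K$ as it is a valuation ring. Consequently, the integral closure of $R_{v^p}$ in $K$ is finitely generated as an $R_{v^p}$-module precisely when $V$ is finitely generated as a $V^p$-module, i.e., when $V$ is F-finite.

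The main obstacle I anticipate is the uniqueness of the extension of $v^p$ in the first step; once this is established, the identifications $V^p = R_{v^p}$ and $V = $ integral closure of $V^p$ in $K$ are essentially bookkeeping, and the claimed equivalence follows directly from Proposition \ref{key theorem for F-finiteness results}.
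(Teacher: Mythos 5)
Your proposal is correct and follows essentially the same route as the paper's proof: establish that $v$ is the unique extension of $v^p$ to $K$ using the torsion-freeness of the value group, identify $V^p = R_{v^p}$ and $V$ as the integral closure of $V^p$ in $K$, and then invoke the Bourbaki proposition on the sum $\sum e(w_i/v)f(w_i/v)$. The only cosmetic difference is that you argue uniqueness via the equality of valuation rings $R_w = R_v$ rather than directly saying $v$ is determined by its values on $K^p$, but the underlying idea (dividing by $p$ in a torsion-free ordered group) is identical.
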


\begin{proof}
{}

First note that  $v$ is the only valuation of $K$ extending $v^p$. Indeed, $v$ is uniquely determined by its values on elements of $K^p$, since $v(x^p) = p v(x)$ and the value group of $v$ is torsion-free. Furthermore, the valuation ring of $v^p$ is easily checked to be $V^p$.

Observe that  $V$ is the integral closure of $V^p$ in $K$.  Indeed, since $V$ is a valuation ring, it is integrally closed in $K$, but it is also obviously integral over $V^p$.  
We now apply Lemma \ref{key theorem for F-finiteness results}.  Since there is only one valuation extending $v^p$,  the  inequality 
$$e(v/v^p)f(v/v^p) \leq [K:K^p]$$
will be an equality if and only if the integral closure of $V^p$ in $K$, namely $V$,  is finite over $V^p$. 
\end{proof}

{}

  The following  simple consequence has useful applications to the construction of interesting examples of F-finite  and non F-finite valuations:

{}

\begin{corollary}
\label{corollary to F-finiteness in terms of ramification and residue degree}
Let  $V$ be a valuation ring of an $F$-finite field $K$ of characteristic $p$. If $e(v/v^p)$ = $[K:K^p]$ or $f(v/v^p)$ = $[K:K^p]$, then $V$ is $F$-finite.
\end{corollary}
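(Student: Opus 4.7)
The plan is to deduce this as an immediate consequence of Theorem \ref{F-finiteness in terms of ramification and residue degree} combined with the general inequality (\ref{ramification/residue for finite extensions is finite}) applied to the finite extension $K^p \subseteq K$. Since $[K:K^p]$ is finite by hypothesis (F-finiteness of $K$), the inequality gives
\[
e(v/v^p)\, f(v/v^p) \leq [K:K^p].
\]
The theorem says that $V$ is F-finite exactly when this inequality is an equality.

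So the only remaining point is to observe that both $e(v/v^p)$ and $f(v/v^p)$ are positive integers, each at least $1$. Indeed, $f(v/v^p)$ is the degree of the residue field extension $\kappa(v^p) \hookrightarrow \kappa(v)$, which is at least $1$, and $e(v/v^p)$ is the index $[\Gamma_v : \Gamma_{v^p}]$, also at least $1$.

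Now if $e(v/v^p) = [K:K^p]$, then $e(v/v^p) f(v/v^p) \geq [K:K^p] \cdot 1 = [K:K^p]$. Combined with the reverse inequality, this forces
\[
e(v/v^p)\, f(v/v^p) = [K:K^p],
\]
and the theorem yields F-finiteness. The case $f(v/v^p) = [K:K^p]$ is symmetric. There is no real obstacle here; the proof is essentially a one-line consequence of the theorem plus the trivial lower bound $e, f \geq 1$.
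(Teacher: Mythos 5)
Your proof is correct and is precisely the obvious argument; the paper omits a proof entirely, treating the corollary as immediate from Theorem \ref{F-finiteness in terms of ramification and residue degree}. Your observation that $e, f \geq 1$ together with the inequality $e(v/v^p) f(v/v^p) \leq [K:K^p]$ forces equality when either factor already attains $[K:K^p]$ is exactly the intended reasoning.
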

\vspace{1mm}

\begin{remark}
  Theorem \ref{F-finiteness in terms of ramification and residue degree} and its corollary are easy to apply, because the
   ramification index and residue degree for the extension $V^p \hookrightarrow V$ can be computed  in practice. Indeed,
      since $\Gamma_{v^p}$ is clearly the subgroup  $p\Gamma_v$ of $\Gamma_v$, we see that  
   \begin{equation}
   \label{ramindex}
   e(v/v^p) = [\Gamma_v:p\Gamma_v].
   \end{equation}  
   Also, the local map $V^p \hookrightarrow V$ induces the residue field extension $\kappa(v^p) \hookrightarrow \kappa(v)$, which identifies the field $\kappa(v^p) $ with the subfield 
   $(\kappa(v))^p.$ This means that 
   \begin{equation}
   \label{resdegree}
   f(v/v^p) = [\kappa(v):\kappa(v)^p].
   \end{equation}

{}

\end{remark}

{}

\subsection{Examples of Frobenius Split Valuations} We can use our characterization of F-finite valuations to easily give examples of valuations on $\mathbb F_p(x, y)$ that are non-discrete but Frobenius split.
\begin{example}
Consider the  rational function field $K=k(x, y)$ over a perfect field  $k$ of characteristic $p$.  For an   irrational number $\alpha \in \mathbb{R}$, let $\Gamma$ be the ordered additive subgroup of $\mathbb R$ generated by $1$ and $\alpha$. Consider the unique valuation  $v:K^{\times}\rightarrow \Gamma$ 
 determined by
$$v(x^iy^j ) =  {i + j\alpha },$$ and let  $V$ be the corresponding valuation ring.  
Since $\Gamma \cong \mathbb{Z} \oplus \mathbb{Z}$ via the map which sends $a + b\alpha \mapsto (a, b)$, we see that the value group of $v^p$ is $p\Gamma \cong p(\mathbb{Z} \oplus \mathbb{Z})$. Hence 
$$e(v/v^p) = [\Gamma: p\Gamma] = p^2 = [K:K^p].$$ 
So $V$ is F-finite by Corollary  \ref{corollary to F-finiteness in terms of ramification and residue degree}. Thus $V$ is also Frobenius split by Corollary \ref{F-finiteness implies F-split}.
\end{example}

\begin{example}\label{lex}
  Consider the lex valuation on the rational function field  $K = k(x_1, x_2, \dots, x_n)$ over a perfect field  $k$ of characteristic $p$. This is the valuation $v:K^{\times}\rightarrow \mathbb Z^n$  on $K/k$ defined by 
sending a monomial $x_1^{a_1}\dots x_n^{a_n}$ to  $(a_1, \dots, a_n) \in \mathbb Z^{\oplus n}$, where $\Gamma =  \mathbb{Z}^{\oplus n}$  is ordered lexicographically.  Let $V$ be the corresponding valuation ring. The value group of $V^p$ is $p\Gamma$,  so $e(v/v^p) = [\Gamma:p\Gamma] = p^n = [K:K^p]$. As in the previous example,  Corollary  \ref{corollary to F-finiteness in terms of ramification and residue degree} implies that $V$ is $F$-finite, and so again F-split. 
\end{example}

{}
\subsection{Example of a non-Frobenius Split Valuation}
 Our next example shows that discrete valuation rings are not always F-finite, even in the rational function field $\mathbb F_p(x,y)$. This is adapted from \cite[Example on pg 62]{Z-S}, where it is credited to F. K. Schmidt.

\begin{example}
\label{A discrete valuation ring which is not F-finite}
Let $\mathbb{F}_p((t))$ be the fraction field of the discrete valuation ring  $\mathbb{F}_p[[t]]$  of power series in one variable.  
Since the field of rational functions $\mathbb{F}_p(t)$ is countable, the uncountable field $\mathbb{F}_p((t))$ can not be algebraic over $\mathbb{F}_p(t)$.
 So we can find  some power series $$f(t) = \sum_{n=1}^{\infty} a_nt^n$$ in $\mathbb{F}_p[[t]]$  transcendental over $\mathbb{F}_p(t)$. 
{}

  Since $t$ and $f(t) $ are algebraically independent, there is an {\it injective} ring map $$\mathbb{F}_p[x,y] \hookrightarrow \mathbb{F}_p[[t]]\,\,\,\, {\text{ sending}}\,\,\,x \mapsto t \,\,\,\, {\text{and}}\,\,\,\,\, y \mapsto f(t)$$  which induces an extension of fields $$\mathbb{F}_p(x,y) \hookrightarrow \mathbb{F}_p((t)).$$ 
Restricting the $t$-adic valuation  on $\mathbb{F}_p((t))$ to  the subfield $\mathbb{F}_p(x,y)$ produces a discrete valuation  $v$ of $\mathbb F_p(x,y)$. Let $V$ denote its valuation ring.
{}

   We claim that   $V$ is not F-finite, a statement we can verify with Theorem \ref{F-finiteness in terms of ramification and residue degree}.
Note that $L= \mathbb{F}_p(x,y)$ is F-finite, with  $[L:L^p] =  p^2$.  Since the value group $\Gamma_v$ is $\mathbb{Z}$, we see that $$e(v/v^p) = [\Gamma_v:p\Gamma_v] = p.$$ 
On the other hand, to compute the residue degree $f(v/v^p)$, we must understand the field extension $\kappa(v)^p \hookrightarrow \kappa(v)$.  Observe that
for an element $u\in \mathbb F_p(x,y)$ to be in $V$, its image in $\mathbb F_p((t))$ must be a power series of the form $$\sum_{n=0}^{\infty}b_nt^n$$ where $b_n \in \mathbb{F}_p$. Clearly 
$$v(u-b_0) > 0$$ which means that the class of  $u = \sum_{n=0}^{\infty}b_nt^n$ in $\kappa(v)$ is equal to the class of $b_0$ in $\kappa(v)$. This implies that $\kappa(v) \cong \mathbb{F}_p,$ so that $ [\kappa(v):\kappa(v)^p] = 1$. That is,  $f(v/v^p)  =1$.

{}

  Finally, we then have that 
 $$e(v/v^p)f(v/v^p) = p \neq p^2 = [\mathbb{F}_p(x,y) :(\mathbb{F}_p(x,y) )^p].$$ So $V$ \textbf{cannot} be F-finite by  Theorem \ref{F-finiteness in terms of ramification and residue degree}. Thus this Noetherian ring is neither Frobenius split nor excellent by Corollary \ref{DVRFsplitChar}.

\end{example}

\vspace{1mm}

\subsection{Finite Extensions}  Frobenius properties of valuations are largely preserved under finite extension. First note that 
 if $K\hookrightarrow L$ is a finite extension of F-finite fields, then $[L:L^p] = [K:K^p]$;  this follows immediately from the 
 commutative diagram of fields
$$\xymatrix{
L  &K\ar@{_{(}->}[l]\\
L^p \ar@{_{(}->}[u] & K^p.\ar@{_{(}->}[l] \ar@{_{(}->}[u]
}$$
To wit,  $  [L:K^p] = [L:K][K:K^p] = [L:L^p][L^p:K^p]$ and $ [L:K] = [L^p:K^p]$, so  that $[L:L^p] = [K:K^p]$.
Moreover,

\begin{proposition}
\label{F-finiteness preserved under extensions}
Let $K\hookrightarrow L$ be a finite extension of F-finite fields of characteristic $p$.  Let $v$ be a valuation on $K$ and $w$ an extension of $v$ to $L$. Then:
\begin{enumerate}
\item[(i)] The ramification indices $e(v/v^p)$ and $e(w/w^p)$ are equal. 
\item[(ii)] The residue degrees $f(v/v^p)$ and $f(w/w^p)$ are equal.
\item[(iii)] The valution ring for $v$ is $F$-finite if and only if the valuation ring for $w$ is $F$-finite.
\end{enumerate}
\end{proposition}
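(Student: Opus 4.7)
The plan is to reduce all three statements to the numerical formulas \eqref{ramindex} and \eqref{resdegree} of the preceding remark. Statement (i) becomes the equality $[\Gamma_v : p\Gamma_v] = [\Gamma_w : p\Gamma_w]$, and statement (ii) becomes $[\kappa(v):\kappa(v)^p] = [\kappa(w):\kappa(w)^p]$. Once (i) and (ii) are in hand, (iii) follows immediately from Theorem \ref{F-finiteness in terms of ramification and residue degree} combined with the equality $[K:K^p]=[L:L^p]$ already noted before the statement of the proposition.

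For (i), the main input is Proposition \ref{key theorem for F-finiteness results} applied to $K\hookrightarrow L$: the quotient $\Gamma_w/\Gamma_v$ is a finite abelian group of order $e(w/v)$. Since $\Gamma_v$ and $\Gamma_w$ are torsion-free, tensoring the short exact sequence $0\to \Gamma_v \to \Gamma_w \to \Gamma_w/\Gamma_v \to 0$ with $\mathbb Z/p\mathbb Z$ produces
$$0 \to \operatorname{Tor}_1^{\mathbb Z}(\Gamma_w/\Gamma_v,\mathbb Z/p) \to \Gamma_v/p\Gamma_v \to \Gamma_w/p\Gamma_w \to (\Gamma_w/\Gamma_v)\otimes \mathbb Z/p \to 0.$$
For any finite abelian group $A$, the structure theorem yields $|A\otimes \mathbb Z/p| = |A[p]| = |\operatorname{Tor}_1^{\mathbb Z}(A,\mathbb Z/p)|$; applied to $A=\Gamma_w/\Gamma_v$, this forces the two outer terms in the exact sequence above to have equal (finite) cardinality, whence $|\Gamma_v/p\Gamma_v| = |\Gamma_w/p\Gamma_w|$. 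The finiteness of these indices in the first place is guaranteed because $e(v/v^p)f(v/v^p)\le [K:K^p]<\infty$ by Proposition \ref{key theorem for F-finiteness results} applied to $K^p\subseteq K$ (for which $v$ is the unique extension of $v^p$, as noted in the proof of Theorem \ref{F-finiteness in terms of ramification and residue degree}).

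For (ii), the extension $\kappa(v)\hookrightarrow \kappa(w)$ is finite of degree $f(w/v)$, again by Proposition \ref{key theorem for F-finiteness results}. The bound $f(v/v^p)\le [K:K^p]<\infty$ shows $\kappa(v)$ is F-finite, and the same bound for $w$ (or simply finiteness of $\kappa(v)\hookrightarrow\kappa(w)$) shows $\kappa(w)$ is F-finite. Now invoke the identity $[L':(L')^p]=[K':(K')^p]$ for finite extensions of F-finite fields, already established at the start of this subsection, applied to $\kappa(v)\hookrightarrow \kappa(w)$.

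The main obstacle is really (i): the map $\Gamma_v \hookrightarrow \Gamma_w$ can have index divisible by $p$ (wild ramification), so one cannot naively divide indices. The Tor long exact sequence together with the identity $|A\otimes \mathbb Z/p|=|\operatorname{Tor}_1^{\mathbb Z}(A,\mathbb Z/p)|$ for finite abelian $A$ is precisely what neutralizes this potential wild behavior and delivers the equality.
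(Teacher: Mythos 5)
Your proof is correct, and part (i) takes a genuinely different route from the paper. Where you pass to the Tor long exact sequence obtained by tensoring $0\to\Gamma_v\to\Gamma_w\to\Gamma_w/\Gamma_v\to0$ with $\mathbb Z/p$, use torsion-freeness to kill $\operatorname{Tor}_1(\Gamma_v,\mathbb Z/p)$ and $\operatorname{Tor}_1(\Gamma_w,\mathbb Z/p)$, and invoke the identity $|A\otimes\mathbb Z/p|=|A[p]|$ for finite $A$ to cancel the outer terms, the paper argues more elementarily: since $\Gamma_w$ is torsion-free, multiplication by $p$ is an isomorphism $\Gamma_w\xrightarrow{\sim}p\Gamma_w$ carrying $\Gamma_v$ onto $p\Gamma_v$, so $[p\Gamma_w:p\Gamma_v]=[\Gamma_w:\Gamma_v]$; it then reads off $[\Gamma_w:p\Gamma_w]=[\Gamma_v:p\Gamma_v]$ from the multiplicativity of index in the square of finite-index subgroups $p\Gamma_v\subset p\Gamma_w,\Gamma_v\subset\Gamma_w$. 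Both arguments exploit the same underlying fact (torsion-freeness of the value groups) and your diagnosis that wild ramification is the potential obstruction is exactly right; the paper's index chase avoids homological machinery but otherwise buys the same thing, while your version makes the role of torsion-freeness (as vanishing of $\operatorname{Tor}_1$) more structurally visible. For (ii) your approach is essentially the paper's, except that you cite the $[L:L^p]=[K:K^p]$ computation done just before the proposition rather than redoing the degree count for $\kappa(v)\hookrightarrow\kappa(w)$, which is a sensible streamlining; and (iii) is identical.
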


\begin{proof}
By (\ref{ramification/residue for finite extensions is finite}), we have $$[\Gamma_w:\Gamma_v][\kappa(w):\kappa(v)] \leq [L:K], $$ so both $[\Gamma_w: \Gamma_v]$ and $[\kappa(w):\kappa(v)]$ are finite. Of course, we also know that the ramification indices  $e(w/w^p) = [\Gamma_w: p\Gamma_w]$ and $e(v/v^p)  = [\Gamma_v: p\Gamma_v]$ are finite, as are
 the residue degrees $f(w/w^p) = [\kappa(w):\kappa(w)^p]$ and $f(v/v^p) = [\kappa(v):\kappa(v)^p].$

{}

  (i) In light of (\ref{ramindex}),
 we need to show that  $[\Gamma_w: p\Gamma_w] = [\Gamma_v: p\Gamma_v]$.
Since $\Gamma_w$ is torsion-free, multiplication by $p$ induces an isomorphism $\Gamma_w \cong p\Gamma_w$, under which the subgroup  $\Gamma_v$  corresponds to $p\Gamma_v$.  
Thus $[p\Gamma_w:p\Gamma_v] = [\Gamma_w: \Gamma_v]$. Using the commutative diagram of  finite index abelian subgroups
$$\xymatrix{
\Gamma_w  &\Gamma_v\ar@{_{(}->}[l]\\
p\Gamma_w \ar@{_{(}->}[u] & p\Gamma_v, \ar@{_{(}->}[l] \ar@{_{(}->}[u]
}$$ we see that  $[\Gamma_w:p\Gamma_w][ p\Gamma_w : p\Gamma_v] = [\Gamma_w:\Gamma_v] [\Gamma_v:p\Gamma_v].$ Whence $[\Gamma_w:p\Gamma_w] = [\Gamma_v:p\Gamma_v]$.

{}

  (ii) In light of (\ref{resdegree}),
we need to show that  $[\kappa(w):\kappa(w)^p] = [\kappa(v):\kappa(v)^p].$ 
We have $[\kappa(w)^p:\kappa(v)^p]  = [\kappa(w): \kappa(v)]$, so the result follows from computing the extension degrees in the 
commutative diagram of finite field extensions
$$\xymatrix{
\kappa(w)  &\kappa(v)\ar@{_{(}->}[l]\\
\kappa(w)^p \ar@{_{(}->}[u] & \kappa(v)^p. \ar@{_{(}->}[l] \ar@{_{(}->}[u]
}$$

{}

  (iii) By (i) and (ii) we get $e(w/w^p) = e(v/v^p)$ and $f(w/w^p) = f(v/v^p)$. Therefore 
$$e(w/w^p)f(w/w^p) = e(v/v^p)f(v/v^p).$$

  Since also $[L:L^p] = [K:K^p]$, we see using Theorem \ref{F-finiteness in terms of ramification and residue degree} that $w$ is $F$-finite if and only if $v$ is $F$-finite.
\end{proof}

\section{F-finiteness in Function Fields}\label{Abhyankar}

  An important class of fields are {\it function fields over a ground field $k$}. By definition, a field $K$ is a function field over $k$ if it is a finitely generated field extension of $k$. These are the fields that arise as function fields of varieties over a (typically algebraically closed) ground field $k$. What more can be said about valuation rings in this important class of fields? 

{}
  
We saw in Example \ref{A discrete valuation ring which is not F-finite}  that not every valuation of an F-finite function field is F-finite. However, the following theorem gives a nice  characterization of those that are.

\begin{Theorem}
\label{F-finiteness equivalent to Abhyankar}
Let $K$ be a finitely generated field extension of an F-finite ground field $k$.
The following are equivalent for a valuation $v$ on $K/k$:
\begin{enumerate}
\item[(i)] The valuation $v$ is Abhyankar.
\item[(ii)] The valuation ring $R_v$ is F-finite.
\item[(iii)] The valuation ring $R_v$ is a free $R_v^p$-module.
\end{enumerate}
Furthermore,  when these equivalent conditions hold, it is also true that $R_v$ is Frobenius split. 
\end{Theorem}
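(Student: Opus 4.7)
The plan is to prove (ii) $\iff$ (iii) first as a direct consequence of Theorem \ref{F-finite iff free}: since $k$ is F-finite and $K$ is a finitely generated extension of $k$, the field $K$ itself is F-finite, so that theorem applies to $V = R_v$. Once any one of (i), (ii), (iii) is established, the Frobenius split conclusion follows from Corollary \ref{F-finiteness implies F-split}. It therefore remains to prove (i) $\iff$ (ii).

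For (i) $\Rightarrow$ (ii), the strategy is to compute $e(v/v^p) f(v/v^p)$ exactly and invoke Theorem \ref{F-finiteness in terms of ramification and residue degree}. By Theorem \ref{abhyankar's theorem}, the Abhyankar hypothesis guarantees that $\Gamma_v$ is a finitely generated torsion-free abelian group, hence $\Gamma_v \cong \mathbb{Z}^r$ with $r = \operatorname{rat.rank} \Gamma_v$, and (\ref{ramindex}) gives $e(v/v^p) = p^r$. Similarly, since $\kappa(v)$ is a finitely generated extension of the F-finite field $k$, the classical formula
$$[F : F^p] = [k : k^p] \cdot p^{\operatorname{trans.deg}_k F}$$
applied to $F = \kappa(v)$, combined with (\ref{resdegree}), yields $f(v/v^p) = [k:k^p] \cdot p^{\operatorname{trans.deg}_k \kappa(v)}$. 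Multiplying and using the Abhyankar equality $r + \operatorname{trans.deg}_k \kappa(v) = \operatorname{trans.deg}_k K$ produces $e(v/v^p) f(v/v^p) = [k:k^p] \cdot p^{\operatorname{trans.deg}_k K} = [K:K^p]$, which is precisely the F-finiteness criterion of Theorem \ref{F-finiteness in terms of ramification and residue degree}.

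For the converse (ii) $\Rightarrow$ (i), I would run the same numerical computation in reverse, but now using inequalities. A torsion-free abelian group $\Gamma$ satisfies $\dim_{\mathbb{F}_p}(\Gamma/p\Gamma) \leq \operatorname{rat.rank} \Gamma$, because any elements $\mathbb{F}_p$-linearly independent modulo $p\Gamma$ lift to $\mathbb{Z}$-linearly independent elements of $\Gamma$, hence are $\mathbb{Q}$-linearly independent in $\Gamma \otimes_{\mathbb Z} \mathbb{Q}$. For the residue field, I need the classical bound $\log_p[F : F^p] \leq \log_p[k:k^p] + \operatorname{trans.deg}_k F$ for an arbitrary extension $F/k$ with $k$ F-finite, a standard $p$-basis/transcendence basis fact (\emph{cf.}\ Matsumura). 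Combining both inequalities with Theorem \ref{F-finiteness in terms of ramification and residue degree} gives
$$\operatorname{trans.deg}_k K + \log_p[k:k^p] = \log_p[K:K^p] \leq \operatorname{rat.rank} \Gamma_v + \operatorname{trans.deg}_k \kappa(v) + \log_p[k:k^p],$$
so $\operatorname{trans.deg}_k K \leq \operatorname{rat.rank} \Gamma_v + \operatorname{trans.deg}_k \kappa(v)$, which together with Abhyankar's inequality (\ref{Abineq}) forces equality and proves $v$ is Abhyankar.

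The main obstacle I anticipate is the general field-theoretic inequality $[F:F^p] \leq [k:k^p] \cdot p^{\operatorname{trans.deg}_k F}$ used in step (ii) $\Rightarrow$ (i), since $\kappa(v)/k$ need not be finitely generated when $v$ is arbitrary; everything else is essentially bookkeeping once Theorems \ref{F-finite iff free} and \ref{F-finiteness in terms of ramification and residue degree} are in hand.
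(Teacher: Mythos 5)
Your proposal follows essentially the same route as the paper: (ii)~$\Leftrightarrow$~(iii) via Theorem~\ref{F-finite iff free}, the Frobenius-splitting conclusion via Corollary~\ref{F-finiteness implies F-split}, and (i)~$\Leftrightarrow$~(ii) via the numerical criterion of Theorem~\ref{F-finiteness in terms of ramification and residue degree} together with the bounds $e(v/v^p) \leq p^{\operatorname{rat.rank}\Gamma_v}$ and $f(v/v^p) \leq [k:k^p]\,p^{\operatorname{trans.deg}_k\kappa(v)}$ (with equality in the finitely generated case), which are exactly Propositions~\ref{ebound} and~\ref{fbound}. The one step you flag but leave as a citation --- the residue-field inequality for a not-necessarily-finitely-generated $\kappa(v)$ --- is precisely what the paper proves via Lemma~\ref{generalization of a previous result}, by factoring $k \hookrightarrow k(x_1,\dots,x_t) \hookrightarrow \kappa(v)$ and showing an algebraic extension of F-finite fields cannot increase $[L:L^p]$; otherwise the two arguments coincide.
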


Since Abhyankar valuations have finitely generated value groups and residue fields, the following corollary holds.

\begin{Corollary}
\label{F-finiteness implies finitely generated value group}
An F-finite valuation of a function field over an F-finite field  $k$ has 
 finitely generated value group and its residue field is a finitely generated field extension of $k$. 
\end{Corollary}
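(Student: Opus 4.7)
The plan is to chain two results already established. The corollary is essentially a statement about Abhyankar valuations, repackaged via the equivalence in Theorem \ref{F-finiteness equivalent to Abhyankar}.

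First, I would invoke Theorem \ref{F-finiteness equivalent to Abhyankar} to replace the F-finiteness hypothesis on $R_v$ by the statement that $v$ is an Abhyankar valuation on $K/k$. This is the implication (ii) $\Rightarrow$ (i) in that theorem, and it uses precisely the standing assumptions that $K/k$ is finitely generated and $k$ is F-finite.

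Second, I would apply the ``moreover'' part of Abhyankar's Inequality (Theorem \ref{abhyankar's theorem}): once equality holds in (\ref{Abineq}), the value group $\Gamma_v$ is automatically a finitely generated abelian group, and the residue field $\kappa(v)$ is automatically a finitely generated field extension of $k$. These are exactly the two conclusions claimed by the corollary.

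Since both steps are direct citations of previous results, there is no real obstacle here; the only thing worth remarking on is that the corollary would fail without the hypothesis that $K$ is finitely generated over $k$ (as Abhyankar's theorem uses finite generation in an essential way), which is why the statement is restricted to the function field setting.
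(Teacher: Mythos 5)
Your proposal is correct and matches the paper's argument exactly: the paper also derives the corollary by combining the implication (ii) $\Rightarrow$ (i) of Theorem \ref{F-finiteness equivalent to Abhyankar} with the ``moreover'' clause of Abhyankar's Inequality (Theorem \ref{abhyankar's theorem}). Nothing further to add.
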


  For example, valuations whose value groups are $\mathbb Q$ can never be F-finite.

\begin{Remark}\label{excellence}
In light of Proposition \ref{F-finiteExcellent}, we could add a fourth  item to the list of equivalent conditions in Theorem \ref{F-finiteness equivalent to Abhyankar} in the {\it Noetherian} case: the valuation $R_v$ is excellent. The theorem says that the only discrete valuation rings (of function fields) that are F-finite are the divisorial valuation rings  or equivalently, the  excellent DVRs.
\end{Remark}

{}
   To prove Theorem \ref{F-finiteness equivalent to Abhyankar}, first recall that the equivalence of (ii) and (iii) was already established in Theorem \ref{F-finite iff free}. The point is to connect these conditions with the Abyhankar property. 
Our strategy is to use Theorem \ref{F-finiteness in terms of ramification and residue degree}, which tells us that a valuation $v$ on $K$ is F-finite if and only if 
$$e(v/v^p) f(v/v^p) = [K:K^p].$$  
 We do this by 
proving two  propositions---one comparing the rational rank of $v$ to the ramification index $e(v/v^p)$, and the other comparing the transcendence degree of $v$ to the residue degree $f(v/v^p)$.

{}

\begin{Proposition}
\label{ebound}
Let $v$ be a valuation of rational rank $s$ on an F-finite field $K$. 
Then $$ e(v/v^p) \leq p^s,$$ with equality when the value group $\Gamma_v$ is finitely generated.
\end{Proposition}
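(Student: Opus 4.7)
My plan is to reduce the proposition to a purely group-theoretic statement about torsion-free abelian groups. By the identification (\ref{ramindex}) established earlier, $e(v/v^p) = [\Gamma_v : p\Gamma_v]$, and $\Gamma_v$ is a torsion-free abelian group of rational rank $s$. Thus the claim becomes: for any torsion-free abelian group $\Gamma$ of rational rank $s$, the quotient $\Gamma/p\Gamma$ (naturally an $\mathbb{F}_p$-vector space) satisfies $\dim_{\mathbb{F}_p}(\Gamma/p\Gamma) \leq s$, with equality when $\Gamma$ is finitely generated.

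The equality statement is immediate from structure theory: a finitely generated torsion-free abelian group of rational rank $s$ is isomorphic to $\mathbb{Z}^s$, so $\Gamma/p\Gamma \cong \mathbb{F}_p^s$ and hence $[\Gamma:p\Gamma] = p^s$.

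For the inequality, I would fix any $s+1$ elements $\gamma_1,\dots,\gamma_{s+1} \in \Gamma$ and show that their images in $\Gamma/p\Gamma$ are $\mathbb{F}_p$-linearly dependent. Since $\dim_{\mathbb{Q}}(\Gamma \otimes_{\mathbb{Z}} \mathbb{Q}) = s$, there is a nontrivial $\mathbb{Q}$-linear relation among the $\gamma_i$ in $\Gamma \otimes_{\mathbb{Z}} \mathbb{Q}$; clearing denominators yields integers $n_1,\dots,n_{s+1}$, not all zero, with $\sum_i n_i \gamma_i = 0$ in $\Gamma$. The decisive step is to arrange that not every $n_i$ is divisible by $p$: if $p$ divides $\gcd(n_1,\dots,n_{s+1})$, write $n_i = p n_i'$, so that $p \cdot \sum_i n_i' \gamma_i = 0$, and then invoke torsion-freeness of $\Gamma$ to conclude $\sum_i n_i' \gamma_i = 0$. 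Iterating this finitely often (the $|n_i|$ strictly decrease at each step) produces a nontrivial integer relation in which at least one coefficient is a unit mod $p$; reducing modulo $p\Gamma$ yields the required $\mathbb{F}_p$-linear dependence among the $\bar\gamma_i$.

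The main point of the argument is this last descent, and it is precisely where torsion-freeness of $\Gamma_v$ (automatic for ordered abelian groups, as recalled in the preliminaries) gets used. Everything else is a straightforward translation between ramification indices, quotients of abelian groups, and rational ranks; no appeal to the F-finiteness of the ambient field $K$ is needed for this proposition per se, though of course that hypothesis is what makes the bound $e(v/v^p)$ meaningful in the larger context.
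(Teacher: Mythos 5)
Your proof is correct and follows essentially the same approach as the paper: both reduce to the group-theoretic statement $[\Gamma:p\Gamma]\leq p^s$ for a torsion-free abelian group $\Gamma$ of rational rank $s$, and both hinge on the same use of torsion-freeness to strip factors of $p$ from the coefficients of an integer relation so that reduction modulo $p\Gamma$ remains nontrivial. The only cosmetic difference is the direction of argument (you start with $s+1$ elements and exhibit a mod-$p$ relation; the paper takes elements whose classes are mod-$p$ independent and shows they are $\mathbb{Z}$-independent, hence at most $s$ of them), which is just the contrapositive framing of the same idea.
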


\begin{proof} 
To see that equality holds when $\Gamma_v$ is finitely generated, note that in this case, $\Gamma_v \cong \mathbb Z^{\oplus s}$. So $\Gamma_v/p\Gamma_v \cong (\mathbb Z/p\mathbb Z)^{\oplus s}$, which has cardinality $p^s$. 
That is, $e(v/v^p)  = p^s.$

{}

  It remains to consider the case where $\Gamma$ may not be finitely generated. Nonetheless, since $e(v/v^p)$ is finite (C.f. \ref{ramification/residue for finite extensions is finite}), we do know that 
$[\Gamma_v: p\Gamma_v] = e(v/v^p)$ is finite. So the proof of Proposition \ref{ebound} comes down to the following simple lemma about abelian groups:

\begin{Lemma} 
Let $\Gamma$ be a torsion free abelian group of rational rank $s$.  Then $[\Gamma:p\Gamma] \leq p^s$. 
\end{Lemma}

{}

It suffices to show that $\Gamma/p\Gamma$ is a vector space over $\mathbb{Z}/p\mathbb{Z}$ of dimension $\leq s$. For if $d$ is the dimension of $\Gamma/p\Gamma$, then $[\Gamma:p\Gamma] = p^d$.  So
let  $t_1, \dots, t_n$ be elements of $\Gamma$ whose classes modulo $p\Gamma$ are linearly independent over $\mathbb{Z}/p\mathbb{Z}$. 
 Then we claim that the $t_i$ are $\mathbb Z$-independent elements of $\Gamma$. 
  Assume to the contrary that there is some non-trivial relation 
 $a_1t_1 + \dots + a_nt_n = 0$,  for some integers $a_i$. Since $\Gamma$ is torsion-free, we can assume without loss of generality, that at least  one $a_j$ is not divisible by $p$. But now modulo $p\Gamma$, this relation produces a non-trivial relation on classes of the $t_i$ in $\Gamma/p\Gamma$, contrary to the fact that these are linearly independent. This shows that any $\mathbb{Z}/p\mathbb{Z}$-linearly independent subset of $\Gamma/p\Gamma$ must have cardinality at most $s$. Then the lemma, and hence  Proposition \ref{ebound}, are proved. 
\end{proof}

\begin{Proposition}
\label{fbound} 
Let $K$ be a  finitely generated field extension of an F-finite ground field $k$. Let $v$ be a valuation of transcendence degree $t$ on  $K$ over $k$. 
Then 
$$f(v/v^p) \leq p^t[k:k^p],$$ 
with equality when $\kappa(v)$ is finitely generated over $k$.
\end{Proposition}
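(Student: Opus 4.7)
The plan is to use $f(v/v^p)=[\kappa(v):\kappa(v)^p]$ from equation~(\ref{resdegree}) and bound this degree. I would first establish the equality $[\kappa(v):\kappa(v)^p]=p^t[k:k^p]$ under the hypothesis that $\kappa(v)/k$ is finitely generated, and then reduce the general inequality to this case by passing to a finitely generated subfield of $\kappa(v)$ of the same transcendence degree~$t$.

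For the equality case, assuming $\kappa(v)/k$ is finitely generated of transcendence degree $t$, I would pick a transcendence basis $x_1,\dots,x_t$ of $\kappa(v)/k$, so that $\kappa(v)$ is a finite algebraic extension of the purely transcendental subfield $F:=k(x_1,\dots,x_t)$. The degree $[F:F^p]$ can be computed directly: if $\{e_i\}$ is a basis of $k$ over $k^p$, then the elements $e_i\,x_1^{a_1}\cdots x_t^{a_t}$ with $0\le a_j<p$ form a basis of $F$ over $F^p=k^p(x_1^p,\dots,x_t^p)$, yielding $[F:F^p]=p^t[k:k^p]$. Since $\kappa(v)/F$ is finite, the commutative diagram of field extensions argument used just before Proposition~\ref{F-finiteness preserved under extensions} (applied to $F^p\subseteq F$ and $\kappa(v)^p\subseteq\kappa(v)$, together with the identification $[\kappa(v):F]=[\kappa(v)^p:F^p]$ coming from Frobenius) gives $[\kappa(v):\kappa(v)^p]=[F:F^p]=p^t[k:k^p]$, as required.

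For the general inequality, let $\alpha_1,\dots,\alpha_n\in\kappa(v)$ be any finite collection of $\kappa(v)^p$-linearly independent elements. Choose a transcendence basis $x_1,\dots,x_t$ of $\kappa(v)/k$ and form the subfield $L':=k(x_1,\dots,x_t,\alpha_1,\dots,\alpha_n)\subseteq\kappa(v)$. Then $L'$ is finitely generated over $k$ and has transcendence degree~$t$, so the equality case applied to $L'$ gives $[L':L'^p]=p^t[k:k^p]$. Since $L'^p\subseteq \kappa(v)^p$, any $L'^p$-linear relation among the $\alpha_i$ is a fortiori a $\kappa(v)^p$-linear relation, so the $\alpha_i$ remain $L'^p$-linearly independent. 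Therefore $n\le[L':L'^p]=p^t[k:k^p]$, and taking the supremum over all such families yields $[\kappa(v):\kappa(v)^p]\le p^t[k:k^p]$.

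The main obstacle is that, even though $v$ has the finite transcendence degree $t$, the residue field $\kappa(v)$ itself need not be finitely generated over $k$; this is precisely why Abhyankar's inequality is strict in some cases and why a direct dimension count in $\kappa(v)$ is not immediately available. The subfield argument in the last paragraph circumvents this by reducing every finite linearly independent family to a calculation inside a finitely generated subfield of the same transcendence degree, where the first paragraph gives a clean equality. Once this reduction is in place, the equality clause of the proposition is immediate from the computation of $[F:F^p]$.
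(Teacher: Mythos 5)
Your proof is correct and follows essentially the same route as the paper's: the paper abstracts the key step as Lemma~\ref{generalization of a previous result} (for an algebraic extension $L'\subseteq L$ of F-finite fields, $[L:L^p]\le[L':L'^p]$, proved by passing to the finite subextension $L'(\alpha_1,\dots,\alpha_n)$ and noting $L'(\alpha_1,\dots,\alpha_n)^p\subseteq L^p$), and then applies it to $k(x_1,\dots,x_t)\hookrightarrow\kappa(v)$; you inline the same idea by forming $L'=k(x_1,\dots,x_t,\alpha_1,\dots,\alpha_n)$ directly for an arbitrary finite $\kappa(v)^p$-independent family. The only cosmetic difference is that you spell out the computation $[k(x_1,\dots,x_t):k(x_1,\dots,x_t)^p]=p^t[k:k^p]$, which the paper leaves to the reader in Lemma~\ref{simple field theory lemma}.
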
 

\begin{proof} 
The second statement follows immediately from the following well-known fact, whose proof is an easy computation:

\begin{Lemma}
\label{simple field theory lemma} 
A finitely generated field $L$ of characteristic $p$ and transcendence degree $n$ over $k$ satisfies $[L:L^p] = [k:k^p]p^n.$
\end{Lemma}

{}

  It remains to consider the case where $\kappa(v)$ may not be finitely generated. Because $K/k$ is a function field, Abhyankar's inequality (\ref{Abineq}) guarantees that the transcendence degree of $\kappa(v)$ over $k$ is finite. Let $x_1, \dots, x_t$ be a transcendence basis.
There is a factorization 
$$ k \hookrightarrow k(x_1, \dots, x_t) \hookrightarrow \kappa(v)$$
where the second inclusion is algebraic. 
The proposition follows immediately from:

\begin{Lemma}
\label{generalization of a previous result}
If $L' \subseteq L$ is an algebraic extension of F-finite fields, then $[L:L^p] \leq [L':L'^p]$.
\end{Lemma}

  To prove this lemma, recall that Proposition \ref{F-finiteness preserved under extensions} ensures that $[L:L^p] = [L':L'^p]$  when $L' \subseteq L$ is  {\it finite.} So suppose $L$ is algebraic but not necessarily finite over $L'$.  Fix a basis $\{\alpha_1, \dots, \alpha_n\}$ for $L$ over $L^p$, and consider the intermediate field 
$$L' \hookrightarrow  L' (\alpha_1,\dots,\alpha_n) \hookrightarrow L.$$
Since each $\alpha_i$ is algebraic over $L'$, it follows that $\tilde L := L'(\alpha_1,\dots,\alpha_n)$ is finite over  $L'$, so again $[\tilde L:\tilde L^p] = [L':L'^p] $ by Proposition \ref{F-finiteness preserved under extensions}. Now observe that $\tilde L^p \subset L^p$, and so the $L^p$-linearly independent set  $\{\alpha_1, \dots, \alpha_n\}$ is also linearly independent over $\tilde L^p$. This means that $[L:L^p] \leq [\tilde L:\tilde L^p] $ and hence $[L: L^p] \leq [L':L'^p].$
This proves Lemma \ref{generalization of a previous result}.

{}

  Finally, Proposition \ref{fbound} is proved by applying Lemma \ref{generalization of a previous result} to the inclusion 
$$L'=k(x_1, \dots, x_t) \hookrightarrow L=\kappa(v).$$
[Note that  $\kappa(v)$ is F-finite,  because
$[\kappa(v):(\kappa(v))^p] = f(v/v^p) \leq [K:K^p]$ from the  general inequality (\ref{ramification/residue for finite extensions is finite})).] So  we get $$f(v/v^p) = [\kappa(v):(\kappa(v))^p]  \leq [k(x_1, \dots, x_t): (k(x_1, \dots, x_t))^p] = p^t[k:k^p].$$
\end{proof}

\begin{proof}[Proof of Theorem \ref{F-finiteness equivalent to Abhyankar}]

It only remains to prove the equivalence of (i) and (ii). First assume $v$ is Abhyankar. Then its value group $\Gamma_v$ is finitely generated and its residue field $\kappa(v)$ is finitely generated over $k$. According to Proposition \ref{ebound}, we have $e(v/v^p) =  p^s$, where $s$ is the rational rank of $v$. According to Proposition \ref{fbound}, we have $f(v/v^p) = p^t[k:k^p]$, where $t$ is the transcendence degree of $v$. By definition of Abhyankar, $s+t = n$, where $n$ is the transcendence degree of $K/k$. But then
$$e(v/v^p)f(v/v^p) = (p^s)(p^t)[k:k^p] = p^{n}[k:k^p] = [K:K^p].$$
By Theorem \ref{F-finiteness in terms of ramification and residue degree}, we can conclude that $v$ is F-finite.

{}

  Conversely, we want to prove that a valuation $v$ with F-finite valuation ring $R_v$ is Abhyankar. 
Let $s$ denote the rational rank and $t$ denote the transcendence degree of $v$. 
From Theorem \ref{F-finiteness in terms of ramification and residue degree}, the F-finiteness of $v$ gives
$$e(v/v^p)f(v/v^p) = [K:K^p] = p^n[k:k^p].$$
Using the bounds  $p^s \geq e(v/v^p)$ and $p^t[k:k^p] \geq f(v/v^p)$ provided by Propositions \ref{ebound} and \ref{fbound}, respectively, we  substitute to get
$$(p^s)(p^t[k:k^p])   \geq  e(v/v^p) f(v/v^p) = p^n[k:k^p].$$
It follows that $s+t \geq n$. Then $s + t = n$ by (\ref{Abineq}), and $v$ is Abhyankar.
\end{proof}

\section{F-regularity}\label{F-regularity}

  An important class of F-pure rings are the  strongly F-regular rings. Originally, strongly F-regular rings were defined only in the Noetherian F-finite case. By definition, a 
Noetherian F-finite reduced ring $R$ of prime characteristic $p$  is strongly F-regular if for every non-zero-divisor $c$, there exists $e$ such that the 
map
$$R \rightarrow F_*^eR \,\,\,\,\,\,\,\,\,{\text{sending}}\,\,\,\, 1\mapsto c$$
splits in the category of $R$-modules  \cite{HH1}.
In this section, we show that by replacing the word "splits" with the words "is pure" in the above definition, we obtain a well-behaved notion of F-regularity in a broader setting.
Hochster and Huneke themselves suggested, but never pursued, this possibility in \cite[Remark 5.3]{HH4}.

 {}

  Strong F-regularity first arose as a technical tool in the theory of {\it tight closure}: Hochster and Huneke made use of it in their deep proof of the existence of  test elements \cite{HH4}. Indeed, the original motivation for (and the name of) strong F-regularity was born of a desire to better understand \textbf{weak F-regularity}, the property of a  Noetherian ring that all ideals are tightly closed. In many contexts, strong and weak F-regularity are known to be equivalent (see e.g. \cite{LySm} for the graded case, \cite{HH1} for the Gorenstein case)  but it is becoming clear that at least for many applications, strong F-regularity is the more useful and flexible notion.  Applications beyond  tight closure  include commutative algebra more generally \cite{AbLeus, Blickle, ST1, Schw2,  Smith-Zhang}, algebraic geometry \cite{ GLPSTZ,  hacon, Patak1,  SmSch, Smi2},  representation theory \cite{BrKu, MR, Ram, SmVan} and combinatorics \cite{BeMuRaSm}.

\vspace{1mm}

\subsection{Basic Properties of F-pure regularity}

We propose the following definition, intended to be a generalization of strong F-regularity to arbitrary commutative rings of characteristic  $p$,  not necessarily  F-finite or  Noetherian. 

\begin{definition}
\label{pure element and pure F-regularity}
Let $c$ be an element in a ring $R$  of prime characteristic $p$.  Then $R$ is said to be  \textbf{F-pure along $c$}
if there exists $e > 0$ such that the $R$-linear map
$$\lambda_c^e: R \rightarrow F_*^{e}R \,\,\,\,{\text{sending}} \,\,\, 1 \mapsto c$$
is a pure map of $R$-modules.
We say $R$ is \textbf{F-pure regular} if it is F-pure along every non-zerodivisor.
\end{definition}

  A ring $R$ is F-pure if and only if it is $F$-pure along the element $1$. Thus F-pure regularity is a substantial strengthening of  F-purity, requiring F-purity along {\it all} non-zerodivisors instead of just along the unit.

\begin{remark}
\label{some comments on the definition of pure F-regularity}
\begin{enumerate}
\item[(i)]  If $R$ is  Noetherian and F-finite, then the map $\lambda_c^e: R \rightarrow F_*^{e}R$ is pure if and only if it splits (by Lemma \ref{splitvspure}). So F-pure regularity for a Noetherian F-finite ring   is the same as strong F-regularity.
\item[(ii)] If $c$ is a zerodivisor, then the map $\lambda_c^e$ is never injective for any $e \geq 1$. In particular, a ring is never $F$-pure along a zerodivisor.
\item[(iii)] The terminology ``F-pure along $c$'' is chosen to honor Ramanathan's closely related notion of  ``Frobenius splitting along a divisor'' \cite{Ram}.
See \cite{Smi2}.
\end{enumerate}
\end{remark} 

{}
 
  The following proposition gathers up some basic properties of  F-pure regularity for arbitrary commutative rings.

\begin{proposition}
\label{basic properties of pure F-regularity}
Let $R$ be a commutative ring of characteristic $p$, not necessarily Noetherian or F-finite.
\begin{enumerate}
\item[(a)]  If $R$ is F-pure along some element, then  $R$ is F-pure. More generally,  if $R$ is  $F$-pure along a product $cd$, then  $R$ is $F$-pure along the factors  $c$ and $d$.
\item[(b)] If $R$ is  $F$-pure along some element, then $R$ is reduced.
\item[(c)] If $R$ is an F-pure regular ring with finitely many minimal primes, and $S \subset R$ is a multiplicative set, then $S^{-1}R$ is  F-pure regular. In particular,  F-pure regularity is preserved under localization in Noetherian rings, as well as in domains.
\item[(d)] Let $\varphi: R \rightarrow T$ be a pure ring map which maps non-zerodivisors of $R$ to non-zerodivisors of $T$. If $T$ is F-pure regular, then $R$ is F-pure regular. In  particular,   if $\varphi: R \rightarrow T$ is faithfully flat and $T$ is F-pure regular, then  $R$ is F-pure regular.
\item[(e)] Let $R_1, \dots, R_n$ be rings of characteristic $p$. If $R_1 \times \dots \times R_n$ is F-pure regular, then each $R_i$ is F-pure regular.
\end{enumerate}
\end{proposition}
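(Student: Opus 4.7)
The plan is to handle the five parts in order of increasing difficulty, with part (c) being the main obstacle because it requires lifting a non-zerodivisor from a localization back to $R$.

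For (a), I would exploit the fact that multiplication by $d$ is an $R$-linear endomorphism of $F_*^eR$ (one checks $r\cdot(dx)=r^{p^e}dx=d\,(r\cdot x)$), so that $\lambda_{cd}^e$ factors as $(d\cdot)\circ\lambda_c^e$; since a pure composition forces its first factor to be pure, $\lambda_c^e$ is pure, and symmetrically so is $\lambda_d^e$. Taking $d=1$ recovers the F-purity of $R$. Part (b) follows immediately: F-purity forces $F:R\to F_*R$ to be injective, and injectivity of Frobenius implies $R$ is reduced. For (e), a non-zerodivisor $c_i\in R_i$ lifts to the non-zerodivisor $c=(1,\dots,c_i,\dots,1)\in R:=\prod_j R_j$, and under the identification $F_*^eR\cong\prod_j F_*^eR_j$ the map $\lambda_c^e$ decomposes as the product $\prod_j\lambda_{c_j}^e$; tensoring against an $R_i$-module $Q_i$ extended by zero to a $\prod R_j$-module identifies $\lambda_c^e\otimes Q_i$ with $\lambda_{c_i}^e\otimes_{R_i}Q_i$, so purity of the product forces purity of each factor.

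For (d), I would base-change $\lambda_c^e$ along $\varphi:R\to T$ to obtain a $T$-linear map $T\to F_*^eR\otimes_RT$ with $1\mapsto c\otimes 1$, and then compose with the canonical $T$-linear map $F_*^eR\otimes_RT\to F_*^eT$ sending $x\otimes t\mapsto t^{p^e}\varphi(x)$. A direct check shows the composition is exactly $\lambda_{\varphi(c)}^e$. Since $\varphi$ sends non-zerodivisors to non-zerodivisors, $\varphi(c)$ is a non-zerodivisor in $T$, so F-pure regularity of $T$ yields some $e$ for which $\lambda_{\varphi(c)}^e$ is pure, whence $\lambda_c^e\otimes_RT$ is pure as a $T$-module map (first factor of a pure composition). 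A short diagram chase then finishes the argument: for any $R$-module $Q$, injectivity of $(\lambda_c^e\otimes_R Q)\otimes_R T=(\lambda_c^e\otimes_R T)\otimes_T(Q\otimes_R T)$ together with the injection $N\otimes_R Q\hookrightarrow(N\otimes_R Q)\otimes_R T$ supplied by the purity of $R\to T$ forces injectivity of $\lambda_c^e\otimes_R Q$.

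The crux of the argument is (c), where the obstacle is that a non-zerodivisor $c/s\in S^{-1}R$ need not correspond to a non-zerodivisor of $R$, since $c$ may lie in a minimal prime of $R$ that meets $S$. My plan is to use prime avoidance to produce a replacement $c'\in R$ that is a non-zerodivisor of $R$ and equal to $c$ in $S^{-1}R$. Concretely, $R$ is reduced (by (b)) with minimal primes $\mathfrak{p}_1,\dots,\mathfrak{p}_n$; label them so $\mathfrak{p}_1,\dots,\mathfrak{p}_k$ are disjoint from $S$ and $\mathfrak{p}_{k+1},\dots,\mathfrak{p}_n$ each meet $S$. The non-zerodivisor hypothesis forces $c\notin\mathfrak{p}_i$ for $i\leq k$. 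Setting $A=\{j>k:c\in\mathfrak{p}_j\}$, prime avoidance (using incomparability of minimal primes) supplies $y_0\in\bigcap_{i\notin A}\mathfrak{p}_i\setminus\bigcup_{j\in A}\mathfrak{p}_j$; then $c':=c+y_0$ avoids every minimal prime of $R$ (the two cases $i\notin A$ and $j\in A$ check out), hence is a non-zerodivisor. Moreover, picking $s_j\in\mathfrak{p}_j\cap S$ for each $j\in A$ and setting $u=\prod_{j\in A}s_j\in S$, the product $uy_0$ lies in every minimal prime of the reduced ring $R$, so $uy_0=0$ in $R$, giving $y_0=0$ in $S^{-1}R$ and therefore $c'/1=c/1$. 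Applying F-pure regularity of $R$ to $c'$ yields some $e$ with $\lambda_{c'}^e$ pure; localizing at $S$ preserves purity (flatness of $S^{-1}R$ over $R$), the canonical isomorphism $S^{-1}(F_*^eR)\cong F_*^e(S^{-1}R)$ identifies the localized map with $\lambda_{c'/1}^e$, and finally $c/s$ differs from $c'/1$ by the unit $1/s$ of $S^{-1}R$, yielding purity of $\lambda_{c/s}^e$ by the factorization in (a).
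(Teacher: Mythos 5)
Your proof is correct. Parts (a), (b), and (d) follow essentially the same lines as the paper's: for (a) you factor $\lambda_{cd}^e = (\times d)\circ\lambda_c^e$ and use that a pure composition has pure first factor; for (d) you form the composite $T\to F_*^eR\otimes_RT\to F_*^eT$ equal to $\lambda_{\varphi(c)}^e$, which is the same square the paper draws, though you then carry out an explicit diagram chase where the paper invokes its general lemma on purity of compositions and restriction of scalars (Lemma~\ref{properties of pure maps}(a), (b), (d), (f)). In (c) you and the paper pursue the same strategy — replace $c$ by a genuine non-zerodivisor $c'$ of $R$ representing the same element of $S^{-1}R$, pull a pure $\lambda_{c'}^e$ through the isomorphism $S^{-1}(F_*^eR)\cong F_*^e(S^{-1}R)$, and correct by the unit $1/s$ — but the paper outsources the prime-avoidance step to a citation of Hochster's notes, whereas you supply the full construction of $y_0$ and the verification that $u y_0=0$; this is a welcome expansion, not a deviation. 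The one genuinely different ingredient is (e): the paper reduces to (c) by localizing at $S=R_1\times\cdots\times\{1\}\times\cdots\times R_n$, noting $S^{-1}R\cong R_i$, while you argue directly from the idempotent decomposition $F_*^eR\cong\prod_j F_*^eR_j$ and test purity against $R_i$-modules extended by zero, showing $\lambda_c^e\otimes_R\widetilde{Q_i}$ identifies with $\lambda_{c_i}^e\otimes_{R_i}Q_i$. Your route for (e) is self-contained and avoids re-entering the localization machinery (and in particular sidesteps the finitely-many-minimal-primes hypothesis of (c), which (e) does not assume), so it is arguably cleaner, though both are sound.
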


{}

  The proof of Proposition \ref{basic properties of pure F-regularity} consists mostly of applying general facts about purity to the special case of the maps $\lambda^e_c$. For the convenience of the reader, we gather these basic facts together in one lemma:

\begin{lemma}
\label{properties of pure maps}
Let $A$ be an arbitrary commutative ring $A$, not necessarily Noetherian nor of characteristic $p$.
\begin{enumerate}
\item[(a)] If $M \rightarrow N$ and $N \rightarrow Q$ are pure maps of $A$-modules, then the composition $M\rightarrow N \rightarrow Q$ is also pure.

\item[(b)] If a composition $M \rightarrow N \rightarrow Q$ of $A$-modules  is pure, then $M \rightarrow N$ is pure.

\item[(c)] If $B$ is an $A$-algebra and $M \rightarrow N$ is pure map of $A$-modules, then $B \otimes_A M \rightarrow B \otimes_A N$ is a pure map of $B$-modules.

\item[(d)] Let $B$ be an $A$-algebra. If  $M \rightarrow N$ is a pure map of $B$-modules, then it is also pure as a map of $A$-modules.

\item[(e)] An $A$-module map $M \rightarrow N$ is pure if and only if for all prime ideals $\mathcal P \subset A$, $M_{\mathcal P} \rightarrow N_{\mathcal P}$ is pure.

\item[(f)] A faithfully flat map of rings  is pure.

 \item[(g)] If $(\Lambda, \leq)$ is a directed set with a least element $\lambda_0$, and $\{N_\lambda\}_{\lambda \in \Lambda}$ is a direct limit system of $A$-modules indexed by $\Lambda$ and $M \rightarrow N_{\lambda_0}$ is an $A$-linear map, then $M \rightarrow \varinjlim_{\lambda} N_{\lambda}$ is pure if and only if $M \rightarrow N_{\lambda}$ is pure for all $\lambda$.

\item[(h)] A map of modules  $A\rightarrow N$ over a Noetherian local ring $(A, m)$ is  pure if and only if  $E \otimes_A A \rightarrow E \otimes_A N$ is injective where $E$ is the injective hull of the residue field of $R$.

\end{enumerate}
\end{lemma}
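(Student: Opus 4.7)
The plan is to dispatch (a)–(g) by unwinding the definition of purity, that is, by testing injectivity of $(-) \otimes_A Q$, and applying a small stock of standard tools: exactness of tensor products, the associativity isomorphism, the fact that localization commutes with tensor products, and the exactness of directed colimits. The main obstacle will be the non-trivial direction of (h), which requires Matlis-duality style arguments.

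For (a), injections compose: for any $A$-module $Q$, $M\otimes_A Q \hookrightarrow N\otimes_A Q \hookrightarrow Q_0\otimes_A Q$, whence $M\otimes_A Q \hookrightarrow Q_0\otimes_A Q$. For (b), if the composition $M\otimes_A Q \to N\otimes_A Q \to Q_0\otimes_A Q$ is injective, then so is its first factor. For (c), given a $B$-module $Q'$, use the associativity isomorphism $(B\otimes_A M)\otimes_B Q' \cong M\otimes_A Q'$, which identifies the map $B\otimes_A M \to B\otimes_A N$ tensored over $B$ with $Q'$ with the map $M\otimes_A Q' \to N\otimes_A Q'$; the latter is injective by purity of $M\to N$ over $A$. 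For (d), given an $A$-module $Q$, run the same associativity in reverse: $M\otimes_A Q \cong M\otimes_B(B\otimes_A Q)$, and the hypothesis applied to the $B$-module $B\otimes_A Q$ yields injectivity.

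Item (e) follows because localization is exact and commutes with tensor products: $(M\otimes_A Q)_{\mathcal P} \cong M_{\mathcal P}\otimes_{A_{\mathcal P}} Q_{\mathcal P}$. The forward direction then localizes the injection $M\otimes_A Q \hookrightarrow N\otimes_A Q$; the reverse tests injectivity of the global map prime-by-prime, noting that any $A_{\mathcal P}$-module is also an $A$-module via restriction. For (f), the faithful flatness of $A\to B$ tells us that tensoring with $B$ preserves and reflects injections; given any $A$-module $Q$, the map $Q \to B\otimes_A Q$ becomes injective after tensoring with $B$ because the composition $B\otimes_A Q \to B\otimes_A B\otimes_A Q \overset{\mu}{\to} B\otimes_A Q$ (with $\mu$ induced by multiplication) is the identity. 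For (g), use that directed colimits are exact and commute with tensor products: $\varinjlim(N_\lambda\otimes_A Q) \cong (\varinjlim N_\lambda)\otimes_A Q$. If each $M\to N_\lambda$ is pure, the injections $M\otimes_A Q \hookrightarrow N_\lambda\otimes_A Q$ assemble to an injection into the colimit; conversely, given purity into the colimit, the factorization $M \to N_{\lambda_0} \to N_\lambda \to \varinjlim_\mu N_\mu$ together with (b) yields purity of $M\to N_\lambda$ for every $\lambda$.

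The hard part is (h), whose non-trivial direction asserts that injectivity of the single map $E \to E\otimes_A N$ forces injectivity of $Q \to Q\otimes_A N$ for every $A$-module $Q$. The plan is first to reduce, using (g) and the fact that every $A$-module is a directed colimit of finitely generated submodules, to the case of $Q$ finitely generated. For $Q$ finitely generated over the Noetherian local $(A,m)$, the standard Matlis-duality trick applies: the functor $\mathrm{Hom}_A(-,E)$ is faithfully exact on the category of finitely generated (in fact Noetherian or Artinian) $A$-modules, so it suffices to check that $\mathrm{Hom}_A(Q\otimes_A N, E) \to \mathrm{Hom}_A(Q, E)$ is surjective, which by Hom-tensor adjunction becomes $\mathrm{Hom}_A(Q, \mathrm{Hom}_A(N,E)) \to \mathrm{Hom}_A(Q, E)$; this surjectivity reduces to the splitting/injectivity datum $E \hookrightarrow E\otimes_A N$ dualized back via $E = E(R/m)$ being an injective cogenerator. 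This is essentially Hochster's criterion from \cite{HR1}, and citing it is legitimate; the main obstacle is keeping the reduction and duality steps straight, since $Q$ is merely finitely generated rather than finite length, so one passes to the $m$-adic completion (where $\mathrm{Hom}_A(-,E)$ is fully faithful on finitely generated modules) to finish.
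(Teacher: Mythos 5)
Your treatment of (a)--(g) is correct and matches the paper's approach: the paper proves (d) as a worked example, declares (a)--(c) elementary, cites Bourbaki for (f), and cites Hochster--Huneke for (g) and (h); you supply the same elementary arguments (associativity of tensor, localization commuting with tensor, the split-injectivity trick for faithfully flat maps, exactness of filtered colimits) and, like the paper, defer the substance of (h) to the literature. Two small corrections to your sketch of (h), though. First, the reduction to finitely generated test modules $Q$ is not an instance of (g): that part concerns a filtered colimit in the \emph{target} of a fixed map $M\to\varinjlim N_\lambda$, not a filtered colimit in the test module $Q$ you tensor against. The reduction you want is still valid, but by a direct argument: writing $Q=\varinjlim Q_i$ over its finitely generated submodules, the map $Q\to Q\otimes_A N$ is the filtered colimit of the maps $Q_i\to Q_i\otimes_A N$, and filtered colimits preserve injections. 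Second, the reason one passes to the completion is not really a finitely-generated-versus-finite-length issue; it is that $\operatorname{Hom}_A(E,E)\cong\widehat{A}$, so Matlis-dualizing the hypothesis $E\hookrightarrow E\otimes_A N$ produces a map $\psi\colon N\to\widehat{A}$ with $\psi(1)=1$ rather than a splitting of $A\to N$. One then observes that for any $Q$ the composite $Q\to Q\otimes_A N\to Q\otimes_A\widehat{A}$ is the map induced by $A\to\widehat{A}$, which is injective by (f), so $Q\to Q\otimes_A N$ is injective --- and this in fact obviates the reduction to finitely generated $Q$ altogether. Since you and the paper both cite the reference for (h), these slips are cosmetic, but they would matter if the argument were written out in full.
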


\begin{proof}[Proof of Lemma \ref{properties of pure maps}]
Properties (a)-(d) follow easily from the definition of purity and elementary properties of tensor product. As an example, let us prove (d). If $P$ is an $A$-module, we want to show that 
$P \otimes_A M \rightarrow P \otimes_A N$ is injective. The map of $B$-modules
$$(P \otimes_A B) \otimes_B M \rightarrow (P \otimes_A B) \otimes_B N$$
is injective by purity of $M \rightarrow N$ as a map of $B$-modules. Using the natural $A$-module isomorphisms
$(P \otimes_A B) \otimes_B M \cong P \otimes_A M$ and 
$(P \otimes_A B) \otimes_B N \cong P\otimes_A N,$ 
we conclude that $
P \otimes_A M \rightarrow P \otimes_A N
$ is injective in the category of $A$-modules.
\vspace{1mm}

  Property (e) follows from (c) by tensoring with $A_p$ and the fact that injectivity of a map of modules is a local property. Property (f) follows from
 \cite[I.3.5, Proposition 9(c)]{Bourbaki}.  Properties (g)  and (h) are proved in \cite[Lemma 2.1]{HH3}.  \end{proof}

\begin{proof}[Proof of Proposition \ref{basic properties of pure F-regularity}]
(a) Multiplication by $d$ is an an $R$-linear map, so by restriction of scalars also
$$
F^e_*R \overset{\times d}\longrightarrow F^e_*R
$$
is $R$-linear. Precomposing with  $\lambda_c^e$ we have
$$
R \overset{\lambda^e_c}\longrightarrow F^e_*R \overset{\times d}\longrightarrow F^e_*R \,\,\,\,\, {\text{sending}} \,\, 1 \mapsto cd,
$$
which is $\lambda_{cd}^e$. Our hypothesis that $R$ is F-pure along $cd$ means that   there is some $e$ for which this composition is pure. So by Lemma \ref{properties of pure maps}(b), it follows also that 
$\lambda_c^e$ is pure. That is, $R$ is F-pure along $c$ (and since $R$ is commutative, along $d$).
 The second statement follows since F-purity along the product $c \times 1$ implies $R$ is  F-pure along $1$. So some iterate of Frobenius is a pure map, and so F-purity follows from Lemma
 \ref{properties of pure maps}(b).
{}

  (b) By (a) we see that $R$ is F-pure. In particular, the Frobenius map is pure and hence injective, so $R$ is reduced.

{}

  (c) Note $R$ is reduced by (b). Let $\alpha \in S^{-1}R$ be a non-zerodivisor. 
Because $R$ has finitely many minimal primes, a standard prime avoidance argument  shows that there exists a non-zerodivisor $c \in R$ and $s \in S$ such that $\alpha = c/s$  
(a minor modification of \cite[Proposition on Pg 57]{Hoc2}).
By hypothesis, $R$ is F-pure along $c$. Hence there exists $e > 0$ such that the map $\lambda_c^e: R \rightarrow F_*^{e}R$ is pure. Then the map
$$
\lambda_{c/1}^e: S^{-1}R \longrightarrow F_*^{e}(S^{-1}R)\,\,\,\,{\text{sending}}\,\,\,\, \hspace{2mm} 1 \mapsto c/1$$
is pure by \ref{properties of pure maps}(e) and the fact that $S^{-1}(F_*^{e}R) \cong F_*^{e}(S^{-1}R)$ as $S^{-1}R$-modules (the isomorphism $S^{-1}(F_*^{e}R) \cong F_*^{e}(S^{-1}R)$ is given by $r/s \mapsto r/s^{p^e}$). Now the $S^{-1}R$-linear map
$$\ell_{1/s}: S^{-1}R \rightarrow S^{-1}R\,\,\,\,{\text{sending}}\,\,\,\, \hspace{2mm} 1 \mapsto 1/s$$
is an isomorphism. Applying $F_*^{e}$, we see that
$$F_*^{e}(\ell_{1/s}): F_*^{e}(S^{-1}R) \rightarrow F_*^{e}(S^{-1}R)\,\,\,\,{\text{sending}}\,\,\,\, 1 \mapsto 1/s$$
is also an isomorphism of $S^{-1}R$-modules. In particular, $F_*^{e}(\ell_{1/s})$ is a pure map of $S^{-1}R$-modules. So purity of 
$$F_*^{e}(\ell_{1/s}) \circ \lambda_{c/1}^e$$ 
follows by \ref{properties of pure maps}(a). But $F_*^{e}(\ell_{1/s}) \circ \lambda_{c/1}^e$ is precisely the map
$$\lambda_{c/s}^e: S^{-1}R \rightarrow F_*^{e}(S^{-1}R)\,\,\,\,{\text{sending}}\,\,\,\, 1 \mapsto c/s.$$

{}

  (d) Let $c \in R$ be a non-zerodivisor. Then $\varphi(c)$ is a non-zero divisor in $T$ by hypothesis. Pick $e > 0$ such that the map $\lambda_{\varphi(c)}^e: T \rightarrow F_*^{e}T$ is a pure map of $T$-modules. By \ref{properties of pure maps}(f) and \ref{properties of pure maps}(a),
$$R \xrightarrow{\varphi} T \xrightarrow{\lambda_{\varphi(c)}^e} F_*^{e}T$$
is a pure map of $R$-modules. We have commutative diagram of $R$-linear maps
$$\begin{tikzcd}
R \arrow{r}{\varphi} \arrow{d}{\lambda_{c}^e}
& T  \arrow{d}{\lambda_{\varphi(c)}^e}\\
F_*^{e}R \arrow{r}{F_*^{e}(\varphi)} & F_*^{e}T
\end{tikzcd}$$

  The purity of $\lambda_c^e$ follows by \ref{properties of pure maps}(b). Note that if $\varphi$ is faithfully flat, then it is pure by \ref{properties of pure maps}(f) and maps non-zerodivisors to non-zerodivisors. 

{}

  (e) Let $R := R_1 \times \dots \times R_n$. Consider the multiplicative set
$$S : = R_1 \times \dots \times R_{i-1} \times \{1\} \times R_{i+1} \times \dots \times R_n.$$
Since
$S^{-1}R \cong R_i,$
it suffices to show that $S^{-1}R$ is  F-pure regular. So let $\alpha \in S^{-1}R$ be a non-zerodivisor. Note that 
we can select $u \in R$ and $s \in S$ such that $u$ is a non-zerodivisor and $\alpha = u/s$.
So we can now repeat the proof of (c) verbatim to see that  $S^{-1}R$ must be pure along $\alpha$.
\end{proof}

{}

\begin{remark}
\label{technical remark on purity} 
It is worth observing in Definition \ref{pure element and pure F-regularity}, that if the map  $\lambda_c^e$ is a pure map, then $\lambda_c^f$ is also a pure map for all $f \geq e$. Indeed, to see this note that it suffices to show that $\lambda_c^{e+1}$ is pure. We know $R$ is F-pure by \ref{basic properties of pure F-regularity}(a). So  Frobenius 
$$F: R \rightarrow F_*R$$
is a pure map of $R$-modules. By hypothesis, 
$$\lambda_c^e: R \rightarrow F_*^{e}{R}$$
is pure. Hence \ref{properties of pure maps}(d) tell us that
$$F_*(\lambda_c^e): F_*R \rightarrow F_*(F_*^eR)$$
is a pure map of $R$-modules. Hence the composition
$$R \xrightarrow{F} F_*R \xrightarrow{F_*(\lambda_c^e)} F_*(F_*^{e}R)\,\,\,\,{\text{sending}}\,\,\,\, 1 \mapsto c$$
is a pure map of $R$-modules by \ref{properties of pure maps}(a). But $F_*(F_*^{e}R)$ as an $R$-module is precisely $F_*^{e+1}R$. So 
$$\lambda_c^{e+1}: R \rightarrow F_*^{e+1}R.$$
is pure.
\end{remark}

\begin{example}\label{nonNoetherianFregular} The polynomial ring over $\mathbb F_p$ in infinitely many variables (localized at the obvious maximal ideal) is an example of a F-pure regular ring which is not Noetherian. 
\end{example}

{}

\subsection{Relationship of F-pure regularity to other singularities}
We show that our generalization of strong F-regularity continues to enjoy many important properties of the more restricted version.

\begin{theorem}
\label{Regular local rings are purely F-regular} (C.f. \cite[Theorem 3.1(c)]{HH1})
A regular local ring, not necessarily F-finite,  is F-pure regular. 
\end{theorem}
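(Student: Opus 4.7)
The plan is to produce, for each nonzero $c$ in our regular local ring $(R,m)$, an exponent $e$ for which $\lambda_c^e:R\to F_*^e R$ is pure. The two essential tools are Kunz's theorem (applicable since regular local rings are Noetherian) and Krull's intersection theorem. Kunz gives that $F_*^e R$ is flat over $R$ for every $e\geq 0$, and Krull gives $\bigcap_e m^{[p^e]} \subseteq \bigcap_e m^{p^e} = 0$, so I choose $e$ large enough that $c\notin m^{[p^e]}$. I will show that this $e$ works.

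The key step is a general lemma I would isolate first: \emph{if $(A,\mathfrak n)$ is a Noetherian local ring and $M$ is a flat $A$-module containing an element $c\notin \mathfrak n M$, then the $A$-linear map $A\to M$ sending $1\mapsto c$ is pure.} Granting this, the theorem follows by applying it to $M=F_*^e R$ and to our $c$: since the $R$-action on $F_*^e R$ is given by $r\cdot x=r^{p^e}x$, the submodule $m\cdot F_*^e R$ coincides with the ideal $m^{[p^e]}\subseteq R$. Hence $c\notin m^{[p^e]}$ is precisely $c\notin m\cdot F_*^e R$, and the lemma delivers purity of $\lambda_c^e$.

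To prove the lemma, I would use the injective-hull criterion in Lemma~(h) of the excerpt: purity of $A\to M$ is equivalent to injectivity of $E\otimes_A A \to E\otimes_A M$, where $E$ is the injective hull of $A/\mathfrak n$. For any nonzero $\alpha \in E$, the cyclic submodule $A\alpha \cong A/\mathrm{Ann}(\alpha)$ has $\mathrm{Ann}(\alpha)\subseteq \mathfrak n$. By flatness of $M$, the inclusion $A\alpha\hookrightarrow E$ tensored with $M$ remains injective, so $A\alpha\otimes_A M \hookrightarrow E\otimes_A M$; and under the identification $A\alpha\otimes_A M \cong M/\mathrm{Ann}(\alpha)M$, the element $\alpha\otimes c$ corresponds to the class of $c$ modulo $\mathrm{Ann}(\alpha)M \subseteq \mathfrak n M$. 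Since $c\notin \mathfrak n M$, this class is nonzero, proving injectivity of $E\to E\otimes_A M$.

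The only real subtlety is keeping straight the Frobenius-twisted $R$-module structure on $F_*^e R$ when translating between the module-theoretic condition $c\notin m\cdot F_*^e R$ and the ring-theoretic one $c\notin m^{[p^e]}$. Once that identification is in hand, the argument is a clean combination of Kunz, Krull, and Matlis duality.
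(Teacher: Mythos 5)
Your proof is correct and rests on the same three ingredients as the paper's --- Krull's intersection theorem, flatness of Frobenius for a regular local ring (Kunz), and the Matlis-dual criterion for purity (Lemma~\ref{properties of pure maps}(h)) --- but it organizes them differently and, I think, more cleanly. The paper verifies injectivity of $E \otimes_R R \to E \otimes_R F_*^e R$ by writing the injective hull $E$ explicitly as the direct limit of $R/(x_1^n,\dots,x_d^n)$ along multiplication by $x_1\cdots x_d$, where $x_1,\dots,x_d$ is a regular system of parameters, tensoring that system with the flat module $F_*^e R$, and then checking that the socle generator survives. Your approach instead isolates a general-purpose lemma: over a Noetherian local ring $(A,\mathfrak n)$, if $M$ is flat and $c\notin\mathfrak n M$, then $1\mapsto c$ is pure. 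The proof of that lemma --- replace $E$ by the cyclic submodule $A\alpha \cong A/\mathrm{Ann}(\alpha)$, use flatness to preserve the inclusion into $E$, and observe $\bar c\neq 0$ in $M/\mathrm{Ann}(\alpha)M$ because $\mathrm{Ann}(\alpha)\subseteq\mathfrak n$ --- avoids the explicit Koszul-style description of $E$ entirely, so regularity of $R$ is used only through Kunz and Krull, not through the structure of the injective hull. This makes the lemma reusable beyond the regular case, and it makes transparent that the relevant hypothesis is simply $c\notin m\cdot F_*^e R = m^{[p^e]}$. One small point worth making explicit when you write this up: for a nonzero $\alpha\in E$, the annihilator $\mathrm{Ann}(\alpha)$ is proper and hence contained in $\mathfrak n$; that inclusion is what powers the last step.
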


\begin{proof} Let $(R, m)$ be a regular local ring. By Krull's intersection theorem we know that
$$\bigcap_{e > 0} m^{[p^e]} = 0.$$
Since $R$ is a domain, the non-zerodivisors are precisely the non-zero elements of $R$. So let $c \in R$ be a non-zero element.  Choose  $e $ such that $c \notin m^{[p^e]}$. We show that the map
$$\lambda_c^e: R \rightarrow F_*^{e}R; \hspace{2mm} 1 \mapsto c$$
is pure. 

By Lemma \ref{properties of pure maps}, it suffices to check that for the injective hull $E$ of the residue field of $R$, the induced map
$$\lambda_c^e \otimes id_E: R \otimes_R E \rightarrow F_*^{e}R \otimes_R E$$
is injective, and for this, in turn,  we need only check that the socle generator  is not in the kernel.

 Recall that   $E$ is the direct limit of the injective maps
 $$
R/(x_1, \dots, x_n) \overset{x}\longrightarrow R/(x_1^2, \dots, x_n^2) \overset{x}\longrightarrow R/(x_1^3, \dots, x_n^3) \overset{x}\longrightarrow R/(x_1^4, \dots, x_n^4) \longrightarrow\cdots 
 $$
 where $x_1, \dots, x_n$ is a minimal set of generators for $ m,$ and the maps are given by multiplication by  $x = \Pi_{i=1}^d x_i.$ 
 So the module $F_*^eR \otimes_R E$ is the direct limit of the  maps
 $$
R/(x_1^{p^e}, \dots, x_n^{p^e}) \overset{x^{p^e}}\longrightarrow R/(x_1^{2p^e}, \dots, x_n^{2p^e}) \overset{x^{p^e}}\longrightarrow R/(x_1^{3p^e}, \dots, x_n^{3p^e}) \overset{x^{p^e}}\longrightarrow R/(x_1^{4p^e}, \dots, x_n^{4p^e}) \longrightarrow\cdots 
 $$
 which remains injective by the faithful flatness of $F_*^eR$.
  The induced map  $\lambda_c^e \otimes id_E: E \rightarrow F_*^e R \otimes E$ sends the socle (namely the image of $1$ in $R/m$) to the class of
   $c $ in $ R/m^{[p^e]}$, so it is non-zero provided $c\notin m^{[p^e]}.$ Thus for every non-zero $c$ in a regular local (Noetherian) ring, we have found an $e$, such that the map $\lambda_c^e$ is pure. So regular local rings are  F-pure regular.
\end{proof}

{}

\begin{proposition}
\label{pure F-regularity and F-regularity}
An F-pure regular   ring   is   normal, that is, it  is integrally closed in its total quotient ring.
\end{proposition}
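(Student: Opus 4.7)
The plan is to show directly that every element of the total quotient ring of $R$ that is integral over $R$ already lies in $R$. Write such an element as $x = r/s$ with $s \in R$ a non-zerodivisor, and suppose $x$ satisfies a monic integral equation
$$x^n + a_{n-1} x^{n-1} + \cdots + a_0 = 0, \qquad a_i \in R.$$
The goal reduces to showing $r \in sR$, for then $x \in R$.

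The essential algebraic fact is that $R[x]$ is generated as an $R$-module by $1, x, \ldots, x^{n-1}$, so for every $m \geq n$ we may write $x^m = \sum_{i=0}^{n-1} c_i x^i$ for some $c_i \in R$. Multiplying through by $s^m$ gives $r^m = \sum_{i=0}^{n-1} c_i\, r^i s^{m-i}$ as an equation in $R$, and since each term on the right is divisible by $s^{m-n+1}$ we conclude
$$r^m \in s^{m-n+1} R \quad \text{for all } m \geq n.$$

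The crux of the argument is a careful choice of exponent: I apply F-pure regularity to the non-zerodivisor $s^{n-1}$ rather than to $s$ itself. This yields some $e > 0$ for which $\lambda_{s^{n-1}}^e : R \to F_*^e R$ is a pure map of $R$-modules, and by Remark \ref{technical remark on purity} we may enlarge $e$ so that $p^e \geq n$. Tensoring this pure map with $R/sR$ yields the injection
$$R/sR \hookrightarrow F_*^e R / sR \cdot F_*^e R \;=\; R/s^{p^e} R,$$
where I identify $F_*^e R$ with $R$ as a set and use that the $R$-action on $F_*^e R$ raises scalars to the $p^e$-th power. Under this injection, the class of $r$ maps to the class of $\lambda_{s^{n-1}}^e(r) = r^{p^e} s^{n-1}$.

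Applying the algebraic fact with $m = p^e$ gives $r^{p^e} \in s^{p^e - n + 1} R$, whence $r^{p^e} s^{n-1} \in s^{p^e} R$. Therefore the class of $r$ vanishes in $R/s^{p^e} R$, and injectivity forces $r \in sR$, so $x \in R$ and $R$ is normal. The only real obstacle in the argument is locating the right exponent: applying F-purity along $s$ itself would require $r^{p^e} \in s^{p^e - 1} R$, which is strictly stronger than what integrality of degree $n$ supplies whenever $n > 2$, and replacing $s$ by $s^{n-1}$ precisely compensates for this gap.
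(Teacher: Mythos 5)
Your proof is correct and takes essentially the same route as the paper: in both arguments, integrality of $r/s$ is used to produce a fixed non-zerodivisor multiplier (a power of $s$) with $c\,r^{p^e} \in (s^{p^e})$ for all large $e$, and then purity of $\lambda_c^e$ tensored with $R/(s)$ forces $r \in (s)$. The only real difference is how the multiplier is found: the paper cites the Rees-algebra characterization of the integral closure $\overline{(s)}$ from Matsumura to produce $c = s^h$, whereas you derive the bound $r^m \in s^{m-n+1}R$ directly from module-finiteness of $R[r/s]$, yielding $c = s^{n-1}$; your version is a bit more self-contained but the underlying mechanism is identical.
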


\begin{proof}
 Take a fraction $r/s$ in the total quotient ring integral over $R$.  Then clearing denominators in an equation of integral dependence, we have 
$r \in \overline{(s)}$, the integral closure of the ideal $(s)$. This implies that there exists an $h$ such that $(r,s)^{n+h} = (s)^n(r, s)^h$ for all $n$ \cite[p64]{Mat}.  Setting $c=s^h$, this implies
$ cr^n \in (s)^n$ for all large  $n$. In particular, taking $n = p^e$, we see that class of  $r$ modulo $(s)$ 
is in the kernel of the map induced by tensoring the map
\begin{equation}\label{eq5}
R\rightarrow F^e_*R \,\,\,{\text{ sending}} \,\,\, 1\mapsto c
\end{equation}
 with the quotient module $R/(s)$. 
By purity of the map (\ref{eq5}), it follows that 
$r\in (s)$. 
We conclude that $r/s$ is in $R$ and that $R$ is normal.
\end{proof}

\subsection{Connections with  Tight Closure}
In his lecture notes on tight closure \cite{Hoc2}, Hochster suggests another way to generalize strong F-regularity to non-F-finite (but Noetherian) rings using tight closure.
We show here that  his  generalized strong F-regularity is the same as F-pure regularity for local Noetherian rings.

Although Hochster and Huneke introduced tight closure only in Noetherian rings, we can make the same definition in general for an arbitrary ring of prime characteristic $p$.
Let $N \hookrightarrow  M$ be $R$-modules. The {\textbf{ tight closure }} of $N$ in $M$ is an $R$-module $N^*_M$ containing $N$. By definition, an element $x\in M$ is in  $N^*_M$ if there exists $c\in R$, not in any minimal prime, such that for all sufficiently large $e$, the element $c\otimes x \in F^e_*R \otimes_R M$ belongs to the image of the module $F^e_*R \otimes_R N$ under the natural map 
 $ F^e_*R \otimes_RN \rightarrow  F^e_*R \otimes_R M$ induced by tensoring the inclusion $N \hookrightarrow  M$ with the $R$-module $F^e_R$.  We say that $N$ is tightly closed in $M$ if $N^*_M = N$.

\begin{definition}\label{HochsterF-regular}
Let $R$ be a Noetherian ring of prime characteristic $p$. We say that  $R$ is {\it strongly F-regular} in the sense of Hochster if, for any pair of $R$ modules  $N \hookrightarrow  M$, $N^*_M= N$.
\end{definition}

{} 
The next result compares  F-pure regularity with strongly F-regularity in the sense of Hochster:

\begin{proposition}\label{Hochster=strongF-regular}
Let $R$ be an arbitrary commutative ring of prime characteristic.  If $R$ is F-pure regular, then $N$ is tightly closed in $M$ for any pair of $R$ modules $N \subset M$.
The converse also holds if $R$ is Noetherian and local.
\end{proposition}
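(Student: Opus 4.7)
The plan is to prove the two directions separately, invoking Lemma \ref{properties of pure maps} and, for the converse, the injective-hull criterion for purity in Noetherian local rings.

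For the forward direction, fix $N \subset M$ and $x \in N^*_M$, so by definition there is some $c \in R$ not in any minimal prime with $c \otimes x$ in the image of $F^e_*R \otimes_R N \to F^e_*R \otimes_R M$ for all $e \gg 0$; by right-exactness of $F^e_*R \otimes_R -$ this amounts to the vanishing of $c \otimes \bar x$ in $F^e_*R \otimes_R (M/N)$, where $\bar x$ is the class of $x$. F-pure regular rings are reduced by Proposition \ref{basic properties of pure F-regularity}(b), and in a reduced ring any element avoiding every minimal prime is a non-zerodivisor (if $cy=0$ then $y$ lies in every minimal prime, hence is nilpotent, hence zero). So F-pure regularity supplies some $e_0$ for which $\lambda^{e_0}_c$ is pure, and Remark \ref{technical remark on purity} promotes this to every $e \geq e_0$. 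Tensoring this pure map with $M/N$ produces an injection $M/N \hookrightarrow F^e_*R \otimes_R (M/N)$ sending $\bar m \mapsto c \otimes \bar m$, and choosing $e$ large enough for both conditions to hold simultaneously forces $\bar x = 0$, i.e., $x \in N$.

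For the converse, assume $(R, m)$ is Noetherian local with every $N \subset M$ tightly closed. Applying this to $0 \subset R$ gives $0^*_R = 0$, and since every nilpotent element of $R$ automatically lies in $0^*_R$ (its high Frobenius powers vanish), $R$ is reduced; in the Noetherian setting this makes ``avoiding every minimal prime'' the same as ``non-zerodivisor.'' Fix such a non-zerodivisor $c$; we must produce some $e$ for which $\lambda^e_c$ is pure. By Lemma \ref{properties of pure maps}(h) this is equivalent to the injectivity, for some $e$, of
$$\phi_e \colon E \longrightarrow F^e_*R \otimes_R E, \qquad y \longmapsto c \otimes y,$$
where $E = E_R(R/m)$ is the injective hull of the residue field. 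If no such $e$ works, then $K_e := \ker \phi_e$ is a nonzero submodule of $E$ for every $e$.

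The main obstacle here is a quantifier swap: the tight closure hypothesis only says each individual $y$ fails to be in $0^*_E$, whereas purity demands a single $e$ killing the kernel uniformly on $E$. The resolution is that $E$ is an essential extension of its simple socle $\operatorname{soc}(E) \cong R/m$, so every nonzero submodule of $E$ contains a fixed socle generator $\sigma$. Hence $\sigma \in K_e$ for every $e$, meaning $c \otimes \sigma = 0$ in $F^e_*R \otimes_R E$ for all $e$; by definition this places $\sigma$ in $0^*_E$, contradicting $\sigma \neq 0 = 0^*_E$. This completes the converse.
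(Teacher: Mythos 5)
Your proof is correct and follows essentially the same route as the paper's: reduce the forward direction to tensoring a pure $\lambda^e_c$ with $M/N$, and for the converse use the injective-hull criterion for purity together with the fact that the socle sits inside every nonzero submodule of $E$. You spell out a few steps the paper leaves implicit (reducedness, the passage from "not in any minimal prime" to "non-zerodivisor", and why the socle generator lies in every kernel $K_e$), but the underlying argument is the same.
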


\begin{proof}
(i) Suppose $x \in N^*_M$.  Equivalently the class $\overline x$  of $x$ in $M/N$ is in $0^*_{M/N}$. So there exists  $c$ not in any minimal prime such that $c\otimes \overline x = 0 $ in $F^e_*R \otimes_R M/N$ for all large $e$. But this means that the map 
$$
R\rightarrow F^e_* R \,\,\,\,\,{\text{sending}}\,\,\, 1\mapsto c
$$ is not pure for any $e$, since
the naturally induced map 
$$
R \otimes M/N \rightarrow F^e_* R \otimes M/N 
$$
has 
$1\otimes \overline {x}$ in its kernel.

For the converse, let $c\in R$ be not in any minimal prime. We need to show that there exists some $e$ such that  the map 
$R\rightarrow F^e_*R$ sending $1$ to $c$ is pure. 
Let $E$ be the injective hull of the residue field of $R$.
 According to Lemma \ref{properties of pure maps}(i), it suffices to show that there exists an $e$ such that after tensoring $E$,  the induced map
 $$R\otimes  E\rightarrow F^e_*R \otimes E$$
is injective. But if not, then a generator $\eta$  for the socle of $E$ is in the kernel for every $e$, that is, for all $e$, $c\otimes \eta =0$ in $F^e_*R \otimes E$. In this case, $\eta \in 0^*_E$, contrary to our hypothesis that all modules are tightly closed.
\end{proof}

\begin{remark}
We do not know whether Proposition \ref{Hochster=strongF-regular} holds in the non-local case. Indeed, we do not know if F-pure regularity is a local property: if $R_m$ is F-pure regular for all maximal ideals $m$ of $R$, does it follow that $R$ is F-pure regular? If this were the case, then our argument above extends to arbitrary Noetherian rings.

\end{remark}

\begin{remark} A Noetherian ring of characteristic $p$ is {\textbf{weakly F-regular}} if $N$ is tightly closed in $M$ for any pair of {\it  Noetherian} $R$ modules $N \subset M.$ Clearly F-pure regular implies weakly F-regular. The converse is a long standing open question in the F-finite Noetherian case. For valuation rings, however, our arguments show that weak and pure F-regularity are equivalent (and both are equivalent to the valuation ring being Noetherian); See Corollary \ref{Linquan}.
\end{remark}

{}

\subsection{Elements along which F-purity fails}\label{splittingprime} We now observe an analog of the splitting prime of Aberbach and Enescu \cite{AbEn}; See also  \cite[4.7]{Tuck1}.

{}

\begin{proposition} 
\label{when the set of non pure elements is prime}
Let  $R$  be a ring of characteristic $p$, and consider the set
$$\mathcal{I} := \{c \in R: \, R {\text{ is not F-pure along }} \, c\}.$$ 
Then $\mathcal I$ is closed under multiplication by $R$, and $R - \mathcal I$ is multiplicatively closed. In particular, if $\mathcal I$ is closed under addition, then  $\mathcal I $ is a prime ideal (or the whole ring $R$). 
\end{proposition}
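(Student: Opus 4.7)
The plan is to establish the three assertions in order, with the third being formal once the first two are in place. Closure of $\mathcal{I}$ under multiplication by $R$ follows at once from Proposition \ref{basic properties of pure F-regularity}(a): if $c \in \mathcal{I}$ and $r \in R$, then assuming $rc \notin \mathcal{I}$ would give F-purity along the product $rc$ and hence, by that proposition, F-purity along the factor $c$, contradicting $c \in \mathcal{I}$.

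The substantive content is showing that $c, d \notin \mathcal{I}$ implies $cd \notin \mathcal{I}$. My strategy is to combine the two witnessing maps into a single pure map and then descend via Proposition \ref{basic properties of pure F-regularity}(a). By Remark \ref{technical remark on purity} I may choose a common exponent $e > 0$ for which both $\lambda_c^e$ and $\lambda_d^e$ are pure. I then form the composition
$$R \xrightarrow{\lambda_c^e} F_*^{e} R \xrightarrow{F_*^{e}(\lambda_d^e)} F_*^{2e} R.$$
The second arrow is pure as an $R$-module map by exactly the restriction-of-scalars argument used inside Remark \ref{technical remark on purity}, namely Lemma \ref{properties of pure maps}(d) applied to $\lambda_d^e$; the whole composite is then pure by Lemma \ref{properties of pure maps}(a). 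Unwinding definitions, the composite sends $r \mapsto r^{p^{2e}} c^{p^e} d$, so as an $R$-linear map it is precisely $\lambda^{2e}_{c^{p^e} d}$. Hence $R$ is F-pure along $c^{p^e} d$. Since $c^{p^e} d = c^{p^e-1}\cdot (cd)$, one more application of Proposition \ref{basic properties of pure F-regularity}(a) transports F-purity to the factor $cd$, so $cd \notin \mathcal{I}$.

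The final assertion is a formality given the first two: closure under addition combined with closure under $R$-multiplication makes $\mathcal{I}$ an ideal; either $\mathcal{I} = R$, or $1 \notin \mathcal{I}$ and the multiplicative closedness of $R \setminus \mathcal{I}$ established above is exactly the statement that the proper ideal $\mathcal{I}$ is prime.

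The main obstacle is the middle step. There is no direct way to manufacture a pure map $R \to F_*^{e'} R$ sending $1 \mapsto cd$ out of pure maps sending $1 \mapsto c$ and $1 \mapsto d$ without stacking Frobenius iterates, and one must be careful to verify that the restriction-of-scalars functor $F_*^e$ preserves purity of $R$-linear maps. This is an instance of Lemma \ref{properties of pure maps}(d), essentially the same trick recorded in Remark \ref{technical remark on purity}.
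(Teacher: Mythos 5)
Your argument is correct and is essentially the paper's own proof: both decompose the map sending $1 \mapsto c^{p^e}d$ (resp.\ $c^{p^f}d$) as $F_*^e(\lambda_d) \circ \lambda_c^e$, appeal to Lemma \ref{properties of pure maps}(d) and (a) for purity of the composite, and then transport F-purity from $c^{p^e}d$ to its factor $cd$ via Proposition \ref{basic properties of pure F-regularity}(a). The only cosmetic difference is that you invoke Remark \ref{technical remark on purity} to arrange a common exponent $e$, whereas the paper simply works with two possibly different exponents $e$ and $f$.
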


{}

\begin{proof}[Proof of Proposition \ref{when the set of non pure elements is prime}]
We first note that $\mathcal I$ is closed under multiplication by elements of $R$. Indeed, suppose that $c\in \mathcal I$ and $r \in R$. Then if $rc \notin \mathcal I$, we have that $R$ is F-pure along $rc$, but this implies $R$ is F-pure along $c$ by Proposition \ref{basic properties of pure F-regularity}(a), contrary to $c \in \mathcal I$.

We next show that the complement $ R \setminus \mathcal I$ is a multiplicatively closed set (if non-empty). To wit, take $c, d \notin \mathcal I$. Because  $R$ is F-pure along both $c$ and $d$, 
we have that there exist $e$ and $f$ such such the maps 
$$ R \overset{\lambda_c^e}\longrightarrow F_*^{e}R\,\,\,{\text{sending}} \,\,\, 1 \mapsto c, \,\,\,\,\,{\text{and}} \,\,\,  R \overset{\lambda_d^f}\longrightarrow F_*^{f}R
\,\,\,{\text{sending}}\,\,\, 1 \mapsto d$$
are both pure. Since purity is preserved by restriction of scalars (Lemma \ref{properties of pure maps}(d)), we also have that 
$$
F^e_*R \overset{F^e_*(\lambda_d^f)}\longrightarrow F^e_*F_*^{f}R  = F^{e+f}_*R
$$ is pure. Hence the composition 
$$R \overset{\lambda_c^e}\longrightarrow F_*^{e}R \overset{\lambda_d^f}\longrightarrow F^e_*F_*^{f}R \,\,\,\,{\text{sending}} \,\,\,\, 1 \mapsto c^{p^e}d$$
 is pure as well (Lemma \ref{properties of pure maps}(a)).  This means that $c^{p^e}d$ is not in $\mathcal I$, and since $\mathcal I$ is closed under multiplication, neither is $cd$. Note also that if $R \setminus \mathcal{I}$ is non-empty, then $1  \in R \setminus \mathcal I$ by Proposition \ref{basic properties of pure F-regularity}(a). Thus $R\setminus \mathcal I$ is a multiplicative set.
 
 Finally, if $\mathcal I$ is closed under addition (and $\mathcal I \neq R$), we conclude that $\mathcal I$ is a prime ideal since it is an ideal whose complement is a multiplicative set.
\end{proof}

{}
 
\begin{remark} If $R$ is a Noetherian local  domain, then  the set $\mathcal{I}$ of Proposition 
 \ref{when the set of non pure elements is prime} can be checked to be closed under addition (see, for example,  \cite[4.7]{Tuck1} for the F-finite case).  Likewise,  for valuation rings,  the set $\mathcal{I}$ is also an ideal: we construct it explicitly in the next section.
   However, for an arbitrary ring, 
 $\mathcal{I}$ can fail to be an ideal. For example, 
  under suitable  hypothesis, the set $\mathcal{I}$ is also the union of the centers of F-purity in the sense of Schwede, hence in this case, $\mathcal{I}$ is a finite union of ideals but not necessarily an ideal in the non-local case; see \cite{Schw1}. 
\end{remark}

{}

\subsection{F-pure regularity and Valuation Rings}
\label{pure F-regularity for valuations}

{}

  In this subsection we characterize valuation rings that are F-pure regular. The main result is:

\begin{theorem}
\label{purely F-regular valuation rings}
A valuation ring is F-pure regular if and only if it is Noetherian. Equivalently, a valuation ring is F-pure regular if and only if it is a field or a DVR.
 \end{theorem}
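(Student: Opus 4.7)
The forward direction is immediate: by standard structure theory, a Noetherian valuation ring is either a field or a DVR, hence a regular local ring, hence F-pure regular by Theorem \ref{Regular local rings are purely F-regular}. All the work lies in the converse, which I would prove by contrapositive.

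To show that a non-Noetherian valuation ring $V$ is not F-pure regular, my strategy is to produce a single nonzero element $c$ lying in $\bigcap_{e \geq 1} m^{[p^e]}$. Since $V$ is a domain, such a $c$ is automatically a non-zerodivisor, and purity of $\lambda_c^e : V \to F_*^e V$ can be ruled out for every $e$ by tensoring with the residue field: because the $V$-action on $F_*^e V$ is via Frobenius, one has $V/m \otimes_V F_*^e V \cong V/m^{[p^e]}$, and the induced map $V/m \to V/m^{[p^e]}$ sends $\bar{1} \mapsto \bar{c} = 0$, so it fails to be injective. Hence $V$ is not F-pure along $c$, and therefore not F-pure regular.

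The construction of $c$ is the substantive step and proceeds by cases on the rank of $\Gamma_v$, using that non-Noetherianness of $V$ is equivalent to $\Gamma_v$ being neither trivial nor isomorphic to $\mathbb{Z}$. If $\Gamma_v$ has rank $\geq 2$, I would choose a proper nontrivial convex subgroup $H \subsetneq \Gamma_v$, then pick $c \in V$ with $v(c) \in \Gamma_v^+ \setminus H$ and $x \in V$ with $v(x) \in H^+$; convexity forces $p^e v(x) \in H$ to lie strictly below $v(c)$ for every $e$, so $x^{p^e} \mid c$ and thus $c \in m^{[p^e]}$. If $\Gamma_v$ has rank $1$, then it embeds as an ordered subgroup of $\mathbb{R}$ and, being unequal to $\mathbb{Z}$, is dense in $\mathbb{R}$; fixing any nonzero $c \in m$, density supplies for each $e$ some $\gamma_e \in \Gamma_v$ with $0 < \gamma_e \leq v(c)/p^e$, and any $x_e$ with $v(x_e) = \gamma_e$ witnesses $c \in m^{[p^e]}$.

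The main obstacle I anticipate is the rank $1$ dense case, where a single $c$ works but the auxiliary element $x_e$ depends on $e$; here one has to invoke the classical fact that any non-cyclic subgroup of $\mathbb{R}$ is dense. The other fiddly point is setting up the tensor identification $V/m \otimes_V F_*^e V \cong V/m^{[p^e]}$ correctly, since the $V$-module structure on $F_*^e V$ is twisted by Frobenius. Once these are in hand, the rest is a direct case split on the structure of the value group.
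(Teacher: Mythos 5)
Your proof is correct and takes essentially the same approach as the paper, merely phrased as a contrapositive: the paper first proves $\dim V \leq 1$ by choosing $c$ in a nonzero non-maximal prime $P$ and $x \in m \setminus P$ (your convex subgroup $H$ is just $v(V \setminus P)$), and then handles rank one by showing the F-pure regular hypothesis forces $\Gamma$ to have a least positive element (equivalent to your density argument). The residue-field tensor computation showing $\lambda_c^e$ is non-pure when $c \in \bigcap_e m^{[p^e]}$ is exactly the "easy direction" of the paper's Theorem \ref{elements of a valuation ring that are pure}.
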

 
{}
 
  A key ingredient in the proof is the following theorem about the set of elements  along which $V$ fails to be F-pure  (C.f. Definition \ref{pure element and pure F-regularity}):

{}

\begin{theorem}\label{ideal}
\label{elements of a valuation ring that are pure} 
The set of elements $c$ along which a valuation ring $(V, m)$ fails to be F-pure is the prime ideal
$$\mathcal{Q} := \bigcap_{e > 0} m^{[p^e]}.$$
\end{theorem}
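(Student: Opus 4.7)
The plan is to identify $\mathcal{Q}$ with the set $\mathcal{I}$ of elements along which $V$ fails to be F-pure, and then invoke Proposition \ref{when the set of non pure elements is prime} to conclude $\mathcal{I} = \mathcal{Q}$ is a prime ideal. The crux is the claim: for every $e > 0$ and $c \in V$, the map $\lambda_c^e : V \to F_*^e V$ is pure if and only if $c \notin m^{[p^e]}$. Granting this, $V$ is F-pure along $c$ (meaning some $\lambda_c^e$ is pure) if and only if $c \notin m^{[p^e]}$ for some $e > 0$, which, because $\{m^{[p^e]}\}_{e \geq 1}$ is a descending chain of ideals, is equivalent to $c \notin \mathcal{Q}$.

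The forward direction of the claim is immediate: tensoring $\lambda_c^e$ with $V/m$ produces an injection $V/m \to V/m^{[p^e]}$ that sends $\bar 1 \mapsto \bar c$, forcing $\bar c \neq 0$, i.e.\ $c \notin m^{[p^e]}$.

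For the converse, assume $c \notin m^{[p^e]}$. Using the flatness of $F_*^e V$ (Theorem \ref{flatness and faith flatness of Frobenius for Valuation rings}) together with the structure of $V$-modules, I would reduce purity of $\lambda_c^e$ to injectivity of $\lambda_c^e \otimes V/(a)$ for every $a \in V$: first, since any $V$-module is a filtered colimit of finitely generated submodules, one reduces to the finitely generated case; second, an induction on the number of generators, using a four-lemma diagram chase across short exact sequences $0 \to V x_k \to N \to N/V x_k \to 0$ (which remain exact after tensoring with the flat module $F_*^e V$), reduces to cyclic modules $V/I$; finally, since every ideal of $V$ is a directed union of its principal sub-ideals, a further filtered-colimit argument reduces to the principal case. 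Now if $\lambda_c^e \otimes V/(a)$ failed to be injective for some $a \in V$, there would exist $r \in V$ with $cr \in (a^{p^e})$ but $r \notin (a)$, so $v(c) + v(r) \geq p^e v(a)$ while $v(r) < v(a)$. Setting $z := a/r$, which lies in $m$ since $v(z) = v(a) - v(r) > 0$, I would compute
$$v(c) \;\geq\; p^e v(a) - v(r) \;=\; p^e v(z) + (p^e - 1) v(r) \;\geq\; p^e v(z),$$
forcing $c \in (z^{p^e}) \subseteq m^{[p^e]}$, a contradiction.

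This establishes $\mathcal{I} = \mathcal{Q}$. As an intersection of ideals, $\mathcal{Q}$ is closed under addition, and it is proper because $m^{[p^e]} \subseteq m$ gives $1 \notin \mathcal{Q}$. Proposition \ref{when the set of non pure elements is prime} then identifies $\mathcal{I} = \mathcal{Q}$ as a prime ideal. The main obstacle I anticipate is the reduction to principal ideals in the converse direction: outside the Noetherian setting one cannot appeal to injective hulls as in Lemma \ref{properties of pure maps}(h), so care is required to exploit the flatness of $F_*^e V$ and the directed-union structure of ideals in $V$ to replace arbitrary tensor products by the principal cyclic case.
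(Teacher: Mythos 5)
Your proof is correct, and it takes a genuinely different route from the paper. For the forward direction (if $\lambda_c^e$ is pure then $c \notin m^{[p^e]}$) the two arguments coincide: tensor with $V/m$ and note the map kills $\bar 1$ when $c \in m^{[p^e]}$. The converse is where they diverge. The paper decomposes the \emph{target} module $F_*^e V$ as the directed union (over the system $\Sigma$ with least element $Vc$) of its finitely generated submodules $T$ containing $c$; each such $T$ is free by Lemma \ref{finitely generated torsion free modules over valuation rings}, the hypothesis $c \notin m^{[p^e]} = mF_*^eV$ gives $c \notin mT$, Nakayama makes $c$ part of a free basis, so $V \to T$ actually \emph{splits}, and Lemma \ref{properties of pure maps}(g) passes purity to the colimit. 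You instead decompose the arbitrary \emph{test} module $Q$: a filtered-colimit reduction to finitely generated $Q$, a snake-lemma induction on generators (using flatness of $F_*^e V$ to keep both rows exact) down to cyclic $V/I$, a second filtered-colimit reduction from $V/I$ to principal $V/(a)$, and then an explicit valuation computation showing that failure of injectivity on $V/(a)$ would force $c \in (z^{p^e}) \subseteq m^{[p^e]}$ for some $z \in m$. Your route avoids the freeness structure theorem and Nakayama entirely, at the cost of more diagram-chasing and the three-stage reduction; the paper's route is shorter and yields the stronger fact that each $V \to T$ splits, but leans on the torsion-free structure theorem. Both correctly avoid the injective-hull criterion of Lemma \ref{properties of pure maps}(h), which is unavailable here since $V$ is not assumed Noetherian. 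One small point worth making explicit in your write-up: the case $a = 0$ (so $V/(a) = V$) needs a separate word --- there $\lambda_c^e$ is injective simply because $V$ is a domain and $c \neq 0$, the latter following from $c \notin m^{[p^e]}$ since $0 \in m^{[p^e]}$.
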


\begin{proof}
First, take any $c \in \mathcal{Q}$. We need to show that $V$ is not F-pure along $c$, that is, that the map
$$\lambda_c^e: V \rightarrow F_*^{e}V \,\,\,\,\,\,{\text{sending}}\,\,\, 1 \mapsto c$$
is not pure for any $e$. Because $c \in m^{[p^e]}$, we see that tensoring with 
$\kappa := V/m$ produces the zero map. So $\lambda_c^e$ is not  pure for any $e$, which means $V$ is not F-pure along $c$.

{}

  For the other inclusion, let $c \notin m^{[p^e]}$ for some $e > 0$. We claim that $\lambda_c^e: V \rightarrow F_*^{e}V$ is pure. Apply Lemma \ref{properties of pure maps}(g) to the set $\Sigma$ of finitely generated submodules of $F_*^eV$ which contain $c$. Note $\Sigma$ is a directed set under inclusion with a least element, namely the $V$-submodule of $F_*^eV$ generated by $c$, and $F_*^eV$ is the direct limit of the elements of $\Sigma$. It suffices to show that if $T \in \Sigma$, then 
$$\lambda_T: V \rightarrow T \,\,\,\,{\text{sending}}\,\,\,\, 1 \mapsto c$$
is pure. But $T$ is free since it is a finitely generated, torsion-free module over a valuation ring (Lemma \ref{finitely generated torsion free modules over valuation rings}). Since $c \notin m^{[p^e]}$, by the $V$ module structure on $T$, we get $c \notin mT$. By Nakayama's Lemma, we know $c$ is  part of a free basis for $T$. So $\lambda_T$ splits, and is pure in particular. 

{}

  Now that we know that the set of elements along which $R$ is not F-pure is an ideal, it follows that it is a  prime ideal from Proposition \ref{when the set of non pure elements is prime}. 
\end{proof}

{}

\begin{corollary}
\label{necessary and sufficient for purely F-regular} For a valuation ring  $(V, m) $ of characteristic $p$,  define  $\mathcal{Q} := \bigcap_{e > 0} m^{[p^e]}$.
Then the quotient $V/\mathcal{Q}$ is a F-pure regular valuation ring. Furthermore,  $V$ is  F-pure regular if and only if $\mathcal Q$ is zero.
\end{corollary}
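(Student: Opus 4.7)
The plan is to leverage Theorem \ref{ideal} as the essential input, and deduce both claims from it almost mechanically. First I would recall the standard fact that a quotient of a valuation ring $V$ by a prime ideal is again a valuation ring (of the residue field of the corresponding localization); thus $\overline{V} := V/\mathcal{Q}$ is a valuation ring, with maximal ideal $\overline{m} := m/\mathcal{Q}$, and $\overline{V}$ is in particular a domain.

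The key computation is to verify that $\bigcap_{e>0} \overline{m}^{[p^e]} = 0$ in $\overline{V}$. Suppose $\overline{v}$ lies in this intersection, so that for every $e > 0$ we have $v \in m^{[p^e]} + \mathcal{Q}$. Since $\mathcal{Q} = \bigcap_{e>0} m^{[p^e]}$, we have $\mathcal{Q} \subseteq m^{[p^e]}$ for every $e$, hence $m^{[p^e]} + \mathcal{Q} = m^{[p^e]}$. Therefore $v \in m^{[p^e]}$ for every $e$, which means $v \in \mathcal{Q}$ and $\overline{v} = 0$. Now applying Theorem \ref{ideal} to the valuation ring $\overline{V}$, the set of elements along which $\overline{V}$ fails to be F-pure is $\bigcap_{e>0} \overline{m}^{[p^e]} = 0$. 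Because $\overline{V}$ is a domain, its non-zerodivisors are exactly its nonzero elements, so $\overline{V}$ is F-pure along every non-zerodivisor; that is, $\overline{V}$ is F-pure regular.

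For the second assertion, observe that $V$ is itself a valuation domain, so its non-zerodivisors are precisely its nonzero elements. By Theorem \ref{ideal}, $V$ is F-pure along $c$ if and only if $c \notin \mathcal{Q}$. Hence $V$ is F-pure regular, meaning F-pure along every nonzero $c \in V$, if and only if $\mathcal{Q}$ contains no nonzero element, i.e.\ $\mathcal{Q} = 0$.

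The only step requiring any care is the computation $\bigcap_{e>0} \overline{m}^{[p^e]} = 0$, and this turns out to be a one-line consequence of the containment $\mathcal{Q} \subseteq m^{[p^e]}$ built into the definition of $\mathcal{Q}$. Everything else is a direct invocation of Theorem \ref{ideal}.
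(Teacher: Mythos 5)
Your proof is correct and follows the same route as the paper: invoke Theorem \ref{ideal}, note that $V/\mathcal{Q}$ is again a valuation ring since $\mathcal{Q}$ is prime, and verify $\bigcap_{e>0}\overline{m}^{[p^e]}=0$ using $\mathcal{Q}\subseteq m^{[p^e]}$. The only difference is that you spell out this last verification explicitly, whereas the paper states it without elaboration.
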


\begin{proof}
The second statement follows immediately from Theorem \ref{elements of a valuation ring that are pure}. For the first, observe that $V/\mathcal{Q}$ is a domain since $\mathcal Q$ is prime. So ideals of $V/\mathcal{Q}$ inherit the total ordering under inclusion from $V$, and $V/\mathcal{Q}$ is a valuation ring whose maximal  $\overline m$ ideal satisfies
$ \bigcap_{e > 0} {\overline m}^{[p^e]} = 0.$ So $V/\mathcal{Q}$ is F-pure regular.
\end{proof}

{}\begin{corollary}\label{Linquan} For a valuation ring,  F-pure regularity is equivalent to all ideals (equivalently, the maximal ideal) being tightly closed. 
\end{corollary}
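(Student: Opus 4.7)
The plan is to derive the corollary by bouncing between Corollary \ref{necessary and sufficient for purely F-regular}, which says a valuation ring $(V,m)$ is $F$-pure regular if and only if the ideal $\mathcal{Q} := \bigcap_{e > 0} m^{[p^e]}$ vanishes, and Proposition \ref{Hochster=strongF-regular}, which links $F$-pure regularity to tight closure. I will arrange the three implications as a cycle: $F$-pure regular $\Rightarrow$ every ideal is tightly closed $\Rightarrow$ the maximal ideal is tightly closed $\Rightarrow$ $F$-pure regular.

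The first implication is immediate from Proposition \ref{Hochster=strongF-regular}, specialized to the pair $I \subset V$; the second is trivial. The main content lies in the third implication, which I plan to prove by contrapositive. Assume $V$ is not $F$-pure regular. By Corollary \ref{necessary and sufficient for purely F-regular}, $\mathcal{Q} \neq 0$, so I may pick a nonzero $c \in \mathcal{Q}$. Because $V$ is a domain, $c$ avoids the unique minimal prime $(0)$, so it is an admissible witness for tight closure. By the very definition of $\mathcal{Q}$, we have $c \cdot 1^{p^e} = c \in m^{[p^e]}$ for every $e > 0$, which under the tight closure formalism of Definition \ref{HochsterF-regular} is exactly the statement that $1$ lies in $m^*$. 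Since plainly $1 \notin m$, the maximal ideal fails to be tightly closed, contradicting the hypothesis.

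I do not foresee any serious obstacle here: the structural work has already been carried out in Theorem \ref{ideal} and its corollary, and the argument is essentially a direct translation between membership in $\mathcal{Q}$ and membership in $m^*$. The only small point to verify along the way is that Hochster's module-theoretic definition of tight closure, applied to the ideal pair $I \subset R$, reduces to the familiar numerical condition $c x^{p^e} \in I^{[p^e]}$ for all large $e$; this identification is standard and was used implicitly in the proof of Proposition \ref{Hochster=strongF-regular}.
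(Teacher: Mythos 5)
Your proposal is correct and takes essentially the same route as the paper: the forward implication is read off Proposition \ref{Hochster=strongF-regular}, and the converse is established by observing that a nonzero $c \in \mathcal{Q} = \bigcap_e m^{[p^e]}$ witnesses $1 \in m^*$, so $m$ tightly closed forces $\mathcal{Q}=0$ and hence F-pure regularity via Corollary \ref{necessary and sufficient for purely F-regular}. Phrasing the converse as a contrapositive is a cosmetic difference only.
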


\begin{proof}  Proposition \ref{Hochster=strongF-regular} ensures that F-pure regularity implies all ideals are tightly closed. For the converse, note that if 
 there is some non-zero $c $ in $ \bigcap_{e>0} m^{[p^e]}$, then  $1\in m^*$. So for any proper ideal $m$, the condition that $m^*=m$ implies that  $ \bigcap_{e>0} m^{[p^e]} = 0$.
 In particular, if the maximal ideal of a valuation ring $V$ is tightly closed, then Corollary \ref{necessary and sufficient for purely F-regular} implies that $V$ is F-pure regular.
\end{proof}

\begin{proof}[Proof of Theorem \ref{purely F-regular valuation rings}]
First observe that if $V$ is a field or DVR, then it is F-pure regular. Indeed, every map of modules over a field is pure (since all vector space maps split). And a DVR is a one dimensional regular local ring, so it is F-pure regular by Theorem \ref{Regular local rings are purely F-regular}.

{}

 Conversely, we show that if $(V, m)$ is F-pure regular, its dimension is at most one. Suppose  $(V,m)$ admits a non-zero prime ideal $P \neq m$. Choose $x \in m \setminus P$, and  a non-zero element $c \in P.$  
The element $c$ {\it cannot}  divide  $x^n$ in $V$,  since  in that case  we would have $x^n \subset (c) \subset P$,  but $P$ is a prime ideal not containing $x$.
It follows from the definition of  a valuation ring, then, that $x^n$ divides $c$ for all $n$. This  means in particular that $c \in (x)^{[p^e]} \subset m^{[p^e]}$ for all $e$.  So $c \in \mathcal{Q}.$ According to Theorem \ref{elements of a valuation ring that are pure}, $R$ is not F-pure regular. 

{}

  It remains to show that an  F-pure regular valuation ring $V$ of dimension one is discrete.  Recall that the value group $\Gamma$ of  $V$ is (order isomorphic to) an additive subgroup of $\mathbb{R}$  \cite[Theorem 10.7]{Mat}.

{}

  We claim that   $\Gamma$ has a least positive element. To see this, let  $\eta$ be the greatest lower bound of all positive elements in $\Gamma$.  First observe that $\eta$ is strictly positive. Indeed, for 
fixed $c\in m$,  the sequence $\frac{v(c)}{p^e}$ consists of positive real numbers approaching zero as $e$ gets large. If $\Gamma$ contains elements of arbitrarily small positive values, then we could find  $x \in V$ such that 
$$0 < v(x) < \frac{v(c)}{p^e}.$$  But then 
 $0 < v(x^{p^e}) < v(c)$, which says that $c \in (x)^{[p^e]} \subset m^{[p^e]}$ for all $e$. This contradicts our assumption that  $V$ is  F-pure along $c$ 
(again, using Theorem \ref{elements of a valuation ring that are pure}).

{}

  Now that we know the greatest lower bound $\eta$ of $\Gamma$ is positive, it remains to show that   $\eta \in \Gamma$. Choose  $\epsilon$ such that $0 < \epsilon < \eta.$
If $\eta \notin \Gamma$, we know $\eta < v(y)$ for all $y \in m$. Since $\eta$ is the greatest lower bound, we can find  $y$ such that 
$$\eta < v(y) < \eta + \epsilon, $$ as well as 
 $x$ such that 
$$\eta < v(x) < v(y) < \eta + \epsilon.$$ 
Then 
$$0 < v(y/x) < \epsilon < \eta,$$
contradicting the fact that $\eta$ is a lower bound for $\Gamma$.  We conclude that  $\eta \in \Gamma$, and  that $\Gamma$ has a least positive element.

{}

   It is now easy to see using the Archimedean axiom for real numbers that the ordered subgroup $\Gamma$ of $\mathbb R$ is generated by its least positive element $\eta$.  In particular, $\Gamma$ is order isomorphic to $\mathbb{Z}$. We conclude that   $V$ is a DVR. 
\end{proof}

{}

\begin{remark}
\label{uniform F-compatiblity}
For a valuation ring $(V, m)$ of dimension $n \geq 1$, our results show that in general
$$\mathcal{Q} = \bigcap_{e \in \mathbb{N}} m^{[p^e]}$$
is a prime ideal of height at least $n-1$. It is easy to see that the situation where  $V/\mathcal{Q}$ is a DVR arises if and only if $m$ is principal, which in turn is equivalent to the  value group $\Gamma$ having a least positive element. For example, this is the case for the lex valuation in Example \ref{lex}.
It is not hard to check that $\mathcal Q$ is  uniformly F-compatible ideal in the sense of Schwede \cite{Schw1} (see also \cite[3.1]{Smith-Zhang} for further discussion of uniformly F-compatible ideals), generalizing of course to the non-Noetherian and non-F-finite setting. 
A general investigation of uniformly F-compatible ideals appears to be  fruitful, and is being undertaken by the first author. 
\end{remark}

{}

\subsection{Split F-regularity}
\label{split F-regularity for valuations}
  Of course, there is another obvious way{\footnote{This generalization is used for cluster algebras in  \cite{BeMuRaSm} for example.}} to adapt Hochster and Huneke's definition of strongly F-regular to arbitrary rings of prime characteristic $p$:

\begin{definition}
\label{split F-regularity}
A ring $R$ is \textbf{split F-regular} if for all non-zero divisors $c$, there exists $e$ such that the map $R \rightarrow F^e_*R$  sending $1$ to $c$ splits as a map of $R$-modules.
\end{definition}

{}

  Since split maps are pure,  a split F-regular ring is F-pure regular. Split F-regular rings are also clearly Frobenius split.  On the other hand, Example \ref{A discrete valuation ring which is not F-finite} shows that a discrete valuation ring need not be Frobenius split, so split F-regularity is strictly stronger than F-pure regularity.  In particular, not every regular local ring is split F-regular, so split F-regularity should not really be considered a class of "singularities" even for Noetherian rings.

{}

\begin{remark}
In Noetherian rings, split F-regularity is very close to F-pure regularity. For example, if  $R$ is  an F-pure regular Noetherian domain whose fraction field is F-finite, then
 the only obstruction to split F-regularity is the splitting of Frobenius.
 This is a consequence of Theorem \ref{F-finiteFSplit}, which tells us $R$ is F-finite if it is Frobenius split,  and Lemma \ref{splitvspure}, which tells us F-split and F-pure are the same in F-finite Noetherian rings.
\end{remark}

\begin{corollary}\label{splitDVR}
For a discrete valuation ring  $V$ whose fraction field is F-finite, the following are equivalent:
\begin{enumerate}
\item[(i)] $V$ is split F-regular;
\item[(ii)] $V$ is  Frobenius split;
\item[(iii)] $V$ is F-finite;
\item[(iv)] $V$ is free over $V^p$;
\item[(v)] $V$ is excellent.
\end{enumerate}
Moreover, if $K$ is a function field over an F-finite ground field $k$, and $V$ is a valuation of $K/k$, then (i)-(v) are equivalent to $V$ being a divisorial valuation ring.
\end{corollary}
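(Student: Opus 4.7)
The plan is to assemble the corollary from pieces already proved in the paper, with the only real new content being the equivalence of split F-regularity with the others. First I would observe that (ii), (iii), and (v) are equivalent by Corollary \ref{DVRFsplitChar} (which was already proved for DVRs with F-finite fraction field), and that (iii) $\Leftrightarrow$ (iv) is a direct consequence of Theorem \ref{F-finite iff free} applied to the valuation ring $V$ inside the F-finite field $K$. This disposes of (ii)--(v) for free.

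Next I would handle (i). The implication (i) $\Rightarrow$ (ii) is immediate: split F-regularity demands that $R \to F^e_*R$ sending $1 \mapsto c$ split for every non-zerodivisor $c$ and some $e$, and taking $c = 1$ yields a Frobenius splitting for some $e \geq 1$, which is enough since a DVR is reduced. For (ii) $\Rightarrow$ (i), I would chain together three facts: (ii) implies (iii), so $V$ is F-finite; a DVR is a regular local ring, hence F-pure regular by Theorem \ref{Regular local rings are purely F-regular}; and in the F-finite Noetherian setting, pure maps whose target is $F^e_*V$ automatically split by Lemma \ref{splitvspure}, since $F^e_*V$ is a finitely generated (hence finitely presented, as $V$ is Noetherian) $V$-module, making its cokernel finitely presented as well. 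Thus for every non-zero $c \in V$ one obtains $e$ with $\lambda^e_c$ pure, and F-finiteness upgrades this to a splitting.

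For the final clause about function fields, I would invoke Theorem \ref{F-finiteness equivalent to Abhyankar} to identify (iii) with the condition that $v$ is Abhyankar, and then use Example \ref{AbEx}: a discrete valuation (rational rank one) on a function field is Abhyankar precisely when it is divisorial. This closes the circle of equivalences with the geometric condition.

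The argument is essentially a bookkeeping exercise, so I do not expect any serious obstacle; the only subtle point is the implication (ii) $\Rightarrow$ (i), which requires combining F-pure regularity of the DVR (via regularity) with F-finiteness (via Theorem \ref{Noeth}) to promote purity of the twisted Frobenius map to a splitting. Everything else is extraction from earlier statements.
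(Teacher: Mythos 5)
Your proof is correct and takes essentially the same route as the paper's, which is also an assembly of earlier results; you simply spell out the logic in more detail than the paper does. The decomposition into (ii)$\Leftrightarrow$(iii)$\Leftrightarrow$(v) via Corollary \ref{DVRFsplitChar}, (iii)$\Leftrightarrow$(iv) via Theorem \ref{F-finite iff free}, and (ii)$\Rightarrow$(i) via regularity (Theorem \ref{Regular local rings are purely F-regular}) combined with Lemma \ref{splitvspure} in the Noetherian F-finite setting, is exactly what the paper is implicitly invoking when it writes ``a DVR is a regular local ring, so it is always F-pure regular and hence split F-regular if it is F-finite.'' The handling of the final clause via Theorem \ref{F-finiteness equivalent to Abhyankar} and the rational-rank-one case of Example \ref{AbEx} also matches. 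One small quibble: in your (i)$\Rightarrow$(ii) step, the phrase ``which is enough since a DVR is reduced'' is not the right justification. The reason a splitting of $F^e$ (for some $e\geq 1$) yields a splitting of $F$ is simply that $F^e$ factors as $R \xrightarrow{F} F_*R \xrightarrow{F_*(F^{e-1})} F^e_*R$, so any left inverse of $F^e$ restricts to a left inverse of $F$; reducedness plays no role. This does not affect the validity of the argument.
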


\begin{proof}
All this has been proved already. Recall that a DVR is a regular local ring, so it is always F-pure regular and hence split F-regular if it is F-finite. Also, the final statement follows from Theorem \ref{F-finiteness equivalent to Abhyankar} because an Abyhankar valuation of rational rank one is necessarily divisorial, and a divisorial valuation of a functional field over an F-finite field is necessarily $F$-finite. 
\end{proof}

To summarize: A valuation ring is F-pure regular if and only if it is Noetherian,  and split F-regular (under the additional assumption that its fraction field is F-finite) if and only if it is excellent.
{}

{}

\section{Concluding Remarks}
  We have argued that for valuation rings, F-purity and  F-pure regularity (a version of strong F-regularity defined using pure maps instead of split maps)  are natural and robust 
 properties. We have also seen that the conditions of Frobenius splitting and split F-regularity are  more subtle, and that even regular rings can fail to satisfy these.
 
For Noetherian valuation rings in F-finite fields, we have seen that the Frobenius splitting property is equivalent to F-finiteness and also to excellence, but we do not know what happens in the non-Noetherian case: does there exist an example of a (necessarily non-Noetherian) Frobenius split valuation ring of an F-finite field that is \textit{not} F-finite? {By Corollary \ref{F-finiteness implies finitely generated value group}, a possible strategy could be to construct a Frobenius split valuation ring in a function field whose value group is infinitely generated.} For example, can one construct an F-split valuation in $\mathbb F_p(x, y)$ with value group $\mathbb Q$?
{On the other hand, perhaps Frobenius splitting is equivalent to F-finiteness (just as in the Noetherian case). One might then ask whether a generalized version of Theorem \ref{F-finite iff free} holds for arbitrary fields: is a valuation ring Frobenius split if and only if Frobenius is free?} 

   {We propose that F-pure regularity is a  more natural generalization of strong F-regularity to the non-F-finite case than  a suggested  generalization  of strong F-regularity using tight closure due to Hochster. We have seen that F-pure regularity implies Hochster's notion, and that they are equivalent for local Noetherian rings.     However,  we do not know whether F-pure regularity is a local notion: if $R_m$ is  F-pure regular for all maximal ideals, does it follow that $R$ is F-pure regular?   We expect this to be true but  the standard arguments are insufficient to prove it.    (In the Noetherian F-finite case, this is well known; C.f.  \cite[Theorem 5.5(a)]{HH4}. Furthermore, the answer is affirmative for excellent rings with F-finite total quotient rings by Proposition \ref{F-finiteExcellent}.)  If true, then F-pure regularity would be equivalent to all modules being tightly closed in the Noetherian case. More generally, 
might F-pure regularity be equivalent to the property that all modules are tightly closed even in the non-Noetherian case? Or even that all ideals are tightly closed? An affirmative answer to this last question would imply that strong and weak  F-regularity are equivalent.}
{}


\bibliographystyle{plain}
\bibliography{Bibliography}

\end{document}